\begin{document}

\numberwithin{equation}{section}

{\theoremstyle{definition}\newtheorem{definition}{Definition}[section]
\newtheorem{notation}[definition]{Notation}
\newtheorem{remnot}[definition]{Remarks and notation}
\newtheorem{terminology}[definition]{Terminology}
\newtheorem{remark}[definition]{Remark}
\newtheorem{remarks}[definition]{Remarks}
\newtheorem{example}[definition]{Example}
\newtheorem{examples}[definition]{Examples}
\newtheorem{deflem}[definition]{Definition-Lemma}
\newtheorem{proposition}[definition]{Proposition}
\newtheorem{lemma}[definition]{Lemma}
\newtheorem{theorem}[definition]{Theorem}
\newtheorem{corollary}[definition]{Corollary}

\newcommand{\ci}{C^{\infty}}
\newcommand{\A}{\mathscr{A}}
\newcommand{\Cat}{\mathscr{C}}
\newcommand{\Dnc}{\mathscr{D}}
\newcommand{\E}{\mathscr{E}}
\newcommand{\F}{\mathscr{F}}
\newcommand{\gr}{\mathscr{G}}
\newcommand{\go}{\mathscr{G} ^{(0)}}
\newcommand{\hr}{\mathscr{H}}

\newcommand{\cH}{\mathcal{H}}
\newcommand{\cU}{\mathcal{U}}

   \newcommand{\sta}{\stackrel}
   \newcommand{\ba}{\begin{eqnarray}}
   \newcommand{\na}{\end{eqnarray}}
   \newcommand{\ban}{\begin{eqnarray*}}
   \newcommand{\nan}{\end{eqnarray*}}

\newcommand{\ho}{\mathscr{H} ^{(0)}}
\newcommand{\rgr}{\mathscr{R}}
\newcommand{\rgo}{\mathscr{R} ^{(0)}}
\newcommand{\lr}{\mathscr{L}}
\newcommand{\lo}{\mathscr{} ^{(0)}}
\newcommand{\gd}{\mathscr{G}^{\mathbb{R}^2}}
\newcommand{\gt}{\mathscr{G} ^{T}}
\newcommand{\I}{\mathscr{I}}
\newcommand{\Nb}{\mathscr{N}}
\newcommand{\Kom}{\mathscr{K}}
\newcommand{\ops}{\mathscr{O}}
\newcommand{\Pb}{\mathscr{P}}
\newcommand{\sw}{\mathscr{S}}
\newcommand{\Uo}{\mathscr{U}}
\newcommand{\Vo}{\mathscr{V}}
\newcommand{\Wo}{\mathscr{W}}
\newcommand{\R}{\mathbb{R}}
\newcommand{\Nat}{\mathbb{N}}
\newcommand{\C}{\mathbb{C}}
\newcommand{\src}{\mathscr{S}_{c}}
\newcommand{\cg}{C_{c}^{\infty}(\gr)}
\newcommand{\cgo}{C_{c}^{\infty}(\go)}
\newcommand{\ct}{C_{c}^{\infty}(\gr^T)}
\newcommand{\ca}{C_{c}^{\infty}(A\gr)}
\newcommand{\Un}{{U}^{(n)}}
\newcommand{\Du}{D_{\mathscr{U}}}

\def\to{\longrightarrow}

\def\L{\mathop{\wedge}}

\def\gpd{\,\lower1pt\hbox{$\longrightarrow$}\hskip-.24in\raise2pt
             \hbox{$\longrightarrow$}\,}

\title[PDO calculus, twisted gerbes and twisted index theory]
{Pseudodifferential calculus, twisted gerbes and twisted index theory for Lie groupoids}

 \author[Paulo Carrillo Rouse  ]{Paulo Carrillo Rouse }
  \address[Paulo Carrillo Rouse  ]
  {Institut de Math\'ematiques de Toulouse\\ 118, route de Narbonne\\
  31062 Toulouse, France}
  \email{carrillo@math.univ-toulouse.fr}


   \thanks{
 }
  \subjclass[2000]{58J22, 19K35, 47G30}
 \keywords{Pseudodifferential calculus, Lie groupoids, twisted gerbes, twisted index theory, twisted K-theory.}

\bigskip
\everymath={\displaystyle}

\begin{abstract}
\noindent The goal of this paper is to construct a calculus whose higher indices are naturally elements in the twisted K-theory groups for Lie groupoids. 
Given a Lie groupoid $G$ and a $PU(H)$-valued groupoid cocycle, we construct an algebra of projective pseudodifferential operators. The subalgebra of regularizing operators identifies with the naturally associated smooth convolution algebra of the associated twisted gerbe. We develop the associated symbolic calculus, symbol short exact sequences and existence of parametrices. In particular the algebra of projective operators appears as a quantization of the twisted symbol algebra. As the (untwisted) Lie groupoid case that it encompasses, the negative order operators extend to the twisted $C^*$-algebra and the zero order operators act as bounded multipliers on it. 
We obtain an analytic index morphism in twisted K-theory associated in a classic way by the corresponding pseudodifferential extension. We prove that this index morphism only depends on the isomorphism class of the cocycle, {\it i.e.,} on the twisting as the associated class in $H^1(G;PU(H))$. We also show that this twisted analytic index morphism is compatible with the index we constructed in a previous work, in collaboration with Bai-Ling Wang, by means of the Connes tangent groupoid, obtaining as a consequence the analytic interpretation, in terms of projective pseudodifferential operators and ellipticity, of the twisted longitudinal Connes-Skandalis index theorem.
Our construction encompasses and unifies several previous cases treated in the literature, we discuss in the final section some examples of classes of operators unified by our setting.


\end{abstract}

\maketitle

\tableofcontents

\section{Introduction}


In a series of papers, \cite{MMSI,MMSII}, Mathai, Melrose and Singer extended the classic Atiyah-Singer index theory to the realm of projective operators for the case of families\footnote{The case of fractional indices' operators is slighty different, we discuss it with detail in the last section.}. Even if some examples of projective operators appeared before in the literature it was in their papers that the notions of projective vector bundles and projective pseudodifferential operators with respect to some Azumaya bundle were formalized at least in the cases were this Azumaya bundle is obtained from a torsion class. Our motivation for study projective operators comes from twisted index theory and twisted K-theory. In the same way that "classic" index theory is indissociable from "classic" topological K-theory, Mathai, Melrose and Singer's fundamental tools and techniques relied on twisted K-theory. The subject of Twisted K-theory has been growing very fast in the last years due to its deep relations with several domains in mathematics and mathematical physics.

Now, twisted K-theory can be defined for higher structures such as Lie groupoids, which cover very diverse and different geometric situations: Groups (Lie and discrete), families, orbifolds, foliations, coverings, singular manifolds for mention some of them. In fact in \cite{TXL} Tu, Xu and Laurent-Gengoux developped all the properties of the twisted K-theory for differentiable stacks mainly in terms of the $K-$theory of the associated twisted $C^*$-algebra but also doing the link with more topological models whenever is possible (for proper groupoids and for torsion twistings). 

In a series of papers, in collaboration with Bai-Ling Wang \cite{CaWangCRAS,CaWangAdv,CaWangAENS} we have been studying index theory for twisted Lie groupoids, mainly following the approach inspired by the Connes' tangent groupoid.

In this paper we develop the pseudodifferential calculus that corresponds to the twisted K-theory for Lie groupoids, showing how to extend the classic notions to the twisted world and showing in particular how twisted K-theory is the natural receptacle of the higher indices of elliptic projective pseudodifferential operators. A proper construction and main properties of this calculus were missing, even in the case of famililes our results complete and explain some of the classic properties. We explain next in more detail the contents of the present article.

Let $\gr\rightrightarrows M$ be a Lie groupoid with compact smooth base $M$. Assume there is a twisting $\alpha$ on $\gr$, that is, the isomorphism class of a generalized morphism $\alpha:\gr--->PU(H)$ or equivalently the isomorphism class of a $PU(H)$-principal bundle over $M$ with a compatible $\gr$-action. The twisting $\alpha$ can be hence represented by a groupoid morphism (a cocycle)
$$\alpha_\Omega:\gr_\Omega\to PU(H)$$
associated to an good\footnote{In practice we will always assume in this paper that the open covers of the base are locally finite and we could even assume the open subsets $\Omega_i$ are manifold charts.} open cover $\Omega$ of $M$ and where $\gr_\Omega$ stands for the naturally associated Cech groupoid. In practice is often common to start with a cocycle to get afterwards the associated class.

To better explain the construction of the algebra of Pseudodifferential operators is worthy to recall how the twisted convolution algebra $C_c^\infty(\gr,\alpha_\Omega)$ is defined and how the product is given. Denote by $\gr_i^j:=s^{-1}(\Omega_j)\cap t^{-1}(\Omega_i)$ where $s,t$ are the source and target maps for the groupoid $\gr$ and $\Omega:=\{\Omega_i\}_i$ is the open cover of the base. The morphism $\alpha_\Omega$ gives in a canonical way a line bundle $L_{ij}$ over each $\gr_i^j$, the union $L:=\sqcup_{(i,j)}L_{ij}\to \gr_\Omega$ is a Fell bundle in the sense that we have  
isomorphisms
$$L_{ij}^g\times L_{jk}^h\stackrel{\bullet_F}{\longrightarrow} L_{ik}^{hg}$$
whenever $\gr_i^j \,_{t_j}\times_{s_j}\gr_j^k$ not empty and $(g,h)\in \gr_i^j \,_{t_j}\times_{s_j}\gr_j^k$. The algebra $C_c^\infty(\gr,\alpha_\Omega)$ is by definition the compactly supported $C^\infty$-sections $$C_c^\infty(\gr_\Omega,L\otimes \Omega^{\frac{1}{2}})$$
where $\Omega^{\frac{1}{2}}|_{\gr_i^j}:=\Omega^{\frac{1}{2}}(Ker\,ds_j\oplus Ker\, dt_i),$ is a bundle of half densities, and with product 
$$C_c^\infty(\gr_i^j,L_{ij}\otimes \Omega^{\frac{1}{2}})\times C_c^\infty(\gr_j^k,L_{jk}\otimes \Omega^{\frac{1}{2}}) \to C_c^\infty(\gr_i^k,L_{ik}\otimes \Omega^{\frac{1}{2}})$$ defined by
$$(f*g)(\gamma):=\int_{(\gamma_1,\gamma_2)\in \gr_\Omega:\gamma_1\cdot \gamma_2=\gamma} f(\gamma_1)\bullet_Fg(\gamma_2)$$
for $f\in C_c^\infty(\gr_i^j,L_{ij}\otimes \Omega^{\frac{1}{2}})$, $g\in C_c^\infty(\gr_j^k,L_{jk}\otimes \Omega^{\frac{1}{2}})$, where, as usual, the integral is the integral of a canonically associated $1$-density obtained by product of two half densities together with the Fell product $\bullet_F$.

We will define an algebra of pseudodifferential distributions that extends the product above in a natural way. For each $(i,j)\in I^2$ as above one can consider the couple of manifold-submanifold $(\gr_i^j,\Omega_{ij})$ where $\Omega_{ij}$ stands for the intersection (it might be empty of course). We can consider the spaces of compactly supported generalized sections with pseudodifferential singularities on $\Omega_{ij}$ of order $m$
$$P_c^m(\gr_i^j,\Omega_{ij};L_{ij}\otimes \Omega^{\frac{1}{2}}),$$
see section \ref{ASsection} below for more details. It is essentially a space of conormal distributions associated to the couple $(\gr_i^j,\Omega_{ij})$. We let as usual 
$$P_c^\infty(\gr,\alpha_\Omega)=\bigcup_{m\in \mathbb{Z}}P_c^m(\gr,\alpha_\Omega)$$
and  
$$P_c^{-\infty}(\gr,\alpha_\Omega)=\bigcap_{m\in \mathbb{Z}}P_c^m(\gr,\alpha_\Omega).$$

The product
\begin{equation}
\xymatrix{
P_c^m(\gr_i^j,\Omega_{ij};\Omega^{\frac{1}{2}}\otimes L_{ij})\times P_c^n(\gr_j^k,\Omega_{jk};\Omega^{\frac{1}{2}}\otimes L_{jk})\ar[r]&P_c^{m+n}(\gr_i^k,\Omega_{ik};\Omega^{\frac{1}{2}}\otimes L_{ik})
}
\end{equation}
defined in section \ref{pdogrpds} is given as follows: Let $K_{ij}\in P_c^m(\gr_i^j,\Omega_{ij};\Omega^{\frac{1}{2}}\otimes L_{ij})$ and $K_{jk}\in P_c^n(\gr_j^k,\Omega_{jk};\Omega^{\frac{1}{2}}\otimes L_{jk})$. By proposition \ref{ASproduct} (a) the product
$$p_1^*(K_{ij})\cdot p_2^*(K_{jk})$$ makes sense as a distribution acting on  
$$C_c^\infty(\gr_i^j\,_{t_j}\times_{s_j}\gr_j^k,\Omega^1(\gr_i^j\,_{t_j}\times_{s_j}\gr_j^k)\otimes (p_1^*(\Omega^{\frac{1}{2}}\otimes L_{ij})\otimes p_2^*(\Omega^{\frac{1}{2}}\otimes L_{jk}))^*),$$
where $p_1,p_2$ stand for the canonical projections. By lemma \ref{LMVlemma}, the last space is isomorphic to 
$$C_c^\infty(\gr_i^j\,_{t_j}\times_{s_j}\gr_j^k,\Omega^1(\gr_i^j\,_{t_j}\times_{s_j}\gr_j^k)\otimes(\Omega^1Ker\,dm\otimes m^*(\Omega^{\frac{1}{2}}\otimes L_{ik}))^*),$$ 
where $m$ denotes the groupoid multiplication map.
We can now apply the external product construction \ref{ASproduct} (b) to define 
\begin{equation}\label{twistdistproductintro}
K_{ij}*K_{jk}:=m!(p_1^*(K_{ij})\cdot p_2^*(K_{jk}))\in P_c^{m+n}(\gr_i^k,\Omega_{ik};\Omega^{\frac{1}{2}}\otimes L_{ik})
\end{equation}

The main result of this paper is the following (theorem \ref{pdoalg})
\begin{theorem}\label{pdoalgintro}
With the product (\ref{twistdistproductintro}) and the involution (\ref{twistdistinvolution}) described below, the union $$P_c^\infty(\gr,\alpha_\Omega)$$
forms a filtered $*-$algebra with bilateral ideal 
$$P_c^{-\infty}(\gr,\alpha_\Omega)\cong C_c^\infty(\gr,\alpha_\Omega).$$
\end{theorem}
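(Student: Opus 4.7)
The plan is to reduce the theorem to three essentially independent checks on the local pieces $P_c^m(\gr_i^j,\Omega_{ij};L_{ij}\otimes\Omega^{\frac{1}{2}})$: (i) the product (\ref{twistdistproductintro}) is well-defined and respects the filtration by order, (ii) it is associative and compatible with the involution, and (iii) the regularizing piece is exactly the twisted convolution algebra and forms a bilateral ideal.

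First I would unpack the well-definedness and filtration. The product has already been constructed by composing the pullback $p_1^*(K_{ij})\cdot p_2^*(K_{jk})$, for which the transversality hypothesis in proposition \ref{ASproduct}(a) holds because $\Omega_{ij}\times\Omega_{jk}$ is a submanifold of $\gr_i^j\,_{t_j}\!\times_{s_j}\gr_j^k$ whose conormal intersects the conormals of $p_1^{-1}(\Omega_{ij})$ and $p_2^{-1}(\Omega_{jk})$ cleanly, with the pushforward $m_!$ via the submersion induced by groupoid multiplication (proposition \ref{ASproduct}(b)). The order adds under these two operations, giving $K_{ij}\ast K_{jk}\in P_c^{m+n}(\gr_i^k,\Omega_{ik};L_{ik}\otimes\Omega^{\frac{1}{2}})$. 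Summing over $(i,j,k)$ is legitimate because the cover $\Omega$ is locally finite and the operators are compactly supported, so only finitely many triples contribute to any bounded region; this gives a well-defined total product on $P_c^\infty(\gr,\alpha_\Omega)$ that descends on smooth sections to the twisted convolution (\ref{twistdistproductintro}) of the introduction.

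Next comes associativity, which I expect to be the main obstacle. For smooth $K_{ij}, K_{jk}, K_{k\ell}$ associativity is known from $C_c^\infty(\gr,\alpha_\Omega)$, where it follows from the associativity of groupoid multiplication combined with the cocycle identity for $\alpha_\Omega$ (which is exactly the associativity of the Fell product $\bullet_F$ on $L_{ij}\times L_{jk}\to L_{ik}$). To extend to distributions, I would express the two iterated products $(K_{ij}\ast K_{jk})\ast K_{k\ell}$ and $K_{ij}\ast(K_{jk}\ast K_{k\ell})$ as a single three-fold pushforward from $\gr_i^j\,_{t_j}\!\times_{s_j}\gr_j^k\,_{t_k}\!\times_{s_k}\gr_k^\ell$ onto $\gr_i^\ell$ along the iterated multiplication. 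This requires a Fubini statement for conormal pushforwards along a composition of two clean submersions, reducing two successive applications of proposition \ref{ASproduct}(b) to a single application on the composite; combined with the pointwise Fell-associativity in the fibres of the line bundles, this yields distributional associativity. The involution is defined by pulling back by $\iota:\gamma\mapsto \gamma^{-1}$ and using the canonical conjugate-linear isomorphism $L_{ij}\to L_{ji}^*$ induced by $\alpha_\Omega$; since $\iota$ is a diffeomorphism carrying $(\gr_i^j,\Omega_{ij})$ to $(\gr_j^i,\Omega_{ji})$, this preserves conormality and order, and the identity $(K\ast L)^*=L^*\ast K^*$ follows from the compatibility $\iota\circ m=m\circ(\iota\times\iota)\circ\mathrm{swap}$ plus the conjugate-linearity of the Fell product on the dual bundles.

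Finally, the identification $P_c^{-\infty}(\gr_i^j,\Omega_{ij};L_{ij}\otimes\Omega^{\frac{1}{2}})\cong C_c^\infty(\gr_i^j;L_{ij}\otimes\Omega^{\frac{1}{2}})$ is the standard fact that conormal distributions of order $-\infty$ along a submanifold are smooth, so globally $P_c^{-\infty}(\gr,\alpha_\Omega)\cong C_c^\infty(\gr,\alpha_\Omega)$, and by construction the product (\ref{twistdistproductintro}) reduces to the twisted convolution on this subspace. For the bilateral ideal property, if either factor is smooth then the relevant product $p_1^*K_{ij}\cdot p_2^*K_{jk}$ is a smooth multiple of a conormal distribution supported in a proper set, and $m_!$ of a compactly supported conormal distribution against a smooth multiplier remains smooth when the other factor is smooth (since the conormal singularity disappears after integration along the fibres of the submersion $m$). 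This shows $P_c^{-\infty}(\gr,\alpha_\Omega)$ is an ideal, completing the filtered $\ast$-algebra structure.
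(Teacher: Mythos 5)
Your proposal is correct and follows essentially the same route as the paper: the key step in both is to rewrite each iterated product as a single three-fold pushforward from $\gr_i^j\,{}_{t_j}\!\times_{s_j}\gr_j^k\,{}_{t_k}\!\times_{s_k}\gr_k^l$ along the iterated multiplication, using functoriality of pullback and pushforward together with their base-change compatibility (your ``Fubini statement'' is precisely the paper's Propositions \ref{Pullfunct}, \ref{Pushfunct} and \ref{PullPush}), while the identification $P_c^{-\infty}\cong C_c^\infty(\gr,\alpha_\Omega)$ and the ideal property are immediate from the conormal-distribution framework and the filtration. The only cosmetic difference is that the paper leaves the involution and ideal checks as ``immediate,'' whereas you spell them out.
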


The above result requires first to understand several operations on Androulidakis-Skandalis' generalized distributions, pullback, pushforward, external product. We study these operations and their properties (functoriality, compatibility between them, etc.) in section \ref{ASsection}. Then in section \ref{pdogrpds} we define the product and prove the theorem (the associativity of the product requires special attention). 

We show afterwards, as in the untwisted case, that the algebra above can be explicitly realized as an algebra of multipliers on $C_c^\infty(\gr,\alpha_\Omega)$, corollary \ref{smoothmultipliers}, that we denote in a more familiar way by 
$$\Psi_c^\infty(\gr,\alpha_\Omega)$$
with bilateral ideal 
$$\Psi_c^{-\infty}(\gr,\alpha_\Omega).$$


For example, if $\alpha_\Omega$ is trivial, then the algebra above gives precisely the algebra of $\gr$- pseudodifferential operators independently introduced in \cite{MP} and in \cite{NWX}, and which already covers various very interesting geometric situations. In fact for an expert knowing about this groupoid calculus is tempting to adapt the invariant families definition ($\gr$-operators) from the previous references to include the twisting, however this becomes quickly complicated when using twisted gerbes over groupoids, and even more for proving all the desired properties (symbolic calculus, extensions, parametrices, etc.), instead we decided to follow a similar approach to the one in \cite{ASpdo} where Androulidakis and Skandalis construct the pseudodifferential calculus for singular foliations, we define then our operators by means of generalized distributions with pseudodifferential singularities and we used the Fell line bundle naturally associated to the twisting gerbe to properly define the product. Our product is then a natural extension of the twisted convolution product in $C_c^\infty(\gr,\alpha_\Omega)$.
We prove afterwards that our operators define indeed families of operators invariant under a twisted action as expected, section \ref{twistedfamiliessection}. In the case the twisting $\alpha$ is torsion one can extend the operator algebra above to include coefficients on twisted vector bundles.


As far as we know, if $\alpha$ is not necessarily trivial, the more general case covered before the present work (not in terms of groupoids) is for $\gr$ the groupoid associated associated to a fibration $\phi:M\to X$  and $\alpha$ torsion and it was treated in the seminal work by Mathai, Melrose and Singer (\cite{MMSI}) where they started the development of index theory in this context, the case where $\alpha$ is supposed to be torsion once pullbacked to $M$ is studied in \cite{BG} where Benameur and Gorokhovsky also prove a local index theorem, we were very much inspired by this last paper. After the release of this paper (february 2016 on the arxiv), Benameur, Gorokhovsky and Leichtnam released (july 2016 on the arxiv) the article \cite{BGL} in which they describe part of the calculus for foliations, this is another example that fits in our setting. Of course in the last ref.cit. paper the authors go further and prove a higher index theorem. Other particular projective operators have been treated in the litterature, we will mention some of them later on the paper.

We continue with the description of the contents of this paper. We develop the associated symbolic calculus, symbol short exact sequences and existence of parametrices. In particular the algebra of projective operators appears as a quantization of the twisted symbol algebra that we properly introduce in section \ref{twistsymbsec}. 

As the (untwisted) Lie groupoid case that it encompasses, the negative order operators extend to the twisted $C^*$-algebra and the zero order operators act as bounded multipliers on it.
 
We obtain an analytic index morphism in twisted K-theory associated in a classic way by the corresponding pseudodifferential extension, definition \ref{defanaindex},
\begin{equation}\label{aindexintro}
\xymatrix{
K^1(S^*\gr,\pi^*\alpha^0)\ar[rr]^-{Ind^{a}_{(\gr,\alpha)}}&&K^0(\gr,\alpha)
}
\end{equation} 
where $K^0(\gr,\alpha):=K_0(C^*(\gr,\alpha))$.
We prove that this index morphism only depends on the isomorphism class of the cocycle, {\it i.e.,} on the twisting as the associated class in $H^1(G;PU(H))$ (proposition \ref{AIndexinvariance}), of course, as the untwisted case, it is not a Morita invariant of the correspondent twisted differentiable stack. The one that is Morita invariant is the Baum-Connes assembly map constructed in \cite{CaWangAENS}.
 
We also show that the analytic index morphism above factors in a canonical way by the index we constructed in our previous work \cite{CaWangAdv} by means of the Connes tangent groupoid, theorem \ref{DefindexvsAnaindex}, obtaining as a consequence the analytic interpretation, in terms of pseudodifferential operators and ellipticity, of the twisted longitudinal Connes-Skandalis index theorem, theorem \ref{twistCSthm}.

In the final section we discuss two examples of classes of operators unified by our setting:
\begin{enumerate} 
\item The projective longitudinal families of Dirac operators associated to any torsion twisted groupoid, for which we describe two explicit subexamples, one case for foliations that includes the case of fibrations treated in \cite{MMSI} by Mathai, Melrose and Singer or in \cite{BG} by Benameur and Gorokhovsky and the case treated by Benameur, Gorokhovsky and Leichtnam in \cite{BGL}; and one case for a $\Gamma$-covering twisted by a projective representation. 
\item The projective symbols of Fractional Indices' projective pseudodifferential operators. In this last paragraph we discuss in detail the case of operators introduced in \cite{MMSfrac}, we explain that these are not the kind of operators treated in this article but that the their total symbols are. 
\end{enumerate}

{\bf Acknowledgements:} This project born during the tea breaks discussions with Bai-Ling Wang at the Max Planck Institut fur Mathematics at Bonn in 2008. The article was continued and completed during a stay at the MPIM of the author in 2015, I am very grateful to this Institution for the excellent working conditions I enjoyed during the realization and preparation of this work. I want to thank Bai-Ling Wang, who was part of this project in an earlier stage, for introduce me to twisted world and for all the past and future twisted (or untwisted) projects those tea breaks discussions have generated.

\section{Preliminaries on twistings on groupoids and twisted algebras}

In this section, we review the notion of twistings on Lie groupoids and their $C^*$-algebras and 
discuss some examples which appear in this paper.
Let us recall what a groupoid is:

\begin{definition}
A $\it{groupoid}$ consists of the following data:
two sets $\gr$ and $\go$, and maps
\begin{itemize}
\item[(1)]  $s,r:\gr \rightarrow \go$ 
called the source map and target map respectively,
\item[(2)]  $m:\gr^{(2)}\rightarrow \gr$ called the product map 
(where $\gr^{(2)}=\{ (\gamma,\eta)\in \gr \times \gr : s(\gamma)=r(\eta)\}$),
\end{itemize}
together with  two additional  maps, $u:\go \rightarrow \gr$ (the unit map) and 
$i:\gr \rightarrow \gr$ (the inverse map),
such that, if we denote $m(\gamma,\eta)=\gamma \cdot \eta$, $u(x)=x$ and 
$i(\gamma)=\gamma^{-1}$, we have 
\begin{itemize}
\item[(i)] $r(\gamma \cdot \eta) =r(\gamma)$ and $s(\gamma \cdot \eta) =s(\eta)$.
\item[(ii)] $\gamma \cdot (\eta \cdot \delta)=(\gamma \cdot \eta )\cdot \delta$, 
$\forall \gamma,\eta,\delta \in \gr$ whenever this makes sense.
\item[(iii)] $\gamma \cdot u(x) = \gamma$ and $u(x)\cdot \eta =\eta$, $\forall
  \gamma,\eta \in \gr$ with $s(\gamma)=x$ and $r(\eta)=x$.
\item[(iv)] $\gamma \cdot \gamma^{-1} =u(r(\gamma))$ and 
$\gamma^{-1} \cdot \gamma =u(s(\gamma))$, $\forall \gamma \in \gr$.
\end{itemize}
For simplicity, we denote a groupoid by $\gr \rightrightarrows \go $. A strict morphism $f$ from
a  groupoid   $\hr \rightrightarrows \ho $  to a groupoid   $\gr \rightrightarrows \go $ is  given
by  maps 
\[
\xymatrix{
\hr \ar@<.5ex>[d]\ar@<-.5ex>[d] \ar[r]^f& \gr \ar@<.5ex>[d]\ar@<-.5ex>[d]\\
\ho\ar[r]_{f_0}&\go
}
\]
which preserve the groupoid structure, i.e.,  $f$ commutes with the source, target, unit, inverse  maps, and respects the groupoid product  in the sense that $f(h_1\cdot h_2) = f (h_1) \cdot f(h_2)$ for any $(h_1, h_2) \in \hr^{(2)}$.

\end{definition}

In  this paper we will only deal with Lie groupoids, that is, 
a groupoid in which $\gr$ and $\go$ are smooth manifolds, and $s,r,m,u$ are smooth maps (with s and r submersions, see \cite{Mac,Pat}). 
 
Lie groupoids form a category with  strict  morphisms of groupoids. It is now a well-established fact  in Lie groupoid's theory that the right category to consider is the one in which Morita equivalences correspond precisely to isomorphisms.  We review some basic definitions and properties of generalized morphisms between Lie groupoids, see \cite{TXL} section 2.1, or 
\cite{HS,Mr,MM} for more detailed discussions.

\begin{definition}[Generalized morphisms]\label{HSmorphism}   
Let $\gr \rightrightarrows \go$ and  
$\hr \rightrightarrows \ho$ be two Lie groupoids.  A generalized groupoid morphism, also called a Hilsum-Skandalis morphism, from $\hr$ to $\gr$ is given by  principal $\gr$-bundle over $\hr$, that 
is, a right  principal $\gr$-bundle over $\ho$
which is also a left $\hr$-bundle over $\go$ such that the   the right $\gr$-action and the left 
$\hr$-action commute,  formally denoted by
\[
f:  \xymatrix{\hr \ar@{-->}[r] &  \gr}
\]
or by  
\[
\xymatrix{
\hr \ar@<.5ex>[d]\ar@<-.5ex>[d]&P_f \ar@{->>}[ld] \ar[rd]&\gr \ar@<.5ex>[d]\ar@<-.5ex>[d]\\
\ho&&\go.
}
\]
if   we want to emphasize  the bi-bundle $P_f$ involved. 
\end{definition}

Notice that a generalized morphism (or Hilsum-Skandalis morphism),   
$f:  \xymatrix{\hr \ar@{-->}[r] &  \gr}$, is given by one of the three equivalent data:
\begin{enumerate}
\item A  locally trivial  right  principal $\gr$-bundle $P_f$  over  $\hr$  as Definition \ref{HSmorphism}. 
\item A 1-cocycle $f=\{(\Omega_i,f_{ij})\}_{i\in I}$ on $\hr$ with values in $\gr$. Here a  $\gr$-valued 1-cocycle on  $\hr$ with respect to  an indexed open covering $\{\Omega_i\}_{i\in I}$ of $\ho$ is  a collection of smooth maps 
 $$f_{ij}:\hr_{\Omega_j}^{\Omega_i} \to\gr,$$
 satisfying the following cocycle condition:
$\forall \gamma \in \hr_{ij}$ and $\forall \gamma'\in \hr_{jk}$ with $s(\gamma)=r(\gamma')$, we have
\begin{center}
$f_{ij}(\gamma)^{-1}=f_{ji}(\gamma^{-1})$ and $f_{ij}(\gamma)\cdot f_{jk}(\gamma')=f_{ik}(\gamma\cdot \gamma').$
\end{center}
We will denote this data by $f=\{(\Omega_i,f_{ij})\}_{i\in I}$. 

\item A  strict morphism of groupoids 
 
\begin{equation}\nonumber
\xymatrix{
\hr_{\Omega}=\bigsqcup_{i,j}\hr_{\Omega_j}^{\Omega_i} \ar@<.5ex>[d]\ar@<-.5ex>[d]\ar[rr]^-f &&\gr\ar@<.5ex>[d]\ar@<-.5ex>[d]\\
 \bigsqcup_{i}\Omega_{i}\ar[rr]&&\go.
}
\end{equation}
for an open cover  $\Omega= \{\Omega_i\}$ of $\ho$.
 \end{enumerate}

 Associated to  a $\gr$-valued 1-cocycle on  $\hr$, there is a canonical defined  principal $\gr$-bundle over $\hr$.  In fact, any principal $\gr$-bundle over $\hr$ is locally trivial (Cf. \cite{MM}).  

\vspace{2mm}
  
\begin{example}  \begin{enumerate}
\item  (Strict morphisms)
Consider a (strict) morphism of groupoids
\[
\xymatrix{
\hr \ar@<.5ex>[d]\ar@<-.5ex>[d] \ar[r]^f& \gr \ar@<.5ex>[d]\ar@<-.5ex>[d]\\
\ho\ar[r]_{f_0}&\go
}
\]
Using the equivalent definitions 2. or 3. above, it is obviously a generalized morphism by taking $\Omega=\{\ho\}$.
In terms of  the language of principal bundles,  the bi-bundle  is simply given  by $$P_f:=\ho\times_{f_0,t}\gr,$$
with projections $t_f:P_f\to \ho$, projection in the first factor, and 
$s_f:P_f\to \go$, projection using the source map of $\gr$. The actions are the obvious ones, that is,
on the left, $h\cdot (a,g):=(t(h),f(h)\circ g)$ whenever $s(h)=a$ and, on the right, $(a,g)\cdot g':=(a,g\circ g')$ whenever $s(g)=t(g')$.
 \item (Classic principal bundles)
Let $X$ be a manifold and $G$ be a Lie group. By definition a generalized morphism between the unit groupoid $X\rightrightarrows X$ (that is a manifold seen as a Lie groupoid all structural maps are the identity) and the Lie group $G\rightrightarrows \{e\}$ seen as a Lie groupoid is given by a $G$-principal bundle over $X$. 

\end{enumerate}
\end{example}

As the name suggests,  generalized morphism  generalizes the notion of strict morphisms and can be composed. Indeed, if $P$ and $P'$ are generalized morphisms from $\hr$ to $\gr$ and from $\gr$ to $\lr$ respectively, then 
$$P\times_{\gr}P':=P\times_{\go}P'/(p,p')\sim (p\cdot \gamma, \gamma^{-1}\cdot p')$$
is a generalized morphism from $\hr$ to $\lr$.  Consider the category $Grpd_{HS}$ with objects Lie groupoids and morphisms given by isomorphism classes of generalized morphisms. There is a functor
\begin{equation}\label{grpdhs}
Grpd \longrightarrow Grpd_{HS}
\end{equation}
where $Grpd$ is the strict category of groupoids. 
  
 \subsection{Twistings on  Lie groupoids}
 
 In this paper,  we are going to consider $PU(H)$-twistings on Lie groupoids 
where $H$ is an infinite dimensional, complex and separable
Hilbert space, and $PU(H)$ is the projective unitary group $PU(H)$  with the topology induced by the
norm topology on the unitary group  $U(H)$. 

\begin{definition}\label{twistedgroupoid}
A  twisting $\alpha$  on a   Lie  groupoid $\gr \rightrightarrows \go$  is given by the isomorphism class of a  generalized morphism 
\[ \xymatrix{
\alpha: \gr \ar@{-->}[r]  & PU(H).}
\]
Here $PU(H)$ is viewed  as a Lie groupoid with the unit space $\{e\}$. Two twistings 
$\alpha$ and $\alpha'$ are called equivalent if they are  equivalent as generalized morphisms.
\end{definition}
 
 So a twisting on a Lie groupoid $\gr$ can be represented by a locally trivial  right  principal $PU(H)$-bundle $P_{\alpha}$ over $\gr$
hence,  represented by
 a $PU(H)$-valued 1-cocycle on $\gr$
\begin{equation}\label{galphaOmega}
\alpha_{ij}:   \gr_i^j \longrightarrow PU(H)
\end{equation}
for an open cover $\Omega= \{\Omega_i\}$ of $\go$ and where $\gr_i^j:=s^{-1}(\Omega_i)\cap t^{-1}(\Omega_j)$. That is,  a twisting  datum $\alpha$ on a Lie  groupoid $ \gr $ can be represented by a strict morphism of groupoids 
\begin{equation}\label{galpha}
\xymatrix{
\gr_{\Omega}=\bigsqcup_{i,j}\gr_i^j \ar@<.5ex>[d]\ar@<-.5ex>[d]\ar[rr]^-{\alpha_\Omega} &&PU(H) \ar@<.5ex>[d]\ar@<-.5ex>[d]\\
 \bigsqcup_{i}\Omega_{i}\ar[rr]&&\{e\}.
}
\end{equation}
for an open cover  $\Omega= \{\Omega_i\}$ of $\go$.

 \begin{remark}
 The definition of generalized morphisms given in the last subsection was for two Lie groupoids. The group $PU(H)$ it is not precisely a Lie group but it makes perfectly sense to speak of generalized morphisms from Lie groupoids to this infinite dimensional   groupoid following exactly the same definition,  see (\ref{galphaOmega}) and (\ref{galpha}).
 \end{remark}
 
 \begin{remark}
 In practice one restrict, without lost of generality, to the use of good locally finite open covers over the base manifold of the groupoid, by good we mean that the open subsets $\Omega_i$ can be supposed to be manifold charts. We will always assume our covers are of this kind. 
 \end{remark}

\begin{example} \label{example} For a list of  various twistings on some   standard groupoids see example 1.8 in \cite{CaWangAdv}. Here we will only  a few  basic examples used in this paper.
  
 \begin{enumerate}
\item (Twisting on manifolds)  Let $X$ be a $\ci$-manifold. We can consider the  Lie groupoid 
 $X\rightrightarrows X$  where every morphism is the identity over $X$.  A twisting on $X$ is
given by a locally trivial principal $PU(H)$-bundle over $X$, or equivalently,  
 a twisting on $X$ is defined by a strict homomorphism 
\[
\xymatrix{
X_{\Omega}=\bigsqcup_{i,j} \Omega_{i, j} \ar@<.5ex>[d]\ar@<-.5ex>[d]\ar[rr]^-f &&PU(H) \ar@<.5ex>[d]\ar@<-.5ex>[d]\\
 \bigsqcup_{i}\Omega_{i}\ar[rr]&&\{e\}.
}
 \]
with respect to an open cover $\{\Omega_i\}$ of $X$, where $\Omega_{ij} =\Omega_i \cap\Omega_j$. 
Therefore, the restriction of a twisting $\alpha$ on a  Lie groupoid $\gr \rightrightarrows \go$
to its unit $\go$ defines a twisting  $\alpha_0$ on the manifold $\go$.

\item\label{obundle} (Orientation twisting) Let $X$ be a  manifold with an oriented real vector bundle $E$. The  bundle $E \to X$ defines
a natural generalized morphism 
\[
\xymatrix{
X\ar@{-->}[r] & SO(n).}
\]
Note that the fundamental spinor  representation of   $Spin^c(n)$ gives rise to a commutative
diagram of Lie group homomorphisms
\[
\xymatrix{
Spin^c(n) \ar[d]   \ar[r] & U(\mathbb{C}^{2^{[n/2]}}) \ar[d] \\
SO(n) \ar[r] & PU(\mathbb{C}^{2^{[n/2]}}).}
\]
With a choice of inclusion $\mathbb{C}^{2^{[n/2]}}$ into a Hilbert space $H$, we have a canonical
twisting, called the orientation twisting, denoted by
\begin{equation}\label{otwistingX}
\xymatrix{
\o_{E}:  X\ar@{-->}[r] & PU(H).}
\end{equation}
If now $\gr\rightrightarrows X$ is a Lie groupoid and $E$ is an oriented $\gr$-vector bundle over $X$, we have in the same way an orientation twisting
\begin{equation}\label{otwisting}
\xymatrix{
\o_{E}:  \gr\ar@{-->}[r] &SO(n)\ar[r]& PU(H)}
\end{equation}
in the case where $E$ admits an $\gr$-invariant metric. 

\item (Pull-back twisting) Given a twisting $\alpha$ on $ \gr$ and  for any generalized 
homomorphism $\phi: \hr \to \gr$, there is a pull-back twisting 
\[\xymatrix{
\phi^*\alpha:  \hr  \ar@{-->}[r]  & PU(H)}
\]
defined by the composition of $\phi$ and $\alpha$.  In particular, 
for a continuous map $\phi: X\to Y$, a twisting $\alpha$ on $Y$ gives a pull-back twisting 
$\phi^*\alpha$ on $X$. The principal $PU(H)$-bundle over $X$ defines by $\phi^*\alpha$ is
the pull-back of the  principal $PU(H)$-bundle on $Y$ associated to $\alpha$.

\item (Twisting on fiber product groupoid)  Let $N\stackrel{p}{\rightarrow} M$ be a submersion. We consider the fiber product $N\times_M N:=\{ (n,n')\in N\times N :p(n)=p(n') \}$,which is a manifold because $p$ is a submersion. We can then take the groupoid 
$$N\times_M N\rightrightarrows N$$ which is  a subgroupoid of the pair groupoid 
$N\times N \rightrightarrows N$.  Note that this groupoid is in fact Morita equivalent to the groupoid $ M \rightrightarrows M$.  A twisting on  
 $N\times_M N\rightrightarrows N$ is  given by
a pull-back twisting from a   twisting on  $M$.  

\item (Twisting on the space of leaves of a foliation)
Let $(M,F)$ be a regular foliation with holonomy groupoid $\gr_{M }$. A twisting on the space of leaves  is by definition a twisting on the holonomy groupoid $\gr_{M }$. We will often use the notation 
\[\xymatrix{
M/F \ar@{-->}[r] & PU(H)}
\]
for the corresponding  generalized morphism.

Notice that by definition a twisting on the spaces of leaves is a twisting on the base $M$ which admits a compatible action of the holonomy groupoid. It is however not enough to have a twisting on base which is leafwisely constant, see for instance remark 1.4 (c) in \cite{HS}.

\end{enumerate}
\end{example}

\subsection{Twisted groupoid's $C^*$-algebras}
Let $(\gr,\alpha)$ be a twisted groupoid. With respect to a covering  $\Omega = \{\Omega_i\}$ of $\go$, the twisting $\alpha$ is given by a strict morphism of groupoids 
$$ \alpha_\Omega: \gr_{\Omega}   \longrightarrow PU(H),$$
where $\gr_{\Omega}$ is the covering groupoid associated to $\Omega$. 
Consider the central extension of groups
$$S^1 \longrightarrow U(H) \longrightarrow PU(H),$$
 we can pull it back to get a $S^1$-central extension of Lie groupoid $R_{\alpha}$  over $\gr_{\Omega}$ 
\begin{equation}
\xymatrix{
  S^1\ar[d]\ar[r]& S^1\ar[d]\\
R_{\alpha}\ar[d]\ar[r]&U(H)\ar[d]\\
\gr_{\Omega}\ar[r]_-{\alpha}&PU(H)\\
}
\end{equation}
In particular, $R_{\alpha}\rightrightarrows \bigsqcup_i\Omega_i$ is a Lie groupoid and $R_{\alpha}\longrightarrow \gr_{\Omega}$ is a $S^1$-principal bundle.

We recall the definition of the convolution  algebra and the $C^*$-algebra of a twisted Lie groupoid $(\gr, \alpha_\Omega)$ \cite{Ren87,TXL}:

\begin{definition}
Let $R_{\alpha}$ be the $S^1$-central extension of groupoids associated to a twisting $\alpha$. The convolution algebra of $(\gr, \alpha_\Omega)$ is by definition the following sub-algebra of 
$C_{c}^{\infty}(R_{\alpha})$: 
\begin{equation}
C_{c}^{\infty}(\gr, \alpha_\Omega)=\{f\in C_{c}^{\infty}(R_{\alpha}): f(\tilde{\gamma} \cdot \lambda)=\lambda^{-1} f(\tilde{\gamma}), \forall \tilde{\gamma}\in R_{\alpha },\forall \lambda \in S^1\}.
\end{equation}
The maximal(reduced resp.) $C^*$-algebra of $(\gr, \alpha_\Omega)$, denoted by $C^*(\gr,  \alpha_\Omega)$ ($C^*_r(\gr, \alpha_\Omega)$ resp.),  is the completion of 
$C_{c}^{\infty}(\gr, \alpha_\Omega)$ in $C^*(R_{\alpha})$ ($C_r^*(R_{\alpha})$ resp.).
\end{definition}

Let $L_{\alpha}:=R_{\alpha}\times_{S^1}\mathbb{C}$ be the  complex line bundle  over $\gr_\Omega$ which can be considered as a Fell bundle using the groupoid structure of $R_{\alpha}$ over $\gr_{\Omega}$, indeed, denote by $L_{ij}$ the line bundle over $\gr_{ij}$, the fiber over a given $g\in \gr_{ij}$ is $L_{ij}^g=R_{\alpha}^g\times_{S^1}\mathbb{C}$ and so the groupoid product on $R_{\alpha}$ gives an isomorphism
$$L_{ij}^g\times L_{jk}^h\mapsto L_{ik}^{hg}$$
whenever $\gr_i^j \,_{t_j}\times_{s_j}\gr_j^k$ not empty and $(g,h)\in \gr_i^j \,_{t_j}\times_{s_j}\gr_j^k$. To be more precise, let $i,j,k\in I$ such that $\gr_i^j \,_{t_j}\times_{s_j}\gr_j^k$ is not empty. Consider the projections $p_1:\gr_i^j \,_{t_j}\times_{s_j}\gr_j^k\to \gr_i^j$ and 
$p_2:\gr_i^j \,_{t_j}\times_{s_j}\gr_j^k\to \gr_j^k$ in the first and in the second coordinate. Let $m:\gr_i^j \,_{t_j}\times_{s_j}\gr_j^k\to \gr_i^k$ be the restriction of the groupoid product. We have an isomorphism
\begin{equation}\label{Fellproperty}
p_1^*(L_{ij})\otimes p_2^*(L_{jk})\cong m^*(L_{ik}).
\end{equation}

The algebra of  compactly supported smooth sections of this Fell bundle, denoted by  $C_{c}^{\infty}(\gr_{\Omega },L_{\alpha_\Omega})$ has a convolution product induced by previous equation, this algebra is isomorphic to $C_{c}^{\infty}(\gr, \alpha_\Omega)$, see (23) in \cite{TXL} for an explicit isomorphism.

\begin{definition}\label{twistedkth} 
Following \cite{TXL}, we define the twisted K-theory of the twisted groupoid $(\gr,\alpha)$ by
\begin{equation}
K^i(\gr,\alpha_\Omega):=K_{-i}(C^*(\gr, \alpha_\Omega)).
\end{equation}
\end{definition}

\begin{remark}
For  the groupoid  given by  a manifold $M\rightrightarrows M$. A twisting on $M$ can be given by a Dixmier-Douday class on $H^3(M,\mathbb{Z})$. In this event, the twisted K-theory, as we defined it, coincides with  twisted K-theory defined in \cite{ASeg,Kar08}. Indeed the $C^*$-algebra $C^*(M,\alpha)$ is Morita equivalent to the continuous trace $C^*$-algebra defined by the corresponding Dixmier-Douady class (see for instance Theorem 1 in \cite{FMW}).
\end{remark}

\section{Androulidakis-Skandalis generalized functions with pseudodifferential singularities}\label{ASsection}

In this section we want to setup the background material about pdos that we will use in the sequel. Some of the contents of this section can be found in section 1 of Androulidakis-Skandalis paper \cite{ASpdo} where the context is in principle very different.

{\bf Notation.} In the sequel, if $E$ is a smooth vector bundle we denote by $\Omega^sE$ the bundle of $s-$densities. In the case we have a given a manifold $M$ we denote by $\Omega^sM$ the bundle of $s$-densities over its tangent space. 

Let $M$ be a smooth manifold and $V$ a smooth closed submanifold of $M$. A {\it generalized function on $M$ with pseudodifferential singularity on $V$ of order $m$} is a distribution $P$ acting on $f\in C_c^\infty(M,\Omega^1(TM))$ by
$$\langle P,f\rangle:=\int_Mh(m)f(m)+(2\pi)^{-q}\int_{N^*}a(p\circ \phi(m),\xi)\chi(m)f(m)e^{-i\langle\phi(m),\xi\rangle},$$
where
\begin{itemize}
\item $p:N\to V$ is the normal bundle of $V$ in $M$, of rank $q$,
\item $\phi: U\to N$ denotes a tubular neighborhood of $V$ in $M$,
\item $h\in C^\infty(M)$,
\item $\chi$ is a smooth cut-off function equal to 1 in a neighborhood of $V$ and to 0 outside $U$, and
\item $a\in Symb_{cl}^m(N^*;\Omega^1N^*)$ is a classic symbol of order $m$ with compact support in the direction of the base\footnote{Through the paper we will assume our symbols to have compact support in the base direction}.
\end{itemize}

\begin{definition}
The generalized functions with pseudodifferential singularities of order $m$ forms a vector space denoted by $P^m(M,V)$. Those generalized functions that vanish outside a compact set of $M$ are denoted by $P_c^m(M,V)$.

In the previous construction, if $E$ is any smooth complex vector bundle over $M$ and $f\in C_c^\infty(M,\Omega^1(TM)\otimes E^*)$ we may extend easily the definition to include generalized sections of any smooth complex vector bundle $E$ over $M$, these are denoted by $P^m(M,V;E)$ (respectively $P^m_c(M,V;E)$ those with compact support).
\end{definition}

We will list below some important operations one has on the Androulidakis-Skandalis spaces:

For the spaces of generalized sections, the following properties hold

{\bf Pullback:} Consider a smooth map $g:M' \to M$ transverse to $V$, there is a functorial construction
$$g^*:P(M,V;E)\to P(M',V';g^*E)$$
for any $E$ smooth vector bundle over $M$ and $V':=g^{-1}(V)$.
We explain this in detail, let $P\in P(M,V;E)$, suppose that $P$ acts as 
$$\langle P,f\rangle:=\int_Mh(m)f(m)+(2\pi)^{-q}\int_{N^*}a(p\circ \phi(m),\xi)\chi(m)f(m)e^{-i\langle\phi(m),\xi\rangle},$$
for $f\in C_c^\infty(M,\Omega^1(TM)\otimes E^*)$, $h\in C^\infty(M,E)$, $a\in Symb_{cl}^m(N^*;\Omega^1N^*\otimes E)$ and $\phi$ and $\chi$ as above.
We want to explicitly describe, for $f'\in C_c^\infty(M',\Omega^1(TM')\otimes (g^*E)^*)$ 
$$\langle g^*P,f\rangle.$$
For this it will be enough to describe the corresponding structure data $h',a', \phi'$ and  $\chi'$ associated to $h, a, \phi$ and $\chi$:
\begin{itemize}
\item We let $h'=h\circ g$.
\item Next, by the transversality assumption we have that the derivative in the normal direction of $g$ induces an injective morphism of bundles
\begin{equation}\label{pullconstruction}
g^*N^*\stackrel{(d_Ng)^*}{\longrightarrow}N'^*
\end{equation}
that allows to identify $g^*N^*$ with a subbundle of $N'^*$, where we are denoting by $N'$ the normal bunlde of $V'$ in $M'$. We can let first $g^*a\in Symb^m_{cl}(g^*N^*,\Omega^1(g^*N^*)\otimes g^*E)$ the induced symbol  by $a$ (which always exists) and extend it by zero to a symbol $a'\in Symb^m_{cl}(N^*,\Omega^1(N'^*)\otimes g^*E)$, we will argue below why the operator $g^*P$ wont depend on this extension.
\item To continue, consider a tubular neighborhood for $V'$ in $M'$, $\phi':U'\to N'$, compatible with $\phi$ in the sense that
$\phi\circ g=d_Ng\circ \phi'$.
\item To finish, consider a smooth cut off function $\chi':M'\to \R$ equal to 1 in $V'$ and zero outside $U'$ compatible with $\chi$, {\it i.e.,} $\chi'=\chi\circ g$.
\end{itemize}
It is immediate to check that the operator on $P(M',V';g^*E)$ associated to the above data does not depend by definition on the extension of the tubular neigborhood, of the symbol and of the cut off function, we denote it by $g^*P$. Also, by construction of $h',a', \phi'$ and  $\chi'$ above it is a direct computation to check the following proposition.

\begin{proposition}[Pullback functoriality]\label{Pullfunct}
The main property of this construction is the functoriality, meaning that for $M''\stackrel{g'}{\rightarrow}M'\stackrel{g}{\rightarrow}M$ satisfying the hypothesis above we have
$$g'^*\circ g^*=(g\circ g')^*.$$
\end{proposition}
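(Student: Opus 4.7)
The plan is to unfold both sides of the claimed identity on the defining data $(h, a, \phi, \chi)$ of a generalized function $P \in P(M, V; E)$, choose the auxiliary data for the two procedures in a compatible way, and invoke the independence of $g^*P$ from the choice of extension (noted just before the statement) to absorb any remaining discrepancy. Functoriality then reduces to verifying that each of the four pieces of structure data composes correctly.

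First I would check the geometric prerequisites. If $g$ is transverse to $V$ and $g'$ to $V' := g^{-1}(V)$, a routine diagram chase using $dg(TV') \subset TV$ shows that $g \circ g'$ is transverse to $V$ with $(g \circ g')^{-1}(V) = (g')^{-1}(V') =: V''$. Moreover, the chain rule restricted to normal directions gives
\[
d_N(g \circ g') \;=\; d_N g \,\circ\, d_{N'} g',
\]
so the conormal injection $(g \circ g')^* N^* \hookrightarrow (N'')^*$ used in the one-step pullback factors through $(g')^*(N')^*$ as exactly the composition of the two injections used in the two-step pullback. In particular $(g')^* g^* N^*$ and $(g \circ g')^* N^*$ coincide as subbundles of $(N'')^*$.

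Next I would match the four pieces of data. Smooth parts compose tautologically: $(g \circ g')^* h = h \circ g \circ g' = (g')^*(g^* h)$. Compatible tubular neighborhoods and cut-offs can be chosen recursively: given $\phi$ and $\chi$, pick $\phi'$ with $\phi \circ g = d_N g \circ \phi'$, then pick $\phi''$ with $\phi' \circ g' = d_{N'} g' \circ \phi''$; the chain rule above automatically yields $\phi \circ (g \circ g') = d_N(g \circ g') \circ \phi''$, the compatibility required for the direct construction, and $\chi'' := \chi \circ g \circ g'$ serves both procedures simultaneously. For the symbol, the two-step procedure produces $a'' \in Symb_{cl}^m((N'')^*)$ that agrees with $(g \circ g')^* a$ on $(g \circ g')^* N^*$ and vanishes outside it (vanishing follows from the two successive extensions-by-zero, using the coincidence of subbundles just noted). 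Hence $a''$ is one valid extension-by-zero of $(g \circ g')^* a$, and the one-step procedure produces another; extension-independence forces the two resulting distributions to agree.

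The expected obstacle is the bookkeeping for normal-direction derivatives, namely verifying intrinsically that $d_N(g \circ g') = d_N g \circ d_{N'} g'$ as maps between the relevant normal bundles rather than merely as restrictions of the full tangent derivative. A short local computation in coordinates adapted to the nested flag $V'' \subset M''$, $V' \subset M'$, $V \subset M$ renders this transparent, after which everything else is routine bookkeeping.
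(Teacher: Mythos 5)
Your proposal is correct and follows essentially the paper's approach: the paper offers no written proof beyond noting that functoriality is "a direct computation" from the construction of the data $(h',a',\phi',\chi')$, and your argument is exactly that computation, matching the four pieces of structure data and using the chain rule $d_N(g\circ g')=d_Ng\circ d_{N'}g'$ together with independence of the choice of symbol extension, tubular neighborhood and cut-off. No gaps.
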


{\bf Pushforward:} Let $p:M'\to M$ be a smooth submersion such that $p$ restricts to a submersion $p:V'\to V$, then there is a functorial construction
$$p!:P(M',V';\Omega^1Ker\,dp\otimes p^*E)\to P(M,V;E)$$
for any $E$ smooth vector bundle over $M$.
Let $P'\in P(M',V';\Omega^1Ker\,dp\otimes p^*E)$, suppose that $P'$ acts as 
$$\langle P',f'\rangle:=\int_{M'}h'(x)f'(x)+(2\pi)^{-q}\int_{N'^*}a'(p'\circ \phi'(x),\xi)\chi'(x)f'(x)e^{-i\langle\phi'(x),\xi\rangle},$$ 
for $f'\in C_c^\infty(M',\Omega^1(TM')\otimes (\Omega^1Ker\,dp\otimes p^*E)^*)$, $h'\in C^\infty(M',\Omega^1Ker\,dp\otimes p^*E)$, $a'\in Symb_{cl}^m(N'^*;\Omega^1N'^*\otimes \Omega^1Ker\,dp\otimes p^*E)$ and $\phi'$ and $\chi'$ as in the definition.
For $f\in C_c^\infty(M,\Omega^1(TM)\otimes E^*)$ we want to explicitly describe, 
$$\langle p!P,f\rangle.$$
As for the pullback case we will describe the corresponding structure data $h,a, \phi$ and  $\chi$ associated to $h', a', \phi'$ and $\chi'$:
\begin{itemize}
\item The section $h$ is obtained by integrating $h'$ along he fibers of $p$, {\it i.e.},  $h(m):=\int_{m'\in p^{-1}(m)}h'(m')$.
\item We have again that $p$ induces an injection 
$N^*\to N'^*$, because $p$ is a submersion. Now, because $p|_{V'}$ is also a submersion and the symbol $a'$ has compact support in the $V'$-direction, we have that
$$a(v,\xi):=\int_{w\in p^{-1}(v)}a'(w,(d_Np)^*(\xi))$$
defines a symbol in $Symb_{cl}^m(N^*;\Omega^1N^*\otimes\otimes E)$
\item Given $\phi':U'\to N'$, since $p|_{U'}$ and $d_Np:N'\to N$ are submersions, we can construct a tubular neighborhood 
$\phi:U\to N$ of $V$ in $M$ compatible with $\phi'$ ({\it i.e.}, $d_Np\circ \phi'=\phi\circ p$), essentially by projecting $\phi'$.
\item Finally, as above, we consider a compatible cut off function $\chi'$.
\end{itemize}
Again, it is an exercise to check that we obtain an operator $p!P$ that does not depend on the way the $\phi$ and $\chi$ are constructed. The following proposition is a direct computation of the definition above:
\begin{proposition}[Pushfoward functoriality]\label{Pushfunct}
The main property of this construction is the functoriality, meaning that for $M''\stackrel{p}{\rightarrow}M'\stackrel{q}{\rightarrow}M$ satisfying the hypothesis above we have
$$q!\circ p!=(q\circ p)!.$$
\end{proposition}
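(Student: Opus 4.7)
The plan is to reduce functoriality to Fubini's theorem applied separately to the smooth part and to the symbol part of the distribution, after having first identified the relevant coefficient bundles.

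The very first step is to make sense of the composition. For $P'' \in P(M'',V'';\Omega^1 Ker\,dp\otimes p^*(\Omega^1 Ker\,dq\otimes q^*E))$, applying $p!$ yields an element of $P(M',V';\Omega^1 Ker\,dq\otimes q^*E)$ to which $q!$ then applies. To compare with $(q\circ p)!$ I would first establish the canonical isomorphism of density bundles over $M''$
\begin{equation*}
\Omega^1 Ker\,dp\otimes p^*\Omega^1 Ker\,dq \;\cong\; \Omega^1 Ker\,d(q\circ p),
\end{equation*}
which comes from the short exact sequence $0\to Ker\,dp\to Ker\,d(q\circ p)\to p^*Ker\,dq\to 0$ (valid because $p$ is a submersion). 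Combined with $p^*q^*E\cong (q\circ p)^*E$ this identifies the source spaces of $q!\circ p!$ and $(q\circ p)!$ tautologically.

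Next I would fix compatible choices: a tubular neighborhood $\phi'':U''\to N''$ of $V''$ in $M''$, a tubular neighborhood $\phi':U'\to N'$ of $V'$ in $M'$, and $\phi:U\to N$ of $V$ in $M$, arranged so that $d_{N'}p\circ \phi''=\phi'\circ p$ and $d_N q\circ \phi'=\phi\circ q$; similarly cut-off functions $\chi''=\chi'\circ p=\chi\circ q\circ p$. This is possible by the argument justifying pushforward (projecting tubular neighborhoods along submersions), and since Proposition \ref{Pushfunct} already guarantees independence of $p!$ and $q!$ on these choices, it is enough to verify the identity $q!\circ p!=(q\circ p)!$ for one such coherent system. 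With these choices fixed, the constructions in the definition of $p!$ and $q!$ can be composed explicitly.

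The core computation then splits into two parallel Fubini arguments. On the smooth part, if $P''$ has regular piece $h''$, then $p!$ gives $h'(m')=\int_{p^{-1}(m')}h''$ and $q!$ gives $h(m)=\int_{q^{-1}(m)}h'$, so by Fubini along the fibers of $q\circ p$ one obtains $h(m)=\int_{(q\circ p)^{-1}(m)}h''$, which is precisely the smooth part produced by $(q\circ p)!$. On the symbol part, the injections $N^*\hookrightarrow N'^*\hookrightarrow N''^*$ induced by the normal derivatives satisfy $(d_{N'}p)^*\circ (d_N q)^*=(d_N(q\circ p))^*$, so iterating the fiber integration of symbols gives
\begin{equation*}
a(v,\xi)=\int_{w'\in q^{-1}(v)}\int_{w''\in p^{-1}(w')}a''\bigl(w'',(d_N(q\circ p))^*\xi\bigr)=\int_{w''\in(q\circ p)^{-1}(v)}a''\bigl(w'',(d_N(q\circ p))^*\xi\bigr),
\end{equation*}
again by Fubini, matching the symbol produced by $(q\circ p)!$. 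Since the tubular neighborhoods and cut-offs were chosen compatibly, the resulting oscillatory integrals coincide as distributions on $M$.

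The only genuine difficulty is bookkeeping with densities: one must check that the Fubini identifications for functions and for symbols are compatible with the canonical isomorphism of vertical density bundles coming from the short exact sequence above, including the $p^*\Omega^1 N'^*$–versus–$\Omega^1 N^*$ pairing entering the symbol transform. This is a linear-algebra verification fiber by fiber, using that for a short exact sequence $0\to A\to B\to C\to 0$ of finite dimensional vector spaces the induced isomorphism $\Omega^1 A\otimes \Omega^1 C\cong \Omega^1 B$ is natural and compatible with integration, which is standard. Once this bookkeeping is in place, the two computations above combine to give $q!\circ p!=(q\circ p)!$.
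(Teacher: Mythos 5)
Your proof is correct and takes essentially the approach the paper intends: the paper dismisses the proposition as ``a direct computation of the definition above,'' and your verification that the composed structure data $(h,a,\phi,\chi)$ agree --- Fubini on the smooth part and on the fiber-integrated symbols, together with the canonical identification $\Omega^1 Ker\,dp\otimes p^*\Omega^1 Ker\,dq\cong\Omega^1 Ker\,d(q\circ p)$ --- is exactly that computation written out. One small correction: the independence of $p!$ and $q!$ from the choices of tubular neighborhood and cut-off is part of the well-definedness of the pushforward construction itself (stated just before the proposition), not a consequence of Proposition \ref{Pushfunct}, so you should invoke that well-definedness rather than the proposition you are proving.
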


{\bf Compatibility between the pullback and the pushfoward:} We will prove below a compatibility result result between the two operations we have just defined. 
We enounce the proposition.

\begin{proposition}[Pullback vs. Pushforward]\label{PullPush}
Given a commutative diagram
\begin{equation}\label{pullpushmaps}
\xymatrix{
X'\ar[r]^-g\ar[d]_-q&M'\ar[d]^-p\\
X\ar[r]_-f&M
}
\end{equation}
of smooth maps with $f,g$ satisfying the hypothesis of proposition \ref{Pullfunct} and $p,q$ satisfying the hypothesis of proposition \ref{Pushfunct}, the following diagram is commutative
\begin{equation}\label{pullpushfunct}
\xymatrix{
P^*_c(M',V';p^*E\otimes \Omega^1Ker\, dp)\ar[r]^-{g^*}\ar[d]_-{p!}&
P^*_c(X',g^{-1}(V');g^*(p^*E\otimes \Omega^1Ker\, dp))\ar[d]^-{q!}\\
P^*_c(M,V;E)\ar[r]_-{f^*}&P^*_c(X,f^{-1}(V);f^*E)
}
\end{equation}
That is, $f^*\circ p!=q!\circ g^*$.
\end{proposition}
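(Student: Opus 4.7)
The plan is to establish the identity $f^{*}\circ p!=q!\circ g^{*}$ on the space $P_c^m(M',V';p^{*}E\otimes \Omega^{1}Ker\, dp)$ by evaluating both sides on an arbitrary test section $\varphi\in C_c^\infty(X,\Omega^{1}(TX)\otimes f^{*}E^{*})$ and comparing the resulting distributions. The key simplification is that, by the very definition of the Androulidakis-Skandalis space, any $P'\in P_c^m(M',V';p^{*}E\otimes \Omega^{1}Ker\, dp)$ is built (non-canonically) from a \emph{smooth} piece $h'\in C^\infty(M';p^{*}E\otimes \Omega^{1}Ker\, dp)$ and an \emph{oscillatory} piece determined by a classical symbol $a'$ together with auxiliary tubular neighborhood $\phi'$ and cutoff $\chi'$. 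Both $f^{*}\circ p!$ and $q!\circ g^{*}$ are linear in these data and the two pieces do not mix under pullback or pushforward, so it suffices to check the identity on each type of contribution separately.

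For the smooth part, the argument is Fubini applied to fiber integration: $p!$ integrates $h'$ along the fibers of $p$ and $f^{*}$ restricts the result, whereas $g^{*}$ first restricts $h'$ along $g$ and $q!$ then integrates along fibers of $q$. The commutativity of diagram \eqref{pullpushmaps}, combined with the submersion hypotheses on $p,q$ and the transversality hypotheses on $f,g$, identifies the fibers $q^{-1}(x)\simeq p^{-1}(f(x))$ through $g$ and makes the induced density bundles along $Ker\, dp$ and $Ker\, dq$ match under the restriction $g_{*}:Ker\, dq\to Ker\, dp$; classical Fubini then concludes. For the oscillatory piece the same pattern repeats at the level of classical symbols on the cotangent bundles of the normal bundles: the bundle injection \eqref{pullconstruction} and the fiber integration entering the definition of $p!$ and $q!$ on symbols are themselves related by the induced commutative square of conormal bundles, and applying Fubini to the pair of oscillatory integrals yields the required equality.

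To make this rigorous one first invokes the independence-of-choices statements built into the well-definedness of the individual operations, together with Propositions \ref{Pullfunct} and \ref{Pushfunct}, in order to work with a coherent system of auxiliary data at all four corners of the square: choose $\phi'$ on $M'$, push it forward along $p$ to obtain $\phi$ on $M$, pull back $\phi$ along $f$ to obtain $\psi$ on $X$, and finally pick $\psi'$ on $X'$ simultaneously compatible with $\phi'$ via $g$ and with $\psi$ via $q$; analogously align the cutoff functions and the extensions-by-zero of the symbols around the square. The main obstacle is precisely the existence of this coherent choice: it rests on the geometric fact that the induced diagram of normal bundles to $V$, $V'$, $f^{-1}(V)$ and $g^{-1}(V')$ inherits the submersion/transversality hypotheses from \eqref{pullpushmaps} (the cleanest sufficient condition being that the square is a fibered product of manifolds), which one verifies by an infinitesimal computation along the fibers using the chain rule. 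Once the auxiliary data are chosen coherently, both $(f^{*}\circ p!)(P')$ and $(q!\circ g^{*})(P')$ are expressed by the same oscillatory integral, and the equality is immediate.
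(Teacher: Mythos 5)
Your proposal is correct and takes essentially the same route as the paper: both reduce the identity to the compatibility of the structure data (the smooth part $h$, the symbol $a$, and coherently chosen tubular neighborhoods and cutoffs), comparing the two orders of fiber integration and restriction via the commutative square. The only noteworthy difference is that you make explicit the point the paper leaves implicit, namely that the bare commutativity of \eqref{pullpushmaps} must be supplemented by the identification of the fibers of $q$ with those of $p$ (and of $\Omega^1 Ker\,dq$ with $g^*\Omega^1 Ker\,dp$), e.g.\ by requiring the square to be a fibered product, as it indeed is in the paper's applications.
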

\begin{proof}
Essentially it resumes to check that the structure data $(a,h,\phi,\chi)$ used to construct an operator satisfies the above compatibility property. Let us check this first for a section $h\in C_c^\infty(M',p^*E\otimes \Omega^1\,Ker\,dp)$, by applying the first the pullback construction for $g$ an then pushforward construction for $q$, to $h$ corresponds the section in $C_c^\infty(X,f^*E)$ given explicitly by
\begin{equation}\label{hpullpush}
x\mapsto \int_{x'\in q^{-1}(x)}(h\circ g)(x'),
\end{equation}
in the other hand, by applying first the pushforward construction for $p$ an then pullback construction for $f$, to $h$ corresponds the section in $C_c^\infty(X,f^*E)$ given explicitly by
\begin{equation}\label{hpushpull}
x\mapsto \int_{m'\in p^{-1}(f(x))}h(m').
\end{equation} 
The diagram (\ref{pullpushmaps}) being commutative implies immediately that these two sections coincide.
For the symbol $a$, one has, by applying the first the pullback construction for $g$ an then pushforward construction for $q$, to $a$ corresponds the symbol in $Symb_{cl}^*(N^*(X,f^{-1}(V));\Omega^1(N^*(X,f^{-1}(V)))\otimes f^*E)$ given explicitly by
\begin{equation}\label{apullpush}
(x,\xi)\mapsto \int_{x'\in q^{-1}(x)}\widetilde{a}(x',(d_Nq)^*(\xi)),
\end{equation}
where $\widetilde{a}$ is the extension of $g^*(a)$ to $N^*(X',g^{-1}(V))$. In the other hand, by applying first the pushforward construction for $p$ an then pullback construction for $f$, to $a$ corresponds the symbol in $Symb_{cl}^*(N^*(X,f^{-1}(V));\Omega^1(N^*(X,f^{-1}(V)))\otimes f^*E)$ given as the extension of
\begin{equation}\label{apushpull}
(m,\eta)\mapsto \int_{m'\in p^{-1}(m)}a(m',(d_Np)^*(\eta)),
\end{equation} 
which is a symbol on $N^*(M,V)$, to $N^*(X,f^{-1}(V))$, using the transverse map $f$ as (\ref{pullconstruction}) above. The commutativity of the diagram (\ref{pullpushmaps}) implies again these two symbols are the same. 
Finally, tubular neighborhoods, and cut off functions, can be easily constructed in order to have the required compatibility.
\end{proof}

{\bf External Product:} Consider, for $i=1,2$, a couple of submersions $M\stackrel{p_i}{\longrightarrow}M_i$ together with a couple of submanifolds $V_i\subset M_i$ satisfying the condition for the pullback construction above. The following proposition is stated and proven in \cite{ASpdo} proposition 1.10.
\begin{proposition}[External Product]\label{ASproduct}
Let, for $i=1,2$, $K_i\in P_c^{m_i}(M_i,V_i;E_i)$. Consider $Q_i:=p_i^*K_i\in P_c^{m_i}(M,W_i;p_i^*(E_i))$ where $W_i:=p_i^{-1}(V_i)$. We have
\begin{itemize}
\item[(a)] The product $Q_1\cdot Q_2$ makes sense as a distribution acting on $C_c^\infty(M,\Omega^1M\otimes E^*)$ where $E=p_1^*(E_1)\otimes p_2^*(E_2)$.
\item[(b)] Suppose besides there is a submersion $m:M\to N$ strictly transverse to both $W_1$ and $W_2$ and with 
$p_1^*(E_1)\otimes p_2^*(E_2)=m^*(F)\otimes \Omega^1Ker\, dm$ for some bundle $F$, then we can perform the pushforward map construction to obtain a pseudodifferential distribution
$$m!(Q_1\cdot Q_2)\in P_c^{m_1+m_2}(N,m(W_1\cap W_2);F).$$
\end{itemize}
\end{proposition}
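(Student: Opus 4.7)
The plan is to tackle (a) and (b) in sequence, using Hörmander's microlocal machinery for conormal distributions together with the functoriality results already established.

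For part (a), I would first observe that the pullbacks $Q_i = p_i^* K_i$ are conormal along $W_i := p_i^{-1}(V_i)$ with wavefront sets contained in $N^*(W_i)\setminus 0$. The crucial geometric input is that $W_1$ and $W_2$ must meet transversally in $M$: this should follow from the submersion hypotheses on $p_1$ and $p_2$ (in effect one needs $(p_1,p_2)\colon M \to M_1\times M_2$ to be suitably transverse, so that $T_xW_1 + T_xW_2 = T_xM$ at every $x\in W_1\cap W_2$). Dualizing this transversality yields $N^*W_1 \cap (-N^*W_2) = 0$ away from the zero section, and Hörmander's theorem on products of distributions then produces a well-defined distribution $Q_1 \cdot Q_2$ on $M$.

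The next step is to verify that $Q_1\cdot Q_2$ is itself a conormal distribution singular along $W_1\cap W_2$, by carrying the product out in coordinates adapted to the transverse splitting $N^*(W_1\cap W_2) = N^*W_1|_{W_1\cap W_2} \oplus N^*W_2|_{W_1\cap W_2}$. In such coordinates, each $Q_i$ is realized as an oscillatory integral with symbol $p_i^*a_i$ and phase linear in the conormal variable of $W_i$; the product of phases is then a non-degenerate linear phase for the full conormal bundle of $W_1\cap W_2$, with amplitude $p_1^*(a_1)\otimes p_2^*(a_2)$. Tracking the symbol and density conventions set up at the start of Section~\ref{ASsection}, one reads off that the product is of conormal order $m_1 + m_2$.

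For the final step of (b), I would apply the pushforward construction of Proposition~\ref{Pushfunct} to the conormal distribution just produced. The hypothesis of strict transversality of $m$ to both $W_1$ and $W_2$ forces $m|_{W_1\cap W_2}$ to be a submersion onto its image $m(W_1\cap W_2)$, which is precisely the condition required by Proposition~\ref{Pushfunct}; the bundle identification $p_1^*E_1 \otimes p_2^*E_2 = m^*F \otimes \Omega^1\,\mathrm{Ker}\, dm$ then aligns the coefficient bundle with the density correction that $m!$ demands, and the result lies in $P_c^{m_1+m_2}(N, m(W_1\cap W_2); F)$ as claimed. I expect the main obstacle to be the bookkeeping: keeping consistent track of orientations, half-density factors, and symbol orders through pullback, multiplication, and pushforward, since a wrong normalization at any stage disturbs the final conormal order. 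The geometric core --- transversality of $W_1, W_2$ together with the resulting splitting of their joint conormal bundle --- is what makes the conormality of the product transparent, while the functoriality already established in Propositions~\ref{Pullfunct} and~\ref{Pushfunct} packages the analytic estimates.
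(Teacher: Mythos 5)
Your part (a) is essentially fine (Hörmander's criterion on wavefront sets does give the existence of the product once $W_1$ and $W_2$ meet transversally), but the pivot of your argument for (b) contains a genuine error. You claim that $Q_1\cdot Q_2$ is itself a conormal distribution along $W_1\cap W_2$ of order $m_1+m_2$, and then feed it into Proposition \ref{Pushfunct}. This intermediate claim is false in general: in adapted coordinates $(u,v,w)$ with $W_1=\{u=0\}$ and $W_2=\{v=0\}$, the product is an oscillatory integral with amplitude $a_1(v,w,\xi)\,a_2(u,w,\eta)$, which satisfies only product-type estimates $(1+|\xi|)^{m_1}(1+|\eta|)^{m_2}$ and is not a classical symbol of order $m_1+m_2$ in the joint variable $(\xi,\eta)$; worse, the product is singular along all of $W_1\cup W_2$, not only along the intersection. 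For instance, if $a_1\equiv 1$, so $Q_1$ is the Dirac layer on $W_1$, then $Q_1\cdot Q_2$ is non-smooth at every point of $W_1$ where $Q_2\neq 0$, so it cannot belong to $P_c^{m_1+m_2}(M,W_1\cap W_2;E)$, whose elements are smooth off $W_1\cap W_2$. This is precisely why item (a) of the statement only asserts existence of the product as a distribution. Consequently Proposition \ref{Pushfunct}, which is defined only on the classes $P_c^{*}(\,\cdot\,,\,\cdot\,;\,\cdot\,)$, cannot be invoked; the pushforward step needs its own computation: one integrates the oscillatory integral above over the fibres of $m$ and uses the strict transversality of $\ker dm$ to both $W_1$ and $W_2$ to see, first, that fibre integration wipes out the singularities along $(W_1\cup W_2)\setminus(W_1\cap W_2)$ and, second, that the resulting amplitude is a genuine classical symbol of order $m_1+m_2$ on $N^*\bigl(m(W_1\cap W_2)\bigr)$ (the twisted analogue of the classical fact that composing kernels of orders $m_1$ and $m_2$ yields a symbol of order $m_1+m_2$). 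That computation is the actual content of the proposition and is missing from your sketch.

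Two further remarks. First, the transversality of $W_1$ and $W_2$ does not follow from $p_1,p_2$ being submersions, as you suggest: take $M_1=M_2=M$, $p_1=p_2=\mathrm{id}$ and $V_1=V_2$. It is an additional hypothesis, which does hold in the application of section \ref{pdogrpds}, where $W_1,W_2$ are the preimages of the unit sets under the two projections of the fibred product $\gr_i^j\,_{t_j}\!\times_{s_j}\gr_j^k$; if you keep the wavefront-set route you should state it explicitly. Second, be aware that the paper does not prove Proposition \ref{ASproduct} at all: it is quoted from \cite{ASpdo} (Proposition 1.10), where the proof proceeds by exactly the local oscillatory-integral and fibre-integration computation described above rather than by reduction to the pullback/pushforward functoriality statements.
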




\section{Projective pseudodifferential calculus for Lie groupoids}\label{pdogrpds}

Let $\gr\rightrightarrows M$ be a Lie groupoid and let $\alpha$ be a twisting. Consider a cocycle
$$\alpha_\Omega\to PU(H)$$
associated to an open cover $\Omega=\{\Omega_i\}_{i\in M}$ of $M$
representing $\alpha$.
For every $(i,j)\in I^2$ there is a bisubmersion
$(\gr_i^j,s,t)$, and in fact $\{(\gr_i^j,s,t)\}_{(i,j)}$ is an atlas of bisubmersions adapted to $\{(\gr,s,t)\}$. For each $(i,j)\in I^2$ we have the line bundle $L_{ij}\to \gr_i^j$ associated to $\alpha$, we consider the space of compactly supported generalized 
sections on $\gr_i^j$ with pseudodifferential singularities on $\Omega_{ij}$ of order $m$, denoted by $$P_c^m(\gr_i^j,\Omega_{ij};\Omega^{\frac{1}{2}}\otimes L_{ij}),$$
where by $\Omega^{\frac{1}{2}}$ we mean the bundle of half densities over $\gr_i^j$,
$$\Omega^{\frac{1}{2}}:=\Omega^{\frac{1}{2}}(Ker\,ds_j\oplus Ker\, dt_i),$$
where $s_j:\gr_i^j\to \Omega_j$ and $t_i:\gr_i^j\to \Omega_i$ are the source and target maps respectively. We wont add $i,j$ to the notation for $\Omega^{\frac{1}{2}}$ since it will be clear from each particular context.
Using the Fell bundle structure of the line bundle $L$ over $\gr_\Omega$ we will define a product
\begin{equation}
\xymatrix{
&P_c^m(\gr_i^j,\Omega_{ij};\Omega^{\frac{1}{2}}\otimes L_{ij})\times P_c^n(\gr_j^k,\Omega_{jk};\Omega^{\frac{1}{2}}\otimes L_{jk})\ar[d]&\\
&P_c^{m+n}(\gr_i^k,\Omega_{ik};\Omega^{\frac{1}{2}}\otimes L_{ik})&
}
\end{equation}
We need the following technical lemma which is essentially proved in \cite{LMV} lemma 12.
\begin{lemma}\label{LMVlemma}
Let $i,j,k\in I$ such that $\gr_i^j \,_{t_j}\times_{s_j}\gr_j^k$ is not empty. Consider the projections $p_1:\gr_i^j \,_{t_j}\times_{s_j}\gr_j^k\to \gr_i^j$ and 
$p_2:\gr_i^j \,_{t_j}\times_{s_j}\gr_j^k\to \gr_j^k$ in the first and in the second coordinate. Let $m:\gr_i^j \,_{t_j}\times_{s_j}\gr_j^k\to \gr_i^k$ be the restriction of the groupoid product. We have an isomorphism 
\begin{equation}
p_1^*(\Omega^{\frac{1}{2}}\otimes L_{ij})\otimes p_2^*(\Omega^{\frac{1}{2}}\otimes L_{jk})\cong \Omega^1Ker\,dm\otimes m^*(\Omega^{\frac{1}{2}}\otimes L_{ik})
\end{equation}
\end{lemma}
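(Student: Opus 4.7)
The plan is to split the claimed isomorphism into two independent factors — the twisted (Fell line bundle) part and the half-density part — and then recombine them by tensor product. Rewriting the left-hand side as
\begin{equation*}
\bigl(p_1^*\Omega^{\frac{1}{2}}\otimes p_2^*\Omega^{\frac{1}{2}}\bigr)\otimes\bigl(p_1^*L_{ij}\otimes p_2^*L_{jk}\bigr),
\end{equation*}
the Fell bundle property (\ref{Fellproperty}), applied to any composable pair $(g,h)\in \gr_i^j\,_{t_j}\times_{s_j}\gr_j^k$, immediately yields the first factor isomorphism $p_1^*L_{ij}\otimes p_2^*L_{jk}\cong m^*L_{ik}$. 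Thus the twisted factor is completely handled by the very definition of the Fell structure on $L\to\gr_\Omega$ coming from the $S^1$-central extension $R_\alpha$.

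It remains to produce the untwisted density isomorphism
\begin{equation*}
p_1^*\Omega^{\frac{1}{2}}(Ker\,ds_j\oplus Ker\,dt_i)\otimes p_2^*\Omega^{\frac{1}{2}}(Ker\,ds_k\oplus Ker\,dt_j)\cong \Omega^1Ker\,dm\otimes m^*\Omega^{\frac{1}{2}}(Ker\,ds_k\oplus Ker\,dt_i).
\end{equation*}
This is exactly Lemma 12 of \cite{LMV} transcribed to our indexed-cover situation: pointwise at $(g,h)$ with $\gamma=gh\in\gr_i^k$, right translation by $h$ identifies $(Ker\,ds_j)_g\cong (Ker\,ds_k)_{gh}$, left translation by $g$ identifies $(Ker\,dt_j)_h\cong (Ker\,dt_i)_{gh}$, while the short exact sequence
\begin{equation*}
0\to Ker\,dm_{(g,h)}\to T_{(g,h)}(\gr_i^j\,_{t_j}\!\times_{s_j}\gr_j^k)\xrightarrow{dm} T_{gh}\gr_i^k\to 0
\end{equation*}
accounts, via the standard density-of-a-short-exact-sequence identification, for the additional factor $\Omega^1Ker\,dm$. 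Tensoring this with the Fell bundle isomorphism from the previous paragraph produces the desired result.

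The only delicate point is the bookkeeping of the canonical tangent-space identifications at $(g,h)$: one has to make sure that the right- and left-translation isomorphisms used to move $Ker\,ds_j$ and $Ker\,dt_j$ up to $\gr_i^k$ are compatible with the splittings of $T_{(g,h)}(\gr_i^j\,_{t_j}\!\times_{s_j}\gr_j^k)$ induced by $p_1$, $p_2$ and $m$, so that the two density factors on either side match canonically (and not only up to a non-canonical constant). This is precisely the computation carried out in the cited LMV lemma, and, combined with (\ref{Fellproperty}), no further input is required; the rest of the argument is purely a functoriality of pullback and tensor product on line/density bundles.
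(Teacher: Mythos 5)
Your proposal is correct and follows essentially the same route as the paper's proof: split off the line bundle factor and handle it with the Fell property (\ref{Fellproperty}), then invoke Lemma 12 of \cite{LMV} for the half-density factor and tensor the two isomorphisms. The extra pointwise sketch of the LMV identifications (translations and the short exact sequence for $Ker\,dm$) is fine but not needed beyond the citation, which is all the paper uses.
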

\begin{proof}
In fact we have 
\begin{equation}
p_1^*(\Omega^{\frac{1}{2}}\otimes L_{ij})\otimes p_2^*(\Omega^{\frac{1}{2}}\otimes L_{jk})\cong p_1^*(\Omega^{\frac{1}{2}})\otimes p_2^*(\Omega^{\frac{1}{2}})\otimes (p_1^*(L_{ij})\otimes p_2^*(L_{jk}))
\end{equation}
and by lemma 12 in \cite{LMV} one has
\begin{equation}
p_1^*(\Omega^{\frac{1}{2}})\otimes p_2^*(\Omega^{\frac{1}{2}})\cong \Omega^1Ker\,dm\otimes m^*(\Omega^{\frac{1}{2}})
\end{equation}
and so we conclude by using the Fell property (\ref{Fellproperty}):
\begin{equation}
p_1^*(L_{ij})\otimes p_2^*(L_{jk})\cong m^*(L_{ik}).
\end{equation}
\end{proof}

We can now state the product announced above:

Let $K_{ij}\in P_c^m(\gr_i^j,\Omega_{ij};\Omega^{\frac{1}{2}}\otimes L_{ij})$ and $K_{jk}\in P_c^n(\gr_j^k,\Omega_{jk};\Omega^{\frac{1}{2}}\otimes L_{jk})$. By proposition \ref{ASproduct} (a) the product
$$p_1^*(K_{ij})\cdot p_2^*(K_{jk})$$ makes sense as a distribution acting on  
$$C_c^\infty(\gr_i^j\,_{t_j}\times_{s_j}\gr_j^k,\Omega^1(\gr_i^j\,_{t_j}\times_{s_j}\gr_j^k)\otimes (p_1^*(\Omega^{\frac{1}{2}}\otimes L_{ij})\otimes p_2^*(\Omega^{\frac{1}{2}}\otimes L_{jk}))^*)$$
which is, after the lemma above, isomorphic to 
$$C_c^\infty(\gr_i^j\,_{t_j}\times_{s_j}\gr_j^k,\Omega^1(\gr_i^j\,_{t_j}\times_{s_j}\gr_j^k)\otimes(\Omega^1Ker\,dm\otimes m^*(\Omega^{\frac{1}{2}}\otimes L_{ik}))^*).$$ 
We can now apply the external product construction \ref{ASproduct} (b) to define 
\begin{equation}\label{twistdistproduct}
K_{ij}*K_{jk}:=m!(p_1^*(K_{ij})\cdot p_2^*(K_{jk}))\in P_c^{m+n}(\gr_i^k,\Omega_{ik};\Omega^{\frac{1}{2}}\otimes L_{ik})
\end{equation}

We will also need to introduce an involution on the algebra of pseudodifferential operators below. For this, denote by $\iota:\gr_\Omega\rightarrow \gr_\Omega$ the Lie groupoid inversion.
Let $K_{ij}\in P_c^m(\gr_i^j,\Omega_{ij};\Omega^{\frac{1}{2}}\otimes L_{ij})$, since $\iota^*(L_{ji}\otimes \Omega^{\frac{1}{2}})\cong L_{ij}\otimes \Omega^{\frac{1}{2}}$ we have that $\iota!$ sends $P_c^m(\gr_i^j,\Omega_{ij};\Omega^{\frac{1}{2}}\otimes L_{ij})$ in $P_c^m(\gr_j^i,\Omega_{ji};\Omega^{\frac{1}{2}}\otimes L_{ji})$, we let
\begin{equation}\label{twistdistinvolution}
K_{ij}^*:=\overline{\iota!(K_{ij})}.
\end{equation}

The following is one of the main results of this paper.
\begin{theorem}[Algebra of twisted distributions]\label{pdoalg}
Let us denote, for each $m\in \mathbb{Z}$, $$P_c^m(G,\alpha_\Omega):=
\bigoplus_{(i,j)}P_c^m(\gr_i^j,\Omega_{ij};\Omega^{\frac{1}{2}}\otimes L_{ij}).$$
With the product (\ref{twistdistproduct}) and the involution (\ref{twistdistinvolution}) described above, the union $$P_c^\infty(\gr,\alpha_\Omega)=\bigcup_{m\in \mathbb{Z}}P_c^m(\gr,\alpha_\Omega)$$
forms a filtered $*-$algebra with bilateral ideal 
$$P_c^{-\infty}(\gr,\alpha_\Omega)=\bigcap_{m\in \mathbb{Z}}P_c^m(\gr,\alpha_\Omega).$$
\end{theorem}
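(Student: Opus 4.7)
The plan is to verify, in order: well-definedness of the product, the filtration property, associativity, the involution axioms, and the ideal property of $P_c^{-\infty}$. For well-definedness, since $s$ and $t$ are submersions, the fiber product $\gr_i^j \,_{t_j}\times_{s_j} \gr_j^k$ is a manifold and the projections $p_1,p_2$ are submersions, so the transversality hypotheses of Proposition \ref{Pullfunct} are met and the pullbacks $p_1^*K_{ij}$, $p_2^*K_{jk}$ lie in the expected spaces. Proposition \ref{ASproduct}(a) then produces the distributional product, and Lemma \ref{LMVlemma} supplies exactly the bundle identification needed for the pushforward along $m$ to be licit via Proposition \ref{ASproduct}(b); the output lies in $P_c^{m+n}(\gr_i^k,\Omega_{ik};\Omega^{\frac{1}{2}}\otimes L_{ik})$. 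The filtration claim is immediate from the fact that pullback and pushforward preserve the pseudodifferential order, and the external product adds orders.

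The heart of the argument is associativity. Given $K_{ij}\in P_c^m$, $K_{jk}\in P_c^n$, $K_{k\ell}\in P_c^p$, consider the manifold of composable triples
\[
T:=\gr_i^j \,_{t_j}\times_{s_j}\gr_j^k \,_{t_k}\times_{s_k}\gr_k^\ell,
\]
with the three projections $\pi_1,\pi_2,\pi_3$, the two partial multiplications, and the total multiplication $m_3:T\to\gr_i^\ell$. The plan is to show that both $(K_{ij}*K_{jk})*K_{k\ell}$ and $K_{ij}*(K_{jk}*K_{k\ell})$ equal
\[
m_3!\bigl(\pi_1^*K_{ij}\cdot\pi_2^*K_{jk}\cdot\pi_3^*K_{k\ell}\bigr).
\]
This reduces to four ingredients: (i) functoriality of pullback (Proposition \ref{Pullfunct}) to re-express $\pi_\ell^*$ as a composite of pullbacks through the intermediate fiber product; (ii) the Pullback--Pushforward compatibility (Proposition \ref{PullPush}) applied to the Cartesian square comparing the intermediate pushforward by one partial multiplication with the pullback along the remaining projection; (iii) functoriality of pushforward (Proposition \ref{Pushfunct}) to collapse the two successive pushforwards into $m_3!$; and (iv) the Fell-bundle associativity, which under the isomorphism of Lemma \ref{LMVlemma} ensures that the line-bundle identifications obtained on the left and on the right agree with the triple identification $p_1^*L_{ij}\otimes p_2^*L_{jk}\otimes p_3^*L_{k\ell}\cong m_3^*L_{i\ell}$ induced by the cocycle condition on $\alpha_\Omega$. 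This last point is the main obstacle, because the isomorphisms in Lemma \ref{LMVlemma} intertwine half-density bundles, normal bundles $\mathrm{Ker}\,dm$, and Fell lines simultaneously; one must check that the coherence diagram expressing the two resulting identifications $\pi_1^*(\Omega^{\frac{1}{2}}\otimes L_{ij})\otimes\pi_2^*(\Omega^{\frac{1}{2}}\otimes L_{jk})\otimes\pi_3^*(\Omega^{\frac{1}{2}}\otimes L_{k\ell})\cong \Omega^1\mathrm{Ker}\,dm_3\otimes m_3^*(\Omega^{\frac{1}{2}}\otimes L_{i\ell})$ commutes, which rests on the associativity of the Fell product \eqref{Fellproperty} applied fiberwise to composable triples.

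For the involution, writing $\iota$ for the groupoid inversion and using that $\iota^2=\mathrm{id}$ together with pushforward functoriality (Proposition \ref{Pushfunct}), one has $K^{**}=\overline{\iota!(\overline{\iota!(K)})}=(\iota\circ\iota)!(K)=K$. Anti-multiplicativity $(K_{ij}*K_{jk})^*=K_{jk}^**K_{ij}^*$ is obtained from the groupoid identity $\iota\circ m=m^{\mathrm{op}}\circ(\iota\times\iota)\circ\tau$ (where $\tau$ swaps factors and $m^{\mathrm{op}}$ is the multiplication in reverse order) combined once more with the Pullback--Pushforward compatibility of Proposition \ref{PullPush} and the conjugate-linearity of the bar operation; the Fell product is anti-exchanged correctly because $\iota^*L_{ji}\cong L_{ij}$ under the canonical identification encoded in the cocycle.

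Finally, the ideal property of $P_c^{-\infty}$ is immediate: a section with pseudodifferential singularity of order $-\infty$ on $\Omega_{ij}$ is smooth, its pullback by a submersion is smooth, the product by any distribution is well-defined as a smooth multiplier, and the pushforward by the submersion $m$ yields a smooth compactly supported section; hence $P_c^{-\infty}(\gr,\alpha_\Omega)\cong C_c^\infty(\gr,\alpha_\Omega)$ absorbs the full algebra on both sides.
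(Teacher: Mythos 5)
Your proposal is correct and follows essentially the same route as the paper: well-definedness via Proposition \ref{ASproduct} and Lemma \ref{LMVlemma}, and associativity by reducing both parenthesizations to the pushforward along the total multiplication of the triple product $\pi_1^*K_{ij}\cdot\pi_2^*K_{jk}\cdot\pi_3^*K_{k\ell}$, using exactly the pullback functoriality, pushforward functoriality, and pullback--pushforward compatibility (Propositions \ref{Pullfunct}, \ref{Pushfunct}, \ref{PullPush}) applied to the same Cartesian square. Your added attention to the Fell-bundle coherence, the involution axioms, and the identification $P_c^{-\infty}\cong C_c^\infty(\gr,\alpha_\Omega)$ fills in details the paper treats as immediate, but does not change the argument.
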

\begin{proof}
We will concentre in the proof of the associativity of the product, the rest of the properties being immediate. As we remarked above, associativity is not a direct consequence of the external product associativity. Let $(i,j,k,l)\in I^4$ such that $\gr_i^j\,_{t_j}\times_{s_j} \gr_j^k\,_{t_k}\times_{s_k} \gr_k^l\neq \emptyset$ and consider $P_{ij}\in P_c^*(\gr_i^j,\Omega_{ij};\Omega^{\frac{1}{2}}\otimes L_{ij}), P_{jk}\in P_c^*(\gr_j^k,\Omega_{jk};\Omega^{\frac{1}{2}}\otimes L_{jk}, P_{kl}\in P_c^*(\gr_k^l,\Omega_{kl};\Omega^{\frac{1}{2}}\otimes L_{kl})$. We will prove that
$$(P_{ij}*P_{jk})*P_{kl}=P_{ij}*(P_{jk}*P_{kl})$$
by using propositions \ref{Pullfunct}, \ref{Pushfunct} and \ref{PullPush} above.

By definition
\begin{equation}\label{eq1asso}
\begin{array}{ccc}
(P_{ij}*P_{jk})*P_{kl}&=&m!(p_1^*(P_{ij}*P_{jk})\cdot p_2^*(P_{kl}))\\
&=&m!(p_1^*(m!(p_1^*(P_{ij})\cdot p_2^*(P_{jk})))\cdot p_2^*(P_{kl}))\\
&=&m!((p_1^*\circ m!)(p_1^*(P_{ij})\cdot p_2^*(P_{jk}))\cdot p_2^*(P_{kl})).
\end{array}
\end{equation}
Now, consider the following commutative diagram
\begin{equation}
\xymatrix{
\gr_i^j\,_{t_j}\times_{s_j}\gr_j^k\,_{t_k}\times_{s_k}\gr_k^l
\ar[d]_-{(p_1,p_2)}\ar[rr]^-{(m,1)}&&\gr_i^k\,_{t_k}\times_{s_k}\gr_k^l\ar[d]^-{p_1}\\
\gr_i^j\,_{t_j}\times_{s_j}\gr_j^k\ar[rr]_-{m}&&\gr_i^k
}
\end{equation}
which satisfies the conditions of proposition \ref{PullPush}. In particular,
$$p_1^*\circ m!=(m,1)!\circ (p_1,p_2)^*,$$
and so, from (\ref{eq1asso}),
\begin{equation}\label{eq2asso}
(P_{ij}*P_{jk})*P_{kl}=m!(((m,1)!\circ (p_1,p_2)^*)(p_1^*(P_{ij})\cdot p_2^*(P_{jk}))\cdot p_2^*(P_{kl}))
\end{equation}
Now, by proposition \ref{Pullfunct} applied to 
\begin{equation}
\gr_i^j\,_{t_j}\times_{s_j}\gr_j^k\,_{t_k}\times_{s_k}\gr_k^l
\stackrel{(p_1,p_2)}{\longrightarrow} \gr_i^j\,_{t_j}\times_{s_j}\gr_j^k\stackrel{p_1}{\longrightarrow} \gr_i^j
\end{equation}
and to 
\begin{equation}
\gr_i^j\,_{t_j}\times_{s_j}\gr_j^k\,_{t_k}\times_{s_k}\gr_k^l\stackrel{(p_1,p_2)}{\longrightarrow} \gr_i^j\,_{t_j}\times_{s_j}\gr_j^k\stackrel{p_2}{\longrightarrow}\gr_j^k 
\end{equation}
we obtain 
$$(p_1,p_2)^*(p_1^*(P_{ij})\cdot p_2^*(P_{jk}))=p_1^*(P_{ij})\cdot p_2^*(P_{jk})$$
and 
$$((m,1)!\circ (p_1,p_2)^*)(p_1^*(P_{ij})\cdot p_2^*(P_{jk}))\cdot p_2^*(P_{kl})=(m,1)!(p_1^*(P_{ij})\cdot p_2^*(P_{jk})\cdot p_3^*(P_{kl}))$$
by definition of $(m,1)$ and the associativity of the external product. Hence, from (\ref{eq2asso}), we get
\begin{equation}\label{eq3asso}
(P_{ij}*P_{jk})*P_{kl}=m!((m,1)!(p_1^*(P_{ij})\cdot p_2^*(P_{jk})\cdot p_3^*(P_{kl})))
\end{equation}
which by proposition \ref{Pushfunct} applied to the obvious groupoid product associative diagram gives
\begin{equation}\label{eq4asso}
(P_{ij}*P_{jk})*P_{kl}=(1,m)!(m!(p_1^*(P_{ij})\cdot p_2^*(P_{jk})\cdot p_3^*(P_{kl})))
\end{equation}
Now, we have, by applying the backwards arguments, that 
\begin{equation}\label{eq5asso}
(1,m)!(m!(p_1^*(P_{ij})\cdot p_2^*(P_{jk})\cdot p_3^*(P_{kl})))=P_{ij}*(P_{jk}*P_{kl})
\end{equation}
which ends the proof.
\end{proof}

\begin{definition}
The space $P_c^\infty(\gr,\alpha_\Omega)$ will be called the algebra of twisted pseudodifferential distributions (with respect to $\alpha_\Omega$ if needed). We will refer to the elements in the ideal $P_c^{-\infty}(\gr,\alpha_\Omega)$ as twisted regularizing distributions.
\end{definition}

\begin{remark}\label{twistedsmoothingop}
In fact, one of the advantages of having used Androulidakis-Skandalis definition is that we immediately have an identification
\begin{equation}
C_c^\infty(\gr,\alpha_\Omega)\cong P_c^{-\infty}(\gr,\alpha_\Omega).
\end{equation}
\end{remark}

The last proposition allow to realize, as in the untwisted case, the pseudodifferential distributions as multipliers, we resume this in the following corollary, whose proof follows immediately from proposition 14 in \cite{LMV}.



\begin{corollary}\label{smoothmultipliers}
With the notations above, we have
\begin{enumerate}
\item Every $K\in P_c^\infty(\gr,\alpha_\Omega)$ defines by convolution a (left) multiplier of $C_c^\infty(\gr,\alpha_\Omega)$.
\item The map $K\mapsto K*(\cdot)$ defines a monomorphism of algebras 
$$P_c^\infty(\gr,\alpha_\Omega) \to M(C_c^\infty(\gr,\alpha_\Omega)).$$
\end{enumerate}
\end{corollary}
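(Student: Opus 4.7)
The plan is to deduce this corollary directly from Theorem \ref{pdoalg}, reducing it to the untwisted case (proposition 14 in \cite{LMV}). The essential input is that by that theorem $P_c^{-\infty}(\gr,\alpha_\Omega)$ is a bilateral ideal of $P_c^\infty(\gr,\alpha_\Omega)$, and by Remark \ref{twistedsmoothingop} it is canonically identified with $C_c^\infty(\gr,\alpha_\Omega)$.

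For (1), given $K\in P_c^m(\gr,\alpha_\Omega)$ and $f\in C_c^\infty(\gr,\alpha_\Omega)\cong P_c^{-\infty}(\gr,\alpha_\Omega)$, the twisted convolution $K*f$ defined by (\ref{twistdistproduct}) lies again in $P_c^{-\infty}(\gr,\alpha_\Omega)$ because $P_c^{-\infty}$ is an ideal; hence $K*f\in C_c^\infty(\gr,\alpha_\Omega)$. The associativity established in Theorem \ref{pdoalg} then gives $(K*f)*g=K*(f*g)$ for all $g\in C_c^\infty(\gr,\alpha_\Omega)$, which is exactly the defining property of a left multiplier. The map $K\mapsto K*(\cdot)$ is a homomorphism of algebras by that same associativity, proving the algebraic part of (2).

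It remains to check injectivity in (2). The argument is local: by working in a bisubmersion chart $(\gr_i^j,s_j,t_i)$ with $L_{ij}$ trivialized over a small enough open subset, the problem reduces to the untwisted situation in which a generalized function with pseudodifferential singularity is determined by its action by convolution on smooth test sections (one tests $K*(\cdot)$ against sequences of compactly supported smooth sections concentrating on source fibers to recover both the smooth part and the symbol of $K$). The functoriality of pullback and pushforward (propositions \ref{Pullfunct} and \ref{Pushfunct}) together with their compatibility (proposition \ref{PullPush}) ensure that this reduction to the local model is coherent. The main potential difficulty lies in the presence of the Fell structure on $L$; however, since the local trivializations of the $L_{ij}$ turn the Fell isomorphisms (\ref{Fellproperty}) into multiplications by smooth scalar cocycles, the twisting contributes only harmlessly and the proof of proposition 14 in \cite{LMV} applies verbatim.
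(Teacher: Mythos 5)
Your proposal is correct and follows essentially the same route as the paper: the paper simply deduces the corollary from the algebra structure of Theorem \ref{pdoalg}, the identification $P_c^{-\infty}(\gr,\alpha_\Omega)\cong C_c^\infty(\gr,\alpha_\Omega)$, and proposition 14 of \cite{LMV}. Your extra detail on injectivity (local trivialization of the $L_{ij}$ reducing the Fell structure to a scalar cocycle so the untwisted argument applies) is exactly the kind of verification the paper leaves implicit when it says the proof ``follows immediately'' from the cited proposition.
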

 

Denote by $\Psi_c^m(\gr,\alpha_\Omega)$ the image in the multiplier algebra $M(C_c^\infty(\gr,\alpha_\Omega))$ of $P_c^m(\gr,\alpha_\Omega)$. The union 
$$\Psi_c^\infty(\gr,\alpha_\Omega)=\bigcup_{m\in \mathbb{Z}}\Psi_c^m(\gr,\alpha_\Omega)$$
forms a filtered subalgebra of $M(C_c^\infty(\gr,\alpha_\Omega))$ with bilateral ideal 
$$\Psi_c^{-\infty}(\gr,\alpha_\Omega)=\bigcap_{m\in \mathbb{Z}}\Psi_c^m(\gr,\alpha_\Omega)$$

\begin{definition}
The algebra $\Psi_c^{\infty}(\gr,\alpha_\Omega)$ is called the algebra of projective pseudodifferential operators associated to $\alpha_\Omega$. The elements in $\Psi_c^{-\infty}(\gr,\alpha_\Omega)$ are called as usual the regularizing operators.
\end{definition}

\subsection{Projective operators as twisted invariant families}\label{twistedfamiliessection}

Every $K\in P_c^\infty(\gr,\alpha_\Omega)$ defines by convolution an operator
$$P_K:C_c^\infty(\gr,\alpha_\Omega)\to C_c^\infty(\gr,\alpha_\Omega).$$
By definition of the convolution product of distributions it has the property that if $K_{ij}$ denotes the component of $K$ corresponding to $\gr_i^j$ and if $k\in I$ with $\gr_i^j\,_{t_j}\times_{s_j}\gr_j^k$ is not empty the operator sends
$$P_{K_{ij}}^k:C_c^\infty(\gr_j^k,\Omega^{\frac{1}{2}}\otimes L_{jk})\to C_c^\infty(\gr_i^k,\Omega^{\frac{1}{2}}\otimes L_{ik}).$$
Moreover we will see that it also satisfies some invariance property when one changes of $k$. To be more precise, for $r\in R_\alpha$ of the form $r=((l,g,k),u)$ with $s(g)=x$ and $t(g)=y$ one has an isomorphism
$$U_j^r:C^\infty((\gr_j^k)^x,\Omega^{\frac{1}{2}}\otimes L_{jk})\to C^\infty((\gr_j^l)^y,\Omega^{\frac{1}{2}}\otimes L_{jl}),$$
for every $j\in I$ given as follows: Consider $[(r,1)]\in L_{kl}^g$, then the Fell product on $L$ implies that $[(r,1)]$ defines by multiplication an isomorphism 
$L_{jk}^{g^{-1}\eta}\to L_{ik}^{\eta}$ for every $\eta \in (\gr_j^l)^y$, given $v\in L_{jk}^{g^{-1}\eta}$ we denote by $r\cdot v\in L_{ik}^{\eta}$ this action. The morphism $U_j^r$ is given by
$$U_j^r(f)(\eta):=r\cdot f(g^{-1}\eta),$$
and it is an isomorphism with inverse $U_j^{r^{-1}}$.

It should be by now not very surprising that some invariance property hold, indeed in the (untwisted) Lie groupoid case it has been known by experts that the pseudodifferential operators might be defined by global distributions on the groupoid and that the invariance property is just a natural consequence of the convolution product of distributions. In fact it was very recently in \cite{LMV} that Lescure, Manchon and Vassout formalized these ideas and went further in the study of groupoid convolutions. Our result is the following:

\begin{proposition}
Let $K\in P_c^\infty(\gr,\alpha_\Omega)$ and $P_K$ the associated pseudodifferential operator as above. The following diagram is commutative
\begin{equation}
\xymatrix{
C_c^\infty((\gr_j^k)^x,\Omega^{\frac{1}{2}}\otimes L_{jk})\ar[d]_-{U_j^r}
\ar[rr]^-{P_{ij,x}^k}&&C_c^\infty((\gr_i^k)^x,\Omega^{\frac{1}{2}}\otimes L_{ik})\ar[d]^-{U_i^r}\\
C_c^\infty((\gr_j^l)^y,\Omega^{\frac{1}{2}}\otimes L_{jl})\ar[rr]_-{P_{ij,y}^l}&&C_c^\infty((\gr_i^l)^y,\Omega^{\frac{1}{2}}\otimes L_{il})
}
\end{equation}
for every $r\in R_\alpha$ of the form $r=((l,g,k),u)$ with $s(g)=x$, $t(g)=y$ and where $P_{ij,x}^k:=(P_{K_{ij}}^k)|_{C_c^\infty((\gr_j^k)^x,\Omega^{\frac{1}{2}}\otimes L_{jk})}$ and $P_{ij,y}^l:=(P_{K_{ij}}^l)|_{C_c^\infty((\gr_j^l)^y,\Omega^{\frac{1}{2}}\otimes L_{jl})}$.
\end{proposition}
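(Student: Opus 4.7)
The plan is to interpret both operators in the diagram as instances of multiplication in the twisted convolution/multiplier algebra --- $P_{ij,x}^k$ as left twisted convolution by the kernel $K_{ij}$, and $U^r$ as the combined operation of left target-fibre translation by $g^{-1}$ together with right Fell multiplication by $[r]\in L_{kl}^g$ --- and to derive the commutativity as the associativity of the Fell product $\bullet_F$ together with a Haar-invariance change of variables on the fibres of $\gr_\Omega$.

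First, I would reduce to the smoothing case. By Remark~\ref{twistedsmoothingop}, the bilateral ideal $P_c^{-\infty}(\gr,\alpha_\Omega)$ identifies with the smooth convolution algebra $C_c^\infty(\gr,\alpha_\Omega)$ and is distributionally dense in $P_c^\infty(\gr,\alpha_\Omega)$; since the convolution \eqref{twistdistproduct} and the Fell action $U^r$ are both continuous in this topology, it suffices to verify the identity for $K_{ij}\in C_c^\infty(\gr_i^j,\Omega^{1/2}\otimes L_{ij})$. For such a smoothing kernel, the operator $P_{ij,x}^k$ takes the explicit form of twisted left convolution,
\[
(P_{ij,x}^k f)(\gamma)\;=\;\int_{\gamma_1\cdot\gamma_2=\gamma} K_{ij}(\gamma_1)\bullet_F f(\gamma_2),
\]
with the integrand a $1$-density valued in $L_{ik}^\gamma$ by Lemma~\ref{LMVlemma} and the Fell isomorphism~\eqref{Fellproperty}.

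Next, I would evaluate both compositions at an arbitrary $\eta\in(\gr_i^l)^y$ and, using the associativity of $\bullet_F$ to bring $[r]$ inside the convolution integral, obtain
\[
(U_i^r P_{ij,x}^k f)(\eta)\;=\;\int_{\gamma_1\gamma_2=g^{-1}\eta} K_{ij}(\gamma_1)\bullet_F f(\gamma_2)\bullet_F[r],
\]
\[
(P_{ij,y}^l U_j^r f)(\eta)\;=\;\int_{\eta_1\eta_2=\eta} K_{ij}(\eta_1)\bullet_F f(g^{-1}\eta_2)\bullet_F[r].
\]
These two integrals are then matched by a change of variables relating the composable pairs in the two integration domains, of the form $(\gamma_1,\gamma_2)\leftrightarrow(\eta_1,\eta_2)$ with $\gamma_2=g^{-1}\eta_2$, verified to be a smooth bijection on the locally compactly supported integrands and to preserve the canonical $1$-density of Lemma~\ref{LMVlemma} by the right-invariance of the Haar system encoded in the half-density factors. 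Under this substitution one has $f(g^{-1}\eta_2)=f(\gamma_2)$ and, after the associativity of the groupoid product is used together with the Fell isomorphism~\eqref{Fellproperty}, the $K_{ij}$-factors match as well.

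The main obstacle will be the careful bookkeeping of the cover pieces $\gr_i^j,\gr_j^k,\gr_j^l,\gr_i^l$ of $\gr_\Omega$ under this substitution: one must check that $g^{-1}\eta_2\in\gr_j^k$ whenever $\eta_2\in\gr_j^l$, and that the Fell isomorphism~\eqref{Fellproperty} is correctly applied to translate the $L_{jk}$-valued integrand into an $L_{jl}$-valued one. These compatibilities are automatic from the good-cover assumption on $\Omega$ (which allows any relevant refinement of the cover near the element $g$) and from the cocycle nature of $\alpha_\Omega$. Conceptually, the whole statement is the assertion that left multiplication by $K_{ij}$ and right Fell multiplication by $[r]$ commute --- an instance of the associativity of the twisted convolution product established in Theorem~\ref{pdoalg}.
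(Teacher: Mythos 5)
Your strategy diverges from the paper's at the very first step, and that step is where the gap lies. The paper does not reduce to smoothing kernels at all: it proves $K_{ij}*U_j^r(f)=U_i^r(K_{ij}*f)$ directly for an arbitrary $K_{ij}\in P_c^\infty$ by unwinding the definition $K_{ij}*f=m!(p_1^*(K_{ij})\cdot p_2^*(f))$ and checking three structural compatibilities: (I) $p_2^*$ intertwines $U_j^r$ with the action $\widetilde{U_j^r}$ on the second coordinate of the fibred product; (II) multiplication by $p_1^*(K_{ij})$ commutes with $\widetilde{U_j^r}$ because the latter only touches the second factor; (III) $m!\circ\widetilde{U_j^r}=U_i^r\circ m!$. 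This works uniformly in the order $m$ with no approximation. Your reduction to $K_{ij}\in C_c^\infty$ rests on the assertion that $P_c^{-\infty}(\gr,\alpha_\Omega)$ is ``distributionally dense'' in $P_c^\infty(\gr,\alpha_\Omega)$ and that the convolution and the Fell action are ``continuous in this topology'' --- but you never specify the topology, and the standard fact is that smoothing kernels are dense in order-$m$ conormal distributions only in a strictly weaker topology (order $m'>m$, or weak convergence of distributions); you would then have to verify that $K\mapsto m!\bigl(p_1^*(K)\cdot p_2^*(f)\bigr)$ and $K\mapsto U_i^r(K*f)$ are continuous for that weaker convergence. This is provable, but it is exactly the content that is missing, and it is more work than the direct argument it replaces.

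Two further points. The closing claim that the proposition is ``an instance of the associativity established in Theorem \ref{pdoalg}'' is not literally available: $U^r$ is right multiplication by a point mass at $g\in\gr_\Omega$ twisted by an element of $L^g$, which is not an element of $P_c^\infty(\gr,\alpha_\Omega)$, so that theorem does not apply to it. What is true --- and is the paper's route --- is that the same structural lemmas \ref{Pullfunct}, \ref{Pushfunct} and \ref{PullPush} that yield associativity also yield the present invariance. Likewise, the composability bookkeeping (that $g^{-1}\eta_2$ lands in the correct piece $\gr_j^k$) is forced by the Cech-groupoid composability constraints for the fixed cocycle, not by ``any relevant refinement of the cover'': the cover and the cocycle $\alpha_\Omega$ are fixed data defining the algebra and cannot be refined mid-proof. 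Once the smoothing case is granted, your change-of-variables computation with Fell associativity is the right one; but as written the proof has a genuine gap at the reduction step and an inapplicable appeal at the end.
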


\begin{proof}
Let $K_{ij}\in P_c^\infty(\gr_i^j,\Omega^{\frac{1}{2}}\otimes L_{ij})$ and 
$f\in C_c^\infty(\gr_j^k,\Omega^{\frac{1}{2}}\otimes L_{jk})$, we want to prove the following equality
\begin{equation}\label{twistinvdist}
K_{ij}*U_j^r(f)=U_i^r(K_{ij}*f)
\end{equation}
By definition of the convolution product it will follow from the following three facts

{\bf I. } Compatibility between the pullback $p_2^*$ and the twisted action: That is, the following diagram is commutative
\begin{equation}
\xymatrix{
C_c^\infty(\gr_j^k,\Omega^{\frac{1}{2}}\otimes L_{jk})
\ar[d]_-{U_j^r}\ar[r]^-{p_2^*}&C_c^\infty(\gr_i^j\,_{t_j}\times_{s_j}\gr_j^k,p_2^*(\Omega^{\frac{1}{2}}\otimes L_{jk}))\ar[d]^-{\widetilde{U_j^r}}\\
C_c^\infty(\gr_j^l,\Omega^{\frac{1}{2}}\otimes L_{jl})\ar[r]_-{p_2^*}
&C_c^\infty(\gr_i^j\times_{\Omega_j}\gr_j^l,p_2^*(\Omega^{\frac{1}{2}}\otimes L_{jl}))
}
\end{equation}
where $\widetilde{U_j^r}$ is defined as $U_j^r$ but just in the second coordinate.

{\bf II.} The following equality holds since the $r$-action is only on the second coordinate:
\begin{equation}
p_1^*(K_{ij})\cdot \widetilde{U_j^r}(p_2^*(f))=\widetilde{U_j^r}(p_1^*(K_{ij})\cdot p_2^*(f)).
\end{equation}

{\bf III.} Compatibility between the pushforward $m!$ and the twisted action: Indeed, a direct computation shows that
\begin{equation}
m!(\widetilde{U_j^r}(p_1^*(K_{ij})\cdot p_2^*(f)))=U_i^r(m!(p_1^*(K_{ij})\cdot p_2^*(f)))).
\end{equation}
We can check now (\ref{twistinvdist}) using {\bf I,II,III} above:
\begin{equation}\nonumber
K_{ij}*U_j^r(f)=
m!(p_1^*(K_{ij})\cdot p_2^*(U_j^r(f)))=m!(p_1^*(K_{ij})\cdot \widetilde{U_j^r}(p_2^*(f)))
\end{equation}
\begin{equation}\nonumber
=m!(\widetilde{U_j^r}(p_1^*(K_{ij})\cdot p_2^*(f)))=U_i^r(K_{ij}*f)
\end{equation}

\end{proof}

The last proposition suggests the definition of operators invariant under a twisted action (or an action of the associated extension) induced from the twisting.

\begin{definition}
An $\alpha_\Omega$-operator is an operator
$$P:C_c^\infty(\gr,\alpha_\Omega)\to C^\infty(\gr,\alpha_\Omega).$$
given by a family of operators
$$P_{ij,x}^k:C_c^\infty((\gr_j^k)^x,\Omega^{\frac{1}{2}}\otimes L_{jk})\to C^\infty((\gr_i^k)^x,\Omega^{\frac{1}{2}}\otimes L_{ik})$$
in the sense that
$$P(f_{jk})(\gamma)=P_{ij,x}^k(f_{jk}|_{(\gr_j^k)^x})(\gamma)$$
for $\gamma\in (\gr_i^k)^x$ and $f_{jk}\in C_c^\infty(\gr_j^k,\Omega^{\frac{1}{2}}\otimes L_{jk})$; and such that the following diagram
\begin{equation}
\xymatrix{
C_c^\infty((\gr_j^k)^x,\Omega^{\frac{1}{2}}\otimes L_{jk})\ar[d]_-{U_j^r}
\ar[rr]^-{P_{ij,x}^k}&&C_c^\infty((\gr_i^k)^x,\Omega^{\frac{1}{2}}\otimes L_{ik})\ar[d]^-{U_i^r}\\
C_c^\infty((\gr_j^l)^y,\Omega^{\frac{1}{2}}\otimes L_{jl})\ar[rr]_-{P_{ij,y}^l}&&C_c^\infty((\gr_i^l)^y,\Omega^{\frac{1}{2}}\otimes L_{il})
}
\end{equation}
is commutative for every $r\in R_\alpha$ of the form $r=((l,g,k),u)$ with $s(g)=x$, $t(g)=y$.
\end{definition}

\subsection{Projective operators acting on twisted vector bundles}

In practice one would like to admit operators (or distributions) acting on sections of vector bundles. The definition of $\alpha_\Omega$-operators and the fact that our operators are of this kind suggest that the vector bundles one can insert in the definition have to satisfy as well some kind of invariance with respect to the twisted action. As is already known, this will imply that the twisting $\alpha$ is necessarily torsion. Let us first give the definition of a twisted vector bundle following \cite{TXL} definition 5.2 and lemma 5.3.

\begin{definition}\label{twistedvbundles}
An $\alpha_\Omega$-vector bundle (complex) is a collection of (complex) vector bundles 
$$\{E_i\to \Omega_i\}_{i\in I}$$
together with an $R_{\alpha_\Omega}$-action such that the center $\bigsqcup_i\Omega_i\times S^1$ acts on $E:=\bigsqcup_iE_i\to \bigsqcup_i\Omega_i$ by scalar multiplication.
\end{definition}

As mentioned above, proposition 5.5 in \cite{TXL} shows that a necessary condition for the $\alpha_\Omega$-vector bundles to exist is that $\alpha$ has to be torsion. We will then assume $\alpha$ is given by a generalized morphism
$$\alpha:\gr --->PU(N)$$
for the rest of the section. Also, in proposition 5.5 ref.cit., Tu and Xu give a very practical equivalence definition which generalizes directly the definition for families given in \cite{MMSI}, the idea is very easy, a $\alpha_\Omega$-vector bundle can be given by a groupoid morphism
$$R_\alpha \to GL_n(\mathbb{C})$$
that descends to a morphism 
$$\gr_\Omega\to P(GL_n(\mathbb{C})),$$
or in other terms $R_\alpha$ is also the pullback extension with respect to the last morphism and to the universal extension 
$$S^1\to GL_n(\mathbb{C})\to P(GL_n(\mathbb{C}))$$
We can now give the definition of projective operators acting on sections of vector bundles.

\begin{definition}
Let $\alpha:\gr --->PU(N)$ be a torsion twisting. Let $E\to \bigsqcup_i\Omega_i$ and 
$F\to \bigsqcup_i\Omega_i$ be two $\alpha_\Omega$-vector bundles. A twisted pseudodifferential operator of order $m$ acting between (twisted) sections of $E$ and $F$ is an operator
 
$$P:C_c^\infty(\gr_\Omega,\Omega^{\frac{1}{2}}\otimes L\otimes t^*E)\to C^\infty(\gr_\Omega,\Omega^{\frac{1}{2}}\otimes L\otimes t^*F).$$
given by a smooth family of order $m$ pseudodifferential operators
$$P_{ij,x}^k:C_c^\infty((\gr_j^k)^x,\Omega^{\frac{1}{2}}\otimes L_{jk}\otimes t^*E_k)\to C^\infty((\gr_i^k)^x,\Omega^{\frac{1}{2}}\otimes L_{ik}\otimes t^*F_k)$$
and such that the following diagram
\begin{equation}
\xymatrix{
C_c^\infty((\gr_j^k)^x,\Omega^{\frac{1}{2}}\otimes L_{jk}\otimes t^*E_k)\ar[d]_-{U_j^r}
\ar[rr]^-{P_{ij,x}^k}&&C_c^\infty((\gr_i^k)^x,\Omega^{\frac{1}{2}}\otimes L_{ik}\otimes t^*F_k)\ar[d]^-{U_i^r}\\
C_c^\infty((\gr_j^l)^y,\Omega^{\frac{1}{2}}\otimes L_{jl}\otimes t^*E_l)\ar[rr]_-{P_{ij,y}^l}&&C_c^\infty((\gr_i^l)^y,\Omega^{\frac{1}{2}}\otimes L_{il}\otimes t^*F_l)
}
\end{equation}
is commutative for every $r\in R_\alpha$ of the form $r=((l,g,k),u)$ with $s(g)=x$, $t(g)=y$.

\end{definition}

\subsection{The algebra of twisted symbols}\label{twistsymbsec} 

In this section we will consider a twisting $\alpha^0$ on $M$, it may or not come from a twisting on the groupoid, the results hold anyway. Precisely for this reason, we will denote by $A\to M$ a Lie algebroid over $M$ without any reference to a groupoid.

Consider the space
\begin{equation}
Symb_{cl}^m(A^*,\alpha^0_\Omega):= \bigoplus_{(i,j)}Symb_{cl}^m(N_{ij}^*,L_{ij})
\end{equation}
where $N_{ij}:=A|_{\Omega_{ij}}$.

By definition one has 
$$Symb_{cl}^m(A^*,\alpha^0_\Omega)\subset Symb_{cl}^n(A^*,\alpha^0_\Omega)$$
for $m\leq n$.
We denote as usual
\begin{center}
$Symb_{cl}^\infty(A^*,\alpha^0_\Omega)=\bigcup_m\,Symb_{cl}^m(A^*,\alpha^0_\Omega)$ and
\end{center}
\begin{center} 
$Symb_{cl}^{-\infty}(A^*,\alpha^0_\Omega)=\bigcap_m\, Symb_{cl}^m(A^*,\alpha^0_\Omega)$
\end{center}
Next, we will define a product on $Symb_{cl}^\infty(A^*,\alpha^0_\Omega)$ using the Fell bundle $L$ over $M_\Omega:=\bigsqcup_{(i,j)}\Omega_{ij}$. Let $a\in Symb_{cl}^m(A^*,\alpha^0_\Omega)$ and $b\in Symb_{cl}^n(A^*,\alpha^0_\Omega)$, we let
$$(a*b)_{ik}\in Symb_{cl}^m(N_{ik}^*,L_{ik})$$ to be defined by
$$(a*b)_{ik}((k,x,i),\xi)=\sum_ja_{ij}((j,x,i),\xi)\cdot_F b_{jk}((k,x,j),\xi)$$
for $x\in \Omega_{ijk}$ and $\xi\in N^*_{ijk}$, and zero otherwise, where $\cdot_F$ stands for the Fell product $L_{ij}^x\otimes L_{jk}^x\stackrel{\cong}{\rightarrow}L_{ik}^x$ and the sum is finite since we are only considering locally finite open coverings $\Omega$. Notice that there is no convolution on the normal direction. The following proposition is an easy exercise proved exaclty as in the classic untwisted case.
\begin{proposition}\label{symbalgebra}
The convolution product described above gives a well defined product on $Symb_{cl}^\infty(A^*,\alpha^0_\Omega)$, with it, this space becomes a filtered algebra. In particular $Symb_{cl}^{-\infty}(A^*,\alpha^0_\Omega)$ is a two sided ideal.
\end{proposition}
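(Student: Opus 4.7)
The plan is to verify, in order: (i) well-definedness together with compatibility with the filtration, (ii) associativity of $*$, and (iii) the ideal property; with (ii) being the only step requiring genuine care, and (i), (iii) reducing to standard pointwise symbol estimates combined with the smoothness and cocycle properties of the Fell line bundle $L$.

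For (i), fix $(i,k)\in I^2$ and $x\in \Omega_{ik}$. The defining sum
\[
(a*b)_{ik}((k,x,i),\xi)=\sum_j a_{ij}((j,x,i),\xi)\cdot_F b_{jk}((k,x,j),\xi)
\]
is finite because $\Omega$ is locally finite, so only finitely many $j$ satisfy $x\in \Omega_j$. Each summand is smooth in $(x,\xi)$: both factors are smooth by assumption and the Fell product is a smooth fibrewise bilinear map $L_{ij}^x\otimes L_{jk}^x\to L_{ik}^x$ induced by the Lie groupoid multiplication on $R_\alpha$. If $a_{ij}\in Symb_{cl}^m(N_{ij}^*,L_{ij})$ and $b_{jk}\in Symb_{cl}^n(N_{jk}^*,L_{jk})$, the Leibniz rule applied to the pointwise product in $\xi$ yields symbol estimates of order $m+n$ for each summand, and classicality is preserved since the homogeneous asymptotic expansions multiply termwise. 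This produces bilinear maps
\[
Symb_{cl}^m(A^*,\alpha^0_\Omega)\times Symb_{cl}^n(A^*,\alpha^0_\Omega)\to Symb_{cl}^{m+n}(A^*,\alpha^0_\Omega),
\]
so both well-definedness and compatibility with the filtration are obtained at once.

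For (ii), fix $(i,l)$ and $x\in \Omega_{il}$ and expand both $((a*b)*c)_{il}$ and $(a*(b*c))_{il}$ at $((l,x,i),\xi)$. Both are the same double sum, ranging over those $(j,k)$ with $x\in \Omega_j\cap \Omega_k$, of triple products
\[
a_{ij}((j,x,i),\xi)\cdot_F b_{jk}((k,x,j),\xi)\cdot_F c_{kl}((l,x,k),\xi),
\]
differing only in the placement of parentheses. Associativity therefore reduces to the associativity of the fibrewise triple Fell product $L_{ij}^x\otimes L_{jk}^x\otimes L_{kl}^x \to L_{il}^x$, which is a direct consequence of the associativity of the groupoid product on $R_\alpha$ (equivalently, of the cocycle data encoded by (\ref{Fellproperty})). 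This is the step I expect to be the main obstacle, since it requires carefully unwinding the Fell bundle identifications; however, since no integration in $\xi$ is involved, the argument is entirely pointwise and no analytic subtleties appear beyond those of (i).

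Finally (iii) is immediate: if every component of $a$ has order $-\infty$, then each component of $a*b$ and $b*a$ is a finite sum of products of an order-$(-\infty)$ symbol with a symbol of finite order, and such a product is again of order $-\infty$ by the Leibniz estimates of (i). Hence $Symb_{cl}^{-\infty}(A^*,\alpha^0_\Omega)$ is a two-sided ideal of $Symb_{cl}^{\infty}(A^*,\alpha^0_\Omega)$, concluding the proof.
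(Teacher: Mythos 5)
Your proof is correct and follows exactly the route the paper intends: the paper simply declares the proposition ``an easy exercise proved exactly as in the classic untwisted case,'' and your argument supplies precisely those details, namely pointwise symbol estimates and classicality for the finite sums (local finiteness of $\Omega$), with the only twist-specific input being the associativity of the fibrewise Fell product inherited from the groupoid structure of $R_\alpha$ via (\ref{Fellproperty}). No gaps; your write-up is essentially the omitted standard verification.
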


As we remarked above the convolution product of the twisted symbol's algebra is only in the space direction not in the normal direction, so in particular if the twisting is trivial this product is only the pointwise product. As in the classic case, the inverse Fourier transform associates to a symbol $a\in Symb_{cl}^*(A^*,\alpha^0_\Omega)$  a distribution \v{a} which as a convolution operator will be a $A$-twisted pseudodifferential operator where we consider $A\rightrightarrows M$ as a Lie groupoid using its vector bundle structure. In fact, we already defined like that our twisted operators with the small difference of the cut off functions. Let us explain all this in detail and enounce the main conclusion. Let $a\in Symb_{cl}^*(N_{ij}^*,L_{ij})$, we let $\text{\v{a}}$ be the distribution acting on 
$u\in C_c^\infty(A_i^j,\Omega^1(A_i^j)\otimes (L_{ij}^*))$ by 
\begin{equation}
\langle \text{\v{a}}, u \rangle:=\int_{(x,X)\in A_i^j}\int_{A_x\times A_x^*}e^{i(X-Y)\cdot \xi}u(x,Y)a(x,\xi)
\end{equation}
Choose a smooth function $\chi(x,X)$ on $A_i^j$ equal to $1$ for $X=0$ and equal to zero for $\|X\|\geq1$, then 
$$\text{\v{a}}=\chi\cdot \text{\v{a}}+(1-\chi)\text{\v{a}}\in \Psi_c^m(A,\alpha_\Omega^0)+\mathscr{S}(A,\alpha^0_\Omega)=:\Psi^*(A,\alpha^0_\Omega),$$
where $\mathscr{S}(A,\alpha^0_\Omega)$ is the twisted groupoid algebra of Schwartz sections of $L\to A_\Omega$, it is constructed exactly as $C_c^\infty (A,\alpha^0_\Omega)$ but admitting sections which are rapidly decreasing in the direction of the fibers of $A$.

On the other hand, given $K\in \Psi^*(A,\alpha^0_\Omega)$ we have the fiberwise Fourier transform that gives a symbol $\hat{K}\in Symb_{cl}^*(A^*,\alpha_\Omega^0).$

\begin{proposition}\label{Fourierpdosymb}
The fiberwise Fourier transform induces an isomorphism of algebras
\begin{equation}
\xymatrix{
\Psi^\infty(A,\alpha^0_\Omega)\ar[r]_-{\cong}^-{\mathscr{F}}&Symb_{cl}^\infty(A^*,\alpha_\Omega^0).
}
\end{equation}
\end{proposition}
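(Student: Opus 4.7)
The plan is to exhibit $\mathscr{F}$ as a morphism of algebras componentwise on the direct sum decomposition indexed by pairs $(i,j)\in I^2$, and then verify bijectivity and multiplicativity separately. Since both sides of the statement are built from a direct sum over pairs, and both the convolution product on $\Psi^\infty(A,\alpha^0_\Omega)$ and the pointwise-in-$\xi$ product on $Symb_{cl}^\infty(A^*,\alpha^0_\Omega)$ only couple adjacent indices, it suffices to check the isomorphism on a fixed pair $(i,j)$ and the product compatibility for a composable pair $((i,j),(j,k))$.

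First I would fix $(i,j)$ and work on the vector bundle $A_i^j=A|_{\Omega_{ij}}$, viewed as a bundle groupoid under fiberwise addition, equipped with the line bundle $L_{ij}$ pulled back from $\Omega_{ij}$. Since the twisting $\alpha^0$ lives only over the base $M$ and the groupoid product on $A_i^j$ is fiberwise, the coefficient $L_{ij}$ is constant in the fiber direction, and the classical fiberwise Fourier transform gives a bijection
\begin{equation*}
\Psi^*(A_i^j,L_{ij})=\Psi_c^*(A_i^j,L_{ij})+\mathscr{S}(A_i^j,L_{ij})\stackrel{\cong}{\longrightarrow} Symb_{cl}^*(N_{ij}^*,L_{ij}).
\end{equation*}
This is the standard bundle-groupoid statement; the decomposition $\text{\v{a}}=\chi\cdot\text{\v{a}}+(1-\chi)\text{\v{a}}$ described just before the statement of the proposition is exactly the mechanism producing the two summands on the left from a given symbol.

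Second, for multiplicativity, the crucial observation is that the Fell product $\cdot_F$ is a base-direction pairing $L_{ij}^x\otimes L_{jk}^x\to L_{ik}^x$ that is constant along the fibers of $A$, whereas the groupoid convolution on $A$ is the fiberwise additive convolution. Thus, for $K_1\in\Psi^*(A_i^j,L_{ij})$ and $K_2\in\Psi^*(A_j^k,L_{jk})$, unwinding the pullback--multiplication--pushforward formula from Section \ref{pdogrpds} in the case where the groupoid multiplication is vector bundle addition gives, fiber by fiber over $x\in\Omega_{ijk}$,
\begin{equation*}
(K_1*K_2)(x,X)=\int_{A_x}K_1(x,X-Y)\cdot_F K_2(x,Y)\,dY.
\end{equation*}
A standard Fubini calculation then yields
\begin{equation*}
\mathscr{F}(K_1*K_2)(x,\xi)=\mathscr{F}(K_1)(x,\xi)\cdot_F \mathscr{F}(K_2)(x,\xi),
\end{equation*}
which is precisely the product on $Symb_{cl}^\infty(A^*,\alpha^0_\Omega)$ defined before Proposition \ref{symbalgebra}. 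Summing over composable triples $(i,j,k)$ recovers the full algebra product on both sides.

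The main obstacle I expect is the reduction from the abstract pullback/pushforward definition of the convolution product (expressed via Androulidakis--Skandalis distributions in Section \ref{ASsection}) to the concrete fiberwise integral displayed above: one has to check that for the bundle groupoid $A\rightrightarrows M$, the operations $p_1^*$, $p_2^*$, $m!$ reduce in the transverse fiber coordinates to the naive pullback and integration against the additive structure of $A$, so that the external product of Proposition \ref{ASproduct} becomes honest fiberwise convolution. Once this identification is established, the two steps above are essentially the classical untwisted Fourier calculation on a vector bundle, with the Fell product riding along as a scalar base-direction pairing that commutes with every Fourier integral because it is constant in the fiber variable.
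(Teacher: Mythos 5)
Your proposal is correct and follows essentially the same route as the paper: the componentwise classical Fourier transform gives the vector-space identification $\Psi^*(A_i^j;L_{ij})\cong Symb_{cl}^*(N_{ij}^*;L_{ij})$, and multiplicativity holds because the symbol product (pointwise in $\xi$, Fell product in the base, summed over $j$) was defined precisely to match fiberwise convolution under $\mathscr{F}$. Your explicit reduction of the pullback--multiplication--pushforward product to the fiberwise convolution integral on the bundle groupoid $A\rightrightarrows M$ is exactly the verification the paper leaves implicit, so it is a welcome (but not divergent) elaboration.
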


\begin{proof}
Using the classic Fourier transform results, we have an isomorphism of vector spaces
\begin{equation}
\xymatrix{
\Psi^*(A_i^j;L_{ij})\ar[r]_-{\cong}^-{\mathscr{F}}&Symb_{cl}^*(N_{ij}^*;L_{ij}).
}
\end{equation}
for every $(i,j)\in I^2$. It extends to an isomorphism of vector spaces 
\begin{equation}
\xymatrix{
\Psi^\infty(A,\alpha^0_\Omega)\ar[r]_-{\cong}^-{\mathscr{F}}&Symb_{cl}^\infty(A^*,\alpha_\Omega^0).
}
\end{equation}
Now, the algebra structure introduced for $Symb_{cl}^\infty(A^*,\alpha_\Omega^0)$ in \ref{symbalgebra} above was precisely computed such that we have an algebra isomorphism.
\end{proof}

{\bf Principal Symbols for projective operators}. We will now see how the principal symbol map extends to our setting. For the rest of the section we come back to the case of a twisting $\alpha$ on a groupoid $\gr$ and the induced twisting $\alpha^0$ on $M$.

For defining the principal symbol is very easy, we just have to recall the Androulidakis-Skandalis general setting about generalized functions with pdo singularities. 
Let $P\in P^m(M,V;E)$. If $P$ is associated with a symbol $a$ of order $m$, then the principal symbol $\sigma_m(P)$ of $P$ is the homogeneous part of $a$ of order $m$, {i.e.}, the class of $a$ in 
$Symb_{cl}^m(N^*;\Omega^1N^*\otimes E)/Symb_{cl}^{m-1}(N^*;\Omega^1N^*\otimes E)$.

We have the following proposition
\begin{proposition}\label{ASpdoext}
The principal symbol map gives a short exact sequence
\begin{equation}
\small{
\xymatrix{
0\ar[r]&\Psi_c^{m-1}(\gr,\alpha_\Omega)\ar[r]& \Psi_c^{m}(\gr,\alpha_\Omega)\ar[r]^-{\sigma_m}&
Symb_{cl}^m(A^*\gr,\alpha_\Omega)/Symb_{cl}^{m-1}(A^*\gr,\alpha_\Omega)\ar[r]&0.
}}
\end{equation}
In particular we have induced isomorphisms
\begin{equation}
\Psi_c^{m}(\gr,\alpha_\Omega)/\Psi_c^{m-1}(\gr,\alpha_\Omega)\cong Symb_{cl}^m(A^*\gr,\alpha_\Omega)/Symb_{cl}^{m-1}(A^*\gr,\alpha_\Omega)
\end{equation}
\end{proposition}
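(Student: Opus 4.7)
The plan is to reduce everything to componentwise computations via the direct sum decomposition
$$\Psi_c^m(\gr,\alpha_\Omega)=\bigoplus_{(i,j)}\Psi_c^m(\gr_i^j,\Omega_{ij};\Omega^{\frac{1}{2}}\otimes L_{ij})$$
and the parallel decomposition of $Symb_{cl}^m(A^*\gr,\alpha_\Omega)$ from Section \ref{twistsymbsec}, and then apply the standard Androulidakis--Skandalis principal symbol construction recalled in Section \ref{ASsection} in each slot. The first step is to identify the conormal bundle of the submanifold $\Omega_{ij}\subset \gr_i^j$ with $A\gr|_{\Omega_{ij}}=N_{ij}$: at a unit $x\in \Omega_{ij}$ viewed as a point of $\gr_i^j$, the tangent space splits as $T_x\Omega_{ij}\oplus A_x\gr$ via, e.g., $Ker\, ds_j|_{\Omega_{ij}}=A\gr|_{\Omega_{ij}}$, which gives the required canonical isomorphism. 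Together with the standard identification between half-densities on $\gr_i^j$ along the normal direction and $\Omega^1 N_{ij}^*$-valued symbols (routine but tedious), this matches the target of the classical principal symbol map on $P_c^m(\gr_i^j,\Omega_{ij};\Omega^{\frac{1}{2}}\otimes L_{ij})$ with the space $Symb_{cl}^m(N_{ij}^*;L_{ij})$ used in Section \ref{twistsymbsec}.

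Next, for each $(i,j)$ the classical principal symbol map, applied to a conormal distribution associated with structure data $(a_{ij},h_{ij},\phi_{ij},\chi_{ij})$ as in Section \ref{ASsection}, assigns the class of the degree-$m$ homogeneous part of $a_{ij}$; this class is independent of the tubular neighborhood and cut-off by a standard argument, and vanishes precisely when the underlying symbol is of order $m-1$. Summing over $(i,j)$ yields the map $\sigma_m$ of the statement. The kernel identification is then immediate: if $\sigma_m(P)=0$, each component $P_{ij}$ has representing symbol of order $\leq m-1$ (after subtracting an honest order $m-1$ operator absorbed into the smooth part), hence $P\in P_c^{m-1}(\gr,\alpha_\Omega)$, and by Corollary \ref{smoothmultipliers} it sits in $\Psi_c^{m-1}(\gr,\alpha_\Omega)$. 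Surjectivity is obtained by the standard reverse construction: given a representative $a=\bigoplus a_{ij}\in Symb_{cl}^m(A^*\gr,\alpha_\Omega)$, choose for each $(i,j)$ a tubular neighborhood $\phi_{ij}:U_{ij}\to N_{ij}$ of $\Omega_{ij}$ in $\gr_i^j$ and a cut-off $\chi_{ij}$, and form the conormal distribution associated to $(a_{ij},0,\phi_{ij},\chi_{ij})$; its principal symbol is $[a_{ij}]$ by construction.

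The final step is passage to the multiplier algebra: Corollary \ref{smoothmultipliers} states that the map $P_c^\infty(\gr,\alpha_\Omega)\to M(C_c^\infty(\gr,\alpha_\Omega))$ is an injective algebra morphism, and it restricts to an isomorphism at each filtration level onto $\Psi_c^m(\gr,\alpha_\Omega)$, so the exact sequence at the level of $P_c^*$ descends without change to the sequence stated for $\Psi_c^*$. The induced isomorphism $\Psi_c^m/\Psi_c^{m-1}\cong Symb_{cl}^m/Symb_{cl}^{m-1}$ is then a formal consequence.

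The main obstacle will be Step 1, the clean identification of densities and line bundles. In the untwisted case this bookkeeping is well known but the presence of the Fell line bundle $L_{ij}$, the half-densities $\Omega^{\frac{1}{2}}(Ker\,ds_j\oplus Ker\,dt_i)$, and the density factor $\Omega^1 N_{ij}^*$ appearing on the symbol side require a careful verification that, along $\Omega_{ij}$ where $Ker\,ds_j$ and $Ker\,dt_i$ coincide with $A\gr|_{\Omega_{ij}}$, the combined density contributions match so that the Androulidakis--Skandalis symbol lands precisely in $Symb_{cl}^m(N_{ij}^*;L_{ij})$ without any spurious density twist. Once this is checked on one $(i,j)$ slot, the rest of the argument is a purely componentwise reduction to the classical theory.
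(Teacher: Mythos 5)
Your proposal is correct and follows essentially the same route as the paper: the paper's proof is simply a componentwise application of the Androulidakis--Skandalis principal symbol exact sequence (Proposition 1.4 of \cite{ASpdo}) to each slot $P_c^m(\gr_i^j,\Omega_{ij};\Omega^{\frac{1}{2}}\otimes L_{ij})$, which is exactly what you do, except that you re-derive that sequence instead of citing it. Your explicit attention to identifying the normal bundle of $\Omega_{ij}$ in $\gr_i^j$ with $N_{ij}=A\gr|_{\Omega_{ij}}$ and to the cancellation of the density factors (so that the symbol lands in $Symb_{cl}^m(N_{ij}^*;L_{ij})$ rather than carrying a spurious $\Omega^1N^*\otimes\Omega^{\frac{1}{2}}$ twist) is bookkeeping the paper leaves implicit, and is the right thing to check.
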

\begin{proof}
It follows from proposition 1.4 in \cite{ASpdo} which states that there is a short exact sequence
\begin{equation}\nonumber
\small{
\xymatrix{
0\ar[r]&P^{m-1}(M,V;E)\ar[r]& P^m(M,V;E)\ar[r]^-{\sigma_m}&Symb_{cl}^m(N^*;\Omega^1N^*\otimes E)/Symb_{cl}^{m-1}(N^*;\Omega^1N^*\otimes E)\ar[r]&0
}}
\end{equation}
\end{proof}


\section{Ellipticity and Regularity properties}

We recall that we are assuming our base manifold $M$ to be smooth and compact.
By picking up a metric on $A\gr$ we can identify as usual the spaces

$$Symb_{cl}^m(N_{ij}^*;L_{ij})/Symb_{cl}^{m-1}(N_{ij}^*;L_{ij})\cong C^\infty_c(S^*N_{ij},L_{ij}).$$

From proposition \ref{ASpdoext} we have a short exact sequence
 \begin{equation}\label{ASpdoext2}
\small{
\xymatrix{
0\ar[r]&\Psi_c^{m-1}(\gr,\alpha_\Omega)\ar[r]& \Psi_c^{m}(\gr,\alpha_\Omega)\ar[r]^-{\sigma_m}&
C_c^\infty (S^*\gr,\pi^*(\alpha_0)_\Omega)\ar[r]&0.
}}
\end{equation}
where $\pi:S^*\gr \to M$ is the canonical projection and $C_c^\infty (S^*\gr,\pi^*(\alpha_0)_\Omega)$ corresponds to the twisted groupoid algebra $\bigoplus_{(i,j)}C^\infty_c(S^*N_{ij},L_{ij})$.

\begin{definition}
The twisted principal symbol of an element $K\in \Psi_c^m(\gr,\alpha_\Omega)$ is the element $\sigma_m(K)\in C_c^\infty (S^*\gr,\pi^*(\alpha_0)_\Omega)$ defined in every component $(i,j)$ as (\ref{ASpdoext2}) above. An operator $K\in \Psi_c^m(\gr,\alpha_\Omega)$ is said to be elliptic if its principal symbol $\sigma_m(K)$ is invertible.
\end{definition}

We collect in the following theorem some classic facts about pseudodifferential operators that generalize to our setting. 

The next theorem is proven following the same proofs as theorem 3.15 and theorem 4.2 in \cite{ASpdo}. 
\begin{theorem}
The following properties hold:
\begin{itemize}
\item[I.]{\bf Symbolic Calculus.} Let $K_a\in \Psi_c^{m_a}(\gr,\alpha_\Omega)$, $a=1,2$, then 
$$\sigma_{m_1+m_2}(K_1* K_2)=\sigma_{m_1}(K_1)* \sigma_{m_2}(K_2).$$
\item[II.]{\bf Parametrix.} Let $K\in \Psi_c^m(\gr,\alpha_\Omega)$ elliptic. There is a pseudodifferential operator $Q\in \Psi_c^{-m}(\gr,\alpha_\Omega)$ such that 
$I-K*Q$ and $I-Q*K$ are in $\Psi_c^{-\infty}(\gr,\alpha_\Omega)$.
\end{itemize}
\end{theorem}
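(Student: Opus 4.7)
The plan is to reduce both statements to the local Androulidakis--Skandalis picture component by component, and then invoke the behaviour of principal symbols under pullback, external product, and pushforward as developed in Section \ref{ASsection}.

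\textbf{Part I (symbolic calculus).} Fix components $K_{ij}\in P_c^{m_1}(\gr_i^j,\Omega_{ij};\Omega^{\frac{1}{2}}\otimes L_{ij})$ and $K_{jk}\in P_c^{m_2}(\gr_j^k,\Omega_{jk};\Omega^{\frac{1}{2}}\otimes L_{jk})$. By definition, $K_{ij}*K_{jk}=m!(p_1^*K_{ij}\cdot p_2^*K_{jk})$. Because the projections $p_1,p_2$ are submersions transverse to the singular submanifolds, the pullback construction recalled in Section \ref{ASsection} identifies the principal symbol of $p_i^*K_i$ with the pullback of $\sigma_{m_i}(K_i)$ along the induced injection of normal bundles to the units inside $\gr_i^j\,_{t_j}\times_{s_j}\gr_j^k$. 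The external product then has top-order symbol given by the fibrewise tensor product. Crucially, the multiplication map $m$ restricted to the unit intersection $\Omega_{ijk}$ is simply the identity onto $\Omega_{ik}$ (the multiplication fibre over a unit is reduced to a point), so the integration in $m!$ collapses over the singular support and, under the identification of Lemma \ref{LMVlemma}, only the Fell isomorphism $L_{ij}^x\otimes L_{jk}^x\to L_{ik}^x$ survives at the level of principal symbols. Comparing this local computation with the convolution product defined on $Symb_{cl}^\infty(A^*\gr,\alpha^0_\Omega)$ in Section \ref{twistsymbsec} yields the asserted multiplicativity.

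\textbf{Part II (parametrix).} Given $K$ elliptic, $\sigma_m(K)$ is invertible in $C_c^\infty(S^*\gr,\pi^*\alpha^0_\Omega)$; lift its inverse, via the surjectivity in (\ref{ASpdoext2}), to some $Q_0\in \Psi_c^{-m}(\gr,\alpha_\Omega)$. Part I then gives $\sigma_0(I-K*Q_0)=0$, so $R:=I-K*Q_0\in \Psi_c^{-1}(\gr,\alpha_\Omega)$. I would then construct $Q$ by a standard asymptotic sum $Q\sim Q_0*\sum_{n\geq 0}R^{*n}$, realized componentwise on the cocycle using a Borel-type summation of symbols of decreasing order, and lifted back to $P_c^\infty(\gr,\alpha_\Omega)$ via the surjectivity in the short exact sequence of Proposition \ref{ASpdoext}. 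By construction $I-K*Q\in\Psi_c^{-\infty}(\gr,\alpha_\Omega)$, and a symmetric argument produces a left inverse modulo smoothing; the two parametrices agree modulo $\Psi_c^{-\infty}(\gr,\alpha_\Omega)$ by the usual sandwich argument.

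\textbf{Main obstacle.} The technical heart lies in Part I: one must track carefully that the Fell bundle structure on $L\to\gr_\Omega$ interacts correctly with the pushforward along $m$, so that the fibre integral in $m!$ degenerates over the singular submanifold to exactly the pointwise Fell product entering the convolution of Proposition \ref{symbalgebra}. The combined use of Propositions \ref{Pullfunct}, \ref{Pushfunct}, and \ref{PullPush} (exactly as in the associativity proof of Theorem \ref{pdoalg}) is what makes this bookkeeping rigorous. Once this compatibility is verified, the parametrix construction is formally identical to the untwisted case treated in \cite{ASpdo}, the twisting contributing only through the local, finite Fell bookkeeping along the atlas $\Omega$.
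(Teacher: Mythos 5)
Your proposal is correct and follows essentially the same route as the paper, which simply defers to the proofs of theorems 3.15 and 4.2 in \cite{ASpdo}: the symbol multiplicativity comes from tracking principal symbols through the pullback, external product and pushforward of conormal distributions (with the Fell product surviving over the unit intersection, exactly as in the associativity proof of Theorem \ref{pdoalg}), and the parametrix is the standard iteration with asymptotic summation. Your only loose phrasing is the parenthetical about the multiplication fibre over a unit; what matters (and what your argument actually uses) is that $m$ restricted to the intersection $W_1\cap W_2\cong\Omega_{ijk}$ of the singular supports is injective, so the fibre integral in $m!$ degenerates to evaluation.
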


Without surprise we also have the following important result that will allow us to use $C^*$-algebraic and $K$-theoretical tools to explore the analytic indices for projective elliptic operators.

\begin{theorem}\label{pdomultipliers}
Again, with the notations above, we have that
\begin{enumerate}
\item For $m<0$, $\Psi^m(\gr,\alpha_\Omega)\subset C^*(\gr,\alpha_\Omega)$ and, 
\item $\Psi^0(\gr,\alpha_\Omega)\subset M(C^*(\gr,\alpha_\Omega)).$
\end{enumerate}
\end{theorem}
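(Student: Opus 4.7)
The overall strategy is to mimic the proof of the analogous statement in the untwisted Lie groupoid case (Monthubert--Pierrot \cite{MP}, Nistor--Weinstein--Xu \cite{NWX}, and for a kernel--theoretic presentation \cite{LMV}), with the Fell bundle $L$ keeping systematic track of the twist and the regular representations recovered from the central extension $R_{\alpha}$.

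For part (2), the plan is to control the convolution action of $K\in \Psi_c^0(\gr,\alpha_\Omega)$ on the regular representations of $C^*(\gr,\alpha_\Omega)$. By construction $C^*(\gr,\alpha_\Omega)$ is the $S^1$-isotypical subalgebra of $C^*(R_\alpha)$, and its regular representations can be assembled from representations $\lambda_x$ on $L^2(\gr^{x},L|_{\gr^{x}}\otimes \Omega^{\frac{1}{2}})$ for $x\in M$. By the twisted-family description established in subsection~\ref{twistedfamiliessection}, the operator $K$ acts in $\lambda_x$ as a classical zero order pseudodifferential operator on the manifold $\gr^{x}$ with coefficients in the line bundle $L|_{\gr^{x}}$; the classical $L^2$-continuity theorem then gives a fiberwise $L^2$-bound. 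Combined with the smoothness of the family, compactness of $M$, and local finiteness of the cover $\Omega$, these bounds can be assembled into a uniform bound, so that $K$ descends to a bounded multiplier of $C^*(\gr,\alpha_\Omega)$ in the sense of corollary~\ref{smoothmultipliers}.

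For part (1), I would use that by remark~\ref{twistedsmoothingop} one has $\Psi_c^{-\infty}(\gr,\alpha_\Omega)\cong C_c^\infty(\gr,\alpha_\Omega)$, which is by definition dense in $C^*(\gr,\alpha_\Omega)$. It therefore suffices to show that a strictly negative order operator can be approximated in the $C^*$-norm by regularizing operators. This is achieved by truncating, in each chart $\gr_i^j$, the symbol representing $K$ outside a ball of radius $R$ in the cotangent direction, producing an approximating smoothing operator $K_R\in \Psi_c^{-\infty}(\gr,\alpha_\Omega)$; the difference $K-K_R$ is again of negative order and, by the uniform family $L^2$-bounds established for part (2), has multiplier norm tending to $0$ as $R\to \infty$. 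Thus $K$ lies in the operator-norm closure of $C_c^\infty(\gr,\alpha_\Omega)$ inside the multipliers, hence inside $C^*(\gr,\alpha_\Omega)$ itself.

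The main obstacle is precisely the uniformity of the fiberwise $L^2$-bounds for the family $\{P_{ij,x}^{k}\}$ with coefficients in the varying line bundles $L_{ij}|_{\gr_i^j}$. Classical $L^2$-continuity applies on each fiber separately, but a bit of care is needed to see that the constants depend only continuously on the base point and the Fell structure parameters; once smoothness of $K$ as a twisted family is invoked and compactness of $M$ is used, the Cech finiteness of $\alpha_\Omega$ and the standard symbolic estimates give the required uniform bound, and the rest of the argument is then a routine reduction to the untwisted case carried out inside the central extension $R_\alpha$.
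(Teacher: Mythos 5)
There is a genuine gap, and it is precisely at the point you flag as the "main obstacle". Your bounds are obtained by letting $K$ act in the regular representations $\lambda_x$ on $L^2(\gr^x,L|_{\gr^x}\otimes\Omega^{\frac{1}{2}})$ and invoking the classical fiberwise $L^2$-continuity theorem. Even granting the uniformity you postpone, such estimates only control the \emph{reduced} norm; but $C^*(\gr,\alpha_\Omega)$ in this paper denotes the \emph{maximal} completion (see the definition of the twisted $C^*$-algebras), and the theorem, as well as the pseudodifferential extension (\ref{pdoses}) and the analytic index built from it, are stated for that algebra. A bound valid in the regular representations does not bound $\|\cdot\|_{\max}$, so your truncation argument for part (1) only places a negative order operator in $C_r^*(\gr,\alpha_\Omega)$, and your part (2) only produces a multiplier of the reduced algebra. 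Moreover, the uniform fiberwise estimate itself is not routine: the fibers $\gr^x$ are in general non-compact and the constants in Calder\'on--Vaillancourt-type theorems must be shown to be uniform in $x$ and compatible with the varying line bundles $L_{ij}$; you acknowledge this but do not carry it out, and it is exactly the analysis the paper's method is designed to avoid.

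The paper's proof is representation-independent and runs in the opposite logical order, following \cite{ASpdo,MP,Vass06}. For (1): if $m<-q$ ($q$ the dimension of the $s$-fibers, the same for all charts $\gr_i^j$), the defining symbol is integrable in $\xi$, so the kernel is a continuous compactly supported section and lies in $C^*(\gr,\alpha_\Omega)$ (for any completion); if $m<-\frac{q}{2}$, then $K^*K$ has order $<-q$, and the $C^*$-identity $\|Kf\|^2=\|\langle (K^*K)f,f\rangle\|\leq C\|f\|^2$ bounds $K$ in every representation, after which one bootstraps by induction on the order. For (2): given $P$ of order $0$ with $\|\sigma_0(P)\|<1$, one chooses $Q$ with $P^*P+Q^*Q=1+R$, $R$ of strictly negative order (hence in the $C^*$-algebra by part (1)), and the estimate $\|Pf\|^2\leq(1+\|R\|)\|f\|^2$ holds in any representation, so $P$ extends to a multiplier of the maximal algebra. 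If you want to salvage your route, you would either have to restrict the statement to $C_r^*(\gr,\alpha_\Omega)$ and genuinely prove the uniform fiberwise bound (in the spirit of \cite{NWX}), or replace the fiberwise $L^2$ input by the $K^*K$/parametrix trick above, at which point your symbol-truncation approximation for part (1) becomes unnecessary.
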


\begin{proof}
This result can be proven following theorem 5.3 in \cite{ASpdo} or its Lie groupoid version \cite{MP,Vass06}, we recall briefly this last approach to show that indeed the same arguments work. 
For proving $(1)$, it is enough to take $K\in P_c^m(\gr_i^j,\Omega_{ij};\Omega^{\frac{1}{2}}\otimes L_{ij})$ with $m< 0$. Let $q\in \mathbb{N}$ the dimension of the source fibers of $\gr$, then it is also the dimension of the sources fibers of $\gr_i^j$ (this is not true for general bi-submersion as in \cite{ASpdo}, this is the reason we can use the classic Lie groupoid arguments in our case). We proceed by induction as follows, if $m<-q$ then by definition of the generalized functions with pseudodifferential singularities the integral of any symbol of order $m$ defining $K$ is a well defined continuous compactly supported section on $\gr_i^j$ and hence it defines an element in the $C^*$-algebra $C^*(\gr,\alpha_\Omega)$. If $m<-\frac{q}{2}$, then $K^*K$ has order $<-q$ and by the above argument it extends to an element in the $C^*-$algebra. Now, its norm, satisfies
$$\|(K^*K)f\|=\|Kf\|^2\leq C\|f\|^2$$
and hence $K$ extends to the $C^*$-algebra as well. In the same way $K^*$ gives an element of the $C^*$-algebra. The induction argument is now easy to follow to get to the conclusion.

For proving $(2)$, one uses the pseudodifferential sequence 
(\ref{ASpdoext}) as in the classic case. Let $P\in \Psi_c^0(\gr,\alpha_\Omega)$, first we can assume that $P$ is given by a distribution $K\in P_c^m(\gr_i^j,\Omega_{ij};\Omega^{\frac{1}{2}}\otimes L_{ij})$ and that $\|\sigma_0(P)\|<1$ in the fibers of $L_{ij}$. We can then choose a projective operator $Q$ such that 
$P^*P+Q^*Q=1+R$ with $R$ of strictly negative order. Then one obtains the following estimation
$$\|Pf\|^2\leq (1+\|R\|)\|f\|^2$$
and so $P$ extends continuously to a multiplier in $C^*(\gr,\alpha_\Omega)$.
\end{proof}

\section{Analytic Index morphism}
We will use the above results to obtain a zero order pseudodifferential extension from which the analytic index will be defined. 

As in the classic case, theorem \ref{pdomultipliers} together with the sequence (\ref{ASpdoext2}) imply that we have the following pseudodofferential extension.

\begin{proposition}
We have the following short exact sequence
\begin{equation}\label{pdoses}
\xymatrix{
0\ar[r]&C^*(\gr,\alpha_\Omega)\ar[r]&\overline{\Psi^0}(\gr,\alpha_\Omega)\ar[r]^-{\sigma}&C^*(S^*\gr,\pi^*(\alpha^0_\Omega))\ar[r]&0
}
\end{equation}
where $\overline{\Psi^0}(\gr,\alpha_\Omega)$ stands for the completion of the zero order operators in $M(C^*(\gr,\alpha_\Omega))$. 
\end{proposition}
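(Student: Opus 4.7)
The plan is to prove the short exact sequence \eqref{pdoses} by four standard steps: realize $C^*(\gr,\alpha_\Omega)$ as a closed two-sided ideal of $\overline{\Psi^0}(\gr,\alpha_\Omega)$, extend the algebraic principal symbol map continuously to the $C^*$-closure, identify its kernel, and obtain surjectivity by density. The main obstacle will be the norm estimate needed in step two.

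First I would verify that $C^*(\gr,\alpha_\Omega) \subset \overline{\Psi^0}(\gr,\alpha_\Omega)$ is a closed two-sided ideal. The algebraic inclusion $C_c^\infty(\gr,\alpha_\Omega) = P_c^{-\infty}(\gr,\alpha_\Omega) \hookrightarrow \Psi_c^0(\gr,\alpha_\Omega)$ from Remark \ref{twistedsmoothingop} and Corollary \ref{smoothmultipliers}, combined with Theorem \ref{pdomultipliers}(1), shows that strictly negative-order projective operators extend continuously to $C^*(\gr,\alpha_\Omega)$. Since $\Psi_c^0(\gr,\alpha_\Omega)$ multiplies $\Psi_c^{-\infty}(\gr,\alpha_\Omega)$ into itself by Theorem \ref{pdoalg}, the density of $C_c^\infty(\gr,\alpha_\Omega)$ in $C^*(\gr,\alpha_\Omega)$ together with the continuity of $0$-order operators as multipliers (Theorem \ref{pdomultipliers}(2)) lets us close up to get a closed ideal in $\overline{\Psi^0}(\gr,\alpha_\Omega)$.

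Next I would extend the principal symbol map. By the symbolic sequence \eqref{ASpdoext2}, the algebraic symbol $\sigma_0 : \Psi_c^0(\gr,\alpha_\Omega) \to C_c^\infty(S^*\gr,\pi^*(\alpha^0)_\Omega)$ is a $*$-homomorphism with kernel $\Psi_c^{-1}(\gr,\alpha_\Omega) \subset C^*(\gr,\alpha_\Omega)$; hence it descends to a well defined $*$-homomorphism $\overline{\sigma}_0$ on the image of $\Psi_c^0(\gr,\alpha_\Omega)$ inside the $C^*$-quotient $\overline{\Psi^0}(\gr,\alpha_\Omega)/C^*(\gr,\alpha_\Omega)$, taking values in the pre-$C^*$-algebra $C_c^\infty(S^*\gr,\pi^*(\alpha^0)_\Omega) \subset C^*(S^*\gr,\pi^*(\alpha^0)_\Omega)$. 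The crucial estimate I need is
\[
\|\sigma_0(P)\|_{C^*(S^*\gr,\pi^*\alpha^0)} \;\leq\; \inf_{K \in C^*(\gr,\alpha_\Omega)} \|P+K\|_{M(C^*(\gr,\alpha_\Omega))}
\]
for every $P \in \Psi_c^0(\gr,\alpha_\Omega)$. The proof of this estimate is the technical heart and follows the parametrix strategy of Theorem \ref{pdomultipliers}(2) adapted to the twisted setting: given $\varepsilon>0$, use symbolic calculus and the parametrix to build $Q \in \Psi_c^0(\gr,\alpha_\Omega)$ such that
\[
(\|\sigma_0(P)\|^2+\varepsilon)\,I \;-\; P^*P \;+\; Q^*Q \;\in\; \Psi_c^{-\infty}(\gr,\alpha_\Omega) \subset C^*(\gr,\alpha_\Omega),
\]
then read off a bound on $\|(P+K)f\|^2$ modulo arbitrary $K\in C^*(\gr,\alpha_\Omega)$, and pass to the infimum. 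Once this is established, $\overline{\sigma}_0$ extends by density and continuity to a $*$-homomorphism from the $C^*$-quotient $\overline{\Psi^0}(\gr,\alpha_\Omega)/C^*(\gr,\alpha_\Omega)$ into $C^*(S^*\gr,\pi^*(\alpha^0)_\Omega)$, yielding the desired map $\sigma$ on $\overline{\Psi^0}(\gr,\alpha_\Omega)$ with $C^*(\gr,\alpha_\Omega) \subset \ker \sigma$.

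Finally, for exactness I would identify the kernel and prove surjectivity. If $P \in \overline{\Psi^0}(\gr,\alpha_\Omega)$ satisfies $\sigma(P)=0$, choose an approximating sequence $P_n \in \Psi_c^0(\gr,\alpha_\Omega)$ with $P_n \to P$ in the multiplier norm. Then $\sigma_0(P_n) \to 0$ in $C^*(S^*\gr,\pi^*\alpha^0)$; using the symbolic sequence \eqref{ASpdoext2} (with a quantization $\sigma_0(P_n)\mapsto \widetilde{P}_n\in \Psi_c^0$) together with the norm estimate above, one modifies $P_n$ by a correction in $C^*(\gr,\alpha_\Omega)$ so that the resulting sequence lies in $\Psi_c^{-1}(\gr,\alpha_\Omega)\subset C^*(\gr,\alpha_\Omega)$ and still converges to $P$, forcing $P \in C^*(\gr,\alpha_\Omega)$. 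Surjectivity follows because the image of $\sigma$ contains the dense subalgebra $C_c^\infty(S^*\gr,\pi^*(\alpha^0)_\Omega)$ by Proposition \ref{ASpdoext}, and any $*$-homomorphism between $C^*$-algebras has closed image. The only delicate step in this whole argument is the parametrix norm estimate above, which requires careful use of the twisted symbolic calculus developed in Section \ref{twistsymbsec} to ensure that the correcting operator $Q$ itself defines a bounded element of $M(C^*(\gr,\alpha_\Omega))$.
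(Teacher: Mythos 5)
Your overall architecture (ideal inclusion, continuous extension of $\sigma_0$, kernel identification, surjectivity by density plus closed range) is exactly the classical route that the paper itself merely invokes — the paper gives no detailed argument, deducing the extension ``as in the classic case'' from Theorem \ref{pdomultipliers} and the sequence (\ref{ASpdoext2}). But the technical heart of your write-up is wrong. First, the operator identity you propose is impossible: if $(\|\sigma_0(P)\|^2+\varepsilon)I-P^*P+Q^*Q\in\Psi_c^{-\infty}$, then passing to principal symbols forces $|\sigma_0(Q)|^2=|\sigma_0(P)|^2-\|\sigma_0(P)\|^2-\varepsilon\leq -\varepsilon<0$, so no such $Q$ exists; the H\"ormander/parametrix trick requires the sign $c^2I-P^*P-Q^*Q\in\Psi_c^{-1}$. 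Second, and more importantly, even with the sign corrected the parametrix argument proves the \emph{opposite} inequality to the one you call crucial: in the quotient $C^*$-algebra $\overline{\Psi^0}(\gr,\alpha_\Omega)/C^*(\gr,\alpha_\Omega)$ one gets $c^2-\pi(P)^*\pi(P)=\pi(Q)^*\pi(Q)\geq 0$, hence $\inf_{K\in C^*(\gr,\alpha_\Omega)}\|P+K\|\leq\|\sigma_0(P)\|$. That inequality is what your kernel-identification step actually uses (it gives $\mathrm{dist}(P_n,C^*(\gr,\alpha_\Omega))\to 0$ when $\sigma_0(P_n)\to 0$), so your third paragraph silently relies on it while citing the reversed estimate.

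The estimate you did state, $\|\sigma_0(P)\|\leq\inf_K\|P+K\|$, is genuinely needed — it is what makes $\sigma$ well defined on the completion and, equivalently, what shows that the surjection from $C^*(S^*\gr,\pi^*\alpha^0_\Omega)$ onto the quotient $\overline{\Psi^0}/C^*(\gr,\alpha_\Omega)$ is injective, so that the quotient is the full twisted symbol $C^*$-algebra and not a proper quotient of it — but it cannot be obtained from a parametrix. It requires a separate, representation-theoretic argument: one must produce enough states or representations of $\overline{\Psi^0}(\gr,\alpha_\Omega)$ that recover the value of the principal symbol at each point of $S^*\gr$ (typically by testing against highly oscillatory sections supported in a chart $\Omega_i$ where the Fell line bundle is trivialized, with a Riemann--Lebesgue-type argument showing that perturbations by elements of $C^*(\gr,\alpha_\Omega)$ do not contribute in the limit), and in the twisted setting one must check compatibility of this localization with the cocycle data. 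None of this appears in your proposal, so the two inequalities that together identify $\overline{\Psi^0}(\gr,\alpha_\Omega)/C^*(\gr,\alpha_\Omega)$ with $C^*(S^*\gr,\pi^*\alpha^0_\Omega)$ are respectively mis-attributed and missing; as written the argument establishes at best a short exact sequence whose quotient is some completion of the symbol algebra, not the asserted one.
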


We will take as usual the connecting morphism in $K$-theory of the above short exact sequence of $C^*$-algebras and we will prove that it only depends on the class of $\alpha\in H^1(\gr,PU(H))$ giving thus a sense to the analytic index morphism for the twisted groupoid $(\gr,\alpha)$. We state the precise result:

\begin{proposition}\label{AIndexinvariance}
Let $\alpha_{\Omega_1},\alpha_{\Omega_2}$ be two $PU(H)$-valued $\gr-$cocycles. Suppose they define the same class in $H^1(\gr,PU(H))$ then we have a commutative diagram 
\begin{equation}\label{IndexOmega}
\xymatrix{
K^1(S^*\gr,(\pi^*\alpha^0)_{\Omega_1})\ar[dd]_-{\eta_*}^-\cong\ar[rr]^-{Ind^{a}_{(\gr,\alpha_{\Omega_1})}}&&K^0(\gr,\alpha_{\Omega_1})\ar[dd]^-{\eta_*}_-\cong\\ &&\\
K^1(S^*\gr,(\pi^*\alpha^0)_{\Omega_2})\ar[rr]_-{Ind^{a}_{(\gr,\alpha_{\Omega_2})}}&&K^0(\gr,\alpha_{\Omega_2})
}
\end{equation}
where the isomorphisms $\eta_*$ above are associated in a natural way by an explicit isomorphism $\eta$ between the cocycles, and $Ind^{a}_{(\gr,\alpha_{\Omega_i})}$ stand for the associated connecting morphism of the short exact sequence \ref{pdoses}.
\end{proposition}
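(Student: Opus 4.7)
The strategy is to reduce to the case of two cocycles on a common refinement of $\Omega_1$ and $\Omega_2$, and then construct an explicit isomorphism of the whole pseudodifferential short exact sequence (\ref{pdoses}) induced by the chosen Cech 0-cochain realizing the cohomology. Naturality of the boundary map in K-theory then forces the square (\ref{IndexOmega}) to commute.

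\textbf{Step 1: Reduction to a common cover.} Passing to a common refinement $\Omega \to \Omega_1,\Omega_2$ gives strict groupoid morphisms $\gr_\Omega \to \gr_{\Omega_i}$, which are Morita equivalences of \'etale covering groupoids. Pulling back cocycles along these refinement maps gives pullback Fell bundles, and since every construction in sections \ref{ASsection} and \ref{pdogrpds} (distributions, product (\ref{twistdistproduct}), involution (\ref{twistdistinvolution}), principal symbol $\sigma$ of proposition \ref{ASpdoext}, extension (\ref{pdoses})) is functorial with respect to such morphisms, one gets canonical isomorphisms between all the algebras built from $\alpha_{\Omega_i}$ and those built from the pullback cocycles on $\Omega$. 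Thus it suffices to prove the statement for two cocycles $\alpha,\alpha'$ defined on the same cover $\Omega$.

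\textbf{Step 2: From a cohomology between $\alpha$ and $\alpha'$ to an isomorphism of Fell bundles.} An equivalence of generalized morphisms $\alpha \simeq \alpha'$ on $\gr_\Omega$ (after further refining if necessary, which is subsumed in Step 1) is given by a $0$-cochain $\eta=\{\eta_i:\Omega_i\to PU(H)\}$ such that $\alpha'_{ij}(\gamma)=\eta_{t(\gamma)}\,\alpha_{ij}(\gamma)\,\eta_{s(\gamma)}^{-1}$ for every $\gamma\in\gr_i^j$. Pulling back the universal central extension $S^1\to U(H)\to PU(H)$ along $\eta$ produces a canonical isomorphism of $S^1$-central extensions $\widetilde{\eta}:R_{\alpha}\stackrel{\cong}{\to}R_{\alpha'}$ over $\gr_\Omega$, and hence a $S^1$-equivariant isomorphism of the associated line bundles $L\stackrel{\cong}{\to}L'$ over $\gr_\Omega$ that intertwines the Fell multiplications (\ref{Fellproperty}). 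This is the step that requires the most care: one must check that the Fell product is preserved, which boils down to the cocycle identity for $\eta$ and the central nature of $S^1$ inside $U(H)$.

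\textbf{Step 3: Naturality of the whole pseudodifferential extension.} Because the spaces $P_c^m(\gr_i^j,\Omega_{ij};\Omega^{\frac{1}{2}}\otimes L_{ij})$ and the operations of pullback, pushforward, and external product of proposition \ref{ASproduct} depend on the line bundle $L_{ij}$ only through its total space, the Fell bundle isomorphism of Step 2 produces an order-preserving $*$-isomorphism
\[
\eta_\#:P_c^\infty(\gr,\alpha)\stackrel{\cong}{\longrightarrow}P_c^\infty(\gr,\alpha')
\]
that coincides on $P_c^{-\infty}(\gr,\alpha)\cong C_c^\infty(\gr,\alpha)$ with the standard isomorphism of twisted convolution algebras induced by $\widetilde{\eta}$. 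Applying theorem \ref{pdomultipliers} and the same naturality to the symbol map of proposition \ref{ASpdoext}, one obtains a morphism of short exact sequences of $C^*$-algebras
\[
\xymatrix@C=0.6cm{
0\ar[r]&C^*(\gr,\alpha)\ar[r]\ar[d]_-{\eta_*}^-\cong&\overline{\Psi^0}(\gr,\alpha)\ar[r]^-{\sigma}\ar[d]_-{\overline{\eta_\#}}^-\cong&C^*(S^*\gr,\pi^*\alpha^0)\ar[r]\ar[d]_-{(\pi^*\eta)_*}^-\cong&0\\
0\ar[r]&C^*(\gr,\alpha')\ar[r]&\overline{\Psi^0}(\gr,\alpha')\ar[r]_-{\sigma}&C^*(S^*\gr,\pi^*\alpha'^0)\ar[r]&0
}
\]
where all vertical arrows are $*$-isomorphisms. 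The rightmost vertical is the isomorphism induced by the pullback $0$-cochain $\pi^*\eta$, which evidently agrees with the one in the statement.

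\textbf{Step 4: Conclusion via naturality of the connecting map.} Applying $K$-theory to the diagram above and using naturality of the boundary homomorphism $\partial$ in the six-term exact sequence yields the commutative diagram (\ref{IndexOmega}), since $Ind^{a}_{(\gr,\alpha_\Omega)}$ was defined precisely as $\partial$ for the pseudodifferential extension (\ref{pdoses}).

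The conceptually serious work is concentrated in Steps 1 and 2: verifying that the Cech-theoretic data $(\Omega,\eta)$ really produces a Fell-bundle isomorphism, and that this isomorphism is compatible with the atlas-of-bisubmersions definition of $P^m_c$. Step 3 is then a matter of tracking the naturality of the constructions of section \ref{pdogrpds} (which were set up entirely in terms of the Fell bundle $L$), and Step 4 is formal.
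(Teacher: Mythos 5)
Your Steps 2--4 are sound and run parallel to what the paper does in spirit (compare the two pseudodifferential extensions by a morphism and invoke naturality of the connecting map), but Step 1 contains a genuine gap, and it sits exactly where the real work of the paper's proof is. Passing to a refinement $\Omega$ of $\Omega_1,\Omega_2$ does \emph{not} give ``canonical isomorphisms between all the algebras built from $\alpha_{\Omega_i}$ and those built from the pullback cocycles on $\Omega$'': the twisted convolution algebra is built on the Cech groupoid $\gr_{\Omega}$, and changing the cover changes this groupoid and its algebra up to Morita equivalence only, never up to isomorphism (already in the untwisted case, the algebra of the Cech groupoid of a nontrivial cover of $M$ is a matrix-type algebra Morita equivalent, but not isomorphic, to $C^\infty_c(M)$). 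The same applies to $P_c^\infty$, $\overline{\Psi^0}$ and the symbol algebras. So after Step 1 you cannot speak of an isomorphism of the short exact sequence (\ref{pdoses}); you only have a Morita equivalence, and the nontrivial point is precisely that this equivalence is compatible with the pseudodifferential extension and its boundary map. (A smaller instance of the same issue reappears inside your Step 2: to turn the $PU(H)$-valued $0$-cochain $\eta$ into an isomorphism $R_\alpha\cong R_{\alpha'}$ you must lift the $\eta_i$ to $U(H)$, which in general forces yet another refinement of the cover.)

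The paper's proof is designed around this obstacle: it encodes the equivalence of cocycles as a common cocycle extension $\alpha$ on a suitable cover $\Omega$ with $\alpha|_{\Omega_i}=\alpha_i$, observes that for a sufficiently small refinement the Morita equivalence is implemented by an honest $C^*$-morphism $C^*(\gr,\alpha_{\Omega_i})\to C^*(\gr,\alpha_\Omega)$ (extension by zero, a corner-type embedding, and similarly for $S^*\gr$), checks that this morphism extends to $\overline{\Psi^0}(\gr,\alpha_{\Omega_i})\to\overline{\Psi^0}(\gr,\alpha_\Omega)$, and then gets a morphism of six-term exact sequences in which the maps on the ideal and the quotient are $K$-theory isomorphisms; a five lemma/naturality argument finishes the proof, with $\eta_*$ being these Morita-induced isomorphisms. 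This route never needs a Fell-bundle isomorphism at all. To repair your argument you would have to replace the claimed isomorphisms of Step 1 by exactly this corner-embedding compatibility statement (both at the $C^*$ level and at the $\overline{\Psi^0}$ level), at which point your Steps 2--4 reduce to the special case of cohomologous cocycles on a fixed cover, which is fine but no longer carries the main burden of the proposition.
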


\begin{proof}
Giving an explicit isomorphism $\eta$ between the cocycles is equivalent to give a common refinement $\Omega$ of $\Omega_1$ and $\Omega_2$ together with a common cocycle extension, {\it i.e.}, a cocycle $\gr_\Omega\stackrel{\alpha}{\longrightarrow} PU(H)$ with $\alpha|_{\Omega_i}=\alpha_i$, $i=1,2$. By Lemma 3.4 in \cite{CaWangAENS} we know that $\eta$ induces isomorphisms in $K-$theory as required and given by explicit Morita equivalences. We have now to explain why we can choose these isomorphisms $\eta$ to obtain the commutativity of diagram (\ref{IndexOmega}) above. In fact we can choose a sufficiently small refinement so that the above mentioned Morita equivalence is given by a canonical $C^*$-morphism
$$C^*(\gr,\alpha_{\Omega_i})\to C^*(\gr,\alpha_\Omega)$$ where of course the Morita inverse is an honest correspondence (same applies for the groupoid $S^*\gr$). In this situation we also have a $C^*$-morphism (extending by zero)
\begin{equation}\label{PsiOmega}
\overline{\Psi^0}(\gr,\alpha_{\Omega_i})\to \overline{\Psi^0}(\gr,\alpha_\Omega)
\end{equation}
and hence we obtain a morphism between the respective six term exact sequences in which two out of three morphisms, those corresponding to $C^*(S^*\gr,\Omega_i)$ and $C^*(\gr, \Omega_i)$, are isomorphisms. One concludes with a five lemma argument.
\end{proof}
\begin{remark}
In particular the argument of the proof above gives that the K-theory groups \\ $K_*(\overline{\Psi^0}(\gr,\alpha_{\Omega_1}))$ and $K_*(\overline{\Psi^0}(\gr,\alpha_{\Omega_2}))$ are isomorphic. We are not exploring for the moment the Morita invariance or not of these algebras. Notice that the situation above is very particular, we are only moving the cocycle and not the groupoid, indeed if we allow a more general Morita equivalence the analytic index, twisted or not, is not necessarily invariant since the algebroid of a groupoid is not a Morita invariant\footnote{For example a point and a pair groupoid are Morita equivalent groupoids with very different algebroids and very different index morphisms.}.
\end{remark}


\begin{definition}\label{defanaindex}
Let $\gr\rightrightarrows M$ be a Lie groupoid and $\alpha$ a twisting, the analytic index morphism of $(\gr,\alpha)$ is the connecting morphism of the short exact sequence (\ref{pdoses}) above
\begin{equation}\label{aindex}
\xymatrix{
K^1(S^*\gr,\pi^*\alpha^0)\ar[rr]^-{Ind^{a}_{(\gr,\alpha)}}&&K^0(\gr,\alpha)
}
\end{equation} 
where $K^0(\gr,\alpha):=K_0(C^*(\gr,\alpha))$.
\end{definition}

Thanks to above proposition the index morphism above only depends on the twisting $\alpha$ on $\gr$. In practice of course one often choose a representing cocycle. From now on we will then drop from our notations the reference to the open covers and hence to the Cech groupoids.

Now, we will see that the index morphism above can be factorize, as in the classic case, via the twisted $K^0$-group of the cotangent Lie algebroid, giving thus a more primitive and tractable index.

By considering the induced twisting on the Lie algebroid and the associated twisted groupoid $(A\gr,\pi^*\alpha^0) $ we have the pseudodifferential extension
\begin{equation}\label{pdoses}
\xymatrix{
0\ar[r]&C^*(A\gr,\pi^*\alpha^0)\ar[r]&\overline{\Psi^0}(A\gr,\pi^*\alpha^0)\ar[r]^-{\sigma}&C^*(S^*\gr,\pi^*\alpha^0)\ar[r]&0
}
\end{equation}
and the corresponding index morphism
\begin{equation}\label{aindexalgebroid}
\xymatrix{
K^1(S^*\gr,\pi^*\alpha^0)\ar[rr]^-{{\delta}}&&K_{top}^0(A^*\gr,\pi^*\alpha^0)
}
\end{equation} 
where $K_{top}^0(A^*\gr,\pi^*\alpha^0)$ is the topological Twisted K-theory of $(A^*\gr,\pi^*\alpha^0)$. The above follows since $S^*(A^*\gr)=S^*\gr$ and since $K^0(A\gr,\pi^*\alpha^0)\cong K_{top}^0(A^*\gr,\pi^*\alpha^0)$ via a Fourier isomorphism of the corresponding $C^*-$algebras (proposition 2.11 in \cite{CaWangAdv}).


Now, remember that in  \cite{CaWangAdv} we used the Connes tangent groupoid approach to construct an index morphism
\begin{equation}\label{defind}
\xymatrix{
K^0(A^*\gr,\pi^*\alpha^0)\ar[rr]^-{Ind_{(\gr,\alpha)}}&&K^0(\gr,\alpha)
}
\end{equation} 
which generalizes the analytic index morphism for a (untwisted) Lie groupoid. We briefly recall its construction: The main, very simple observation, is that the functoriality of the deformation to the normal cone construction implies that the twisting $\alpha$ on $\gr$ extends to a twisting $\alpha^T$ on $\gr^T$ such that $\alpha^T|_{t\neq 0}$ identifies with $\alpha$ while $\alpha^T|_{t=0}$ identifies with the twisting $\pi^*\alpha_0$ on $A\gr$ coming from the twisting $\alpha_0$ on $M$. There is then a short exact sequence of $C^*$-algebras
\begin{equation}\label{pdoses}
\xymatrix{
0\ar[r]&C^*(\gr\times (0,1],\alpha^T|_{(0,1]})\ar[r]&C^*(\gr^T,\alpha^T)\ar[r]^-{ev_0}&C^*(A\gr,\pi^*\alpha^0)\ar[r]&0
}
\end{equation}
with contractible kernel. The analytic deformation index is defined as 
\begin{equation}
\xymatrix{
K^0(A^*\gr,\pi^*\alpha^0)\ar[rrr]^-{Ind_{(\gr,\alpha)}:=e_1\circ e_0^{-1}}&&&K^0(\gr,\alpha)
}
\end{equation}
where $e_t$ stands for the morphism in $K$-theory induced from the evaluation at $t$, see also \cite{Ca13} more complementary details.

We have the following theorem which stays that the analytic index morphism using pseudodifferential calculus factors through the analytic index (or deformation index) using Connes tangent groupoid. 

\begin{theorem}\label{DefindexvsAnaindex}
Given $(\gr,\alpha)$ as above, we have the following commutative diagram
\begin{equation}
\xymatrix{
K^1(S^*\gr,\pi^*\alpha^0)\ar[d]_-{\delta}\ar[rr]^-{Ind^{a}_{(\gr,\alpha)}}&&K^0(\gr,\alpha)\\
K^0(A^*\gr,\pi^*\alpha^0)\ar[rru]_-{Ind_{(\gr,\alpha)}}&&
}
\end{equation}
\end{theorem}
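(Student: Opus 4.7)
The plan is to build an intermediate pseudodifferential extension at the level of the Connes tangent groupoid $\gr^T$ equipped with the extended twisting $\alpha^T$ and to use naturality of the connecting homomorphism with respect to evaluation at the endpoints $t=0$ and $t=1$. Concretely, I would first apply the construction of Section \ref{pdogrpds} to the twisted Lie groupoid $(\gr^T,\alpha^T)$ (whose base is $M\times[0,1]$ and whose Lie algebroid is $A\gr\times[0,1]$, with $S^*(A\gr^T)\cong S^*\gr\times[0,1]$) to obtain the short exact sequence
\begin{equation}\label{pdoTses}
\xymatrix{
0\ar[r]&C^*(\gr^T,\alpha^T)\ar[r]&\overline{\Psi^0}(\gr^T,\alpha^T)\ar[r]^-{\sigma^T}&C^*(S^*\gr\times[0,1],\pi^*\alpha^0)\ar[r]&0,
}
\end{equation}
together with its associated connecting morphism $\partial^T:K^1(S^*\gr\times[0,1],\pi^*\alpha^0)\to K^0(\gr^T,\alpha^T)$.

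Next I would use the smooth evaluations $ev_0$ and $ev_1$ on $\gr^T$. By functoriality of the pdo construction, the twisted multipliers $\overline{\Psi^0}(\gr^T,\alpha^T)$ admit $C^*$-morphisms to $\overline{\Psi^0}(A\gr,\pi^*\alpha^0)$ and to $\overline{\Psi^0}(\gr,\alpha)$ which are compatible with the symbol map and with the $C^*$-morphisms $e_0$ and $e_1$ on the ideals $C^*(\gr^T,\alpha^T)$. Thus one obtains two commutative morphisms of short exact sequences, from (\ref{pdoTses}) to the pdo extension over $A\gr$ at $t=0$, and to the pdo extension over $\gr$ at $t=1$. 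Taking the induced morphisms of six-term exact sequences, and using that $[0,1]$ is contractible so $K^1(S^*\gr\times[0,1],\pi^*\alpha^0)\cong K^1(S^*\gr,\pi^*\alpha^0)$ canonically via either endpoint evaluation, the naturality of the boundary map yields
\begin{equation}\nonumber
e_1\circ \partial^T=Ind^{a}_{(\gr,\alpha)}\quad\text{and}\quad e_0\circ \partial^T=\delta.
\end{equation}

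To conclude, one recalls that the kernel of $e_0:C^*(\gr^T,\alpha^T)\to C^*(A\gr,\pi^*\alpha^0)$ is $C^*(\gr\times(0,1],\alpha^T|_{(0,1]})$, which is contractible, so $e_0$ induces an isomorphism in K-theory and $Ind_{(\gr,\alpha)}=e_1\circ e_0^{-1}$ by definition. Combining this with the two identities above gives
\begin{equation}\nonumber
Ind^{a}_{(\gr,\alpha)}=e_1\circ \partial^T=(e_1\circ e_0^{-1})\circ(e_0\circ \partial^T)=Ind_{(\gr,\alpha)}\circ \delta,
\end{equation}
which is precisely the commutativity of the triangle in the theorem.

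The main obstacle is the first step: one needs to check that all the machinery of Section \ref{ASsection} and Section \ref{pdogrpds} (Androulidakis–Skandalis conormal distributions, pullback, pushforward, external product, the Fell bundle formalism, the symbol exact sequence and Theorem \ref{pdomultipliers}) applies uniformly to the family groupoid $\gr^T$ with twisting $\alpha^T$, in such a way that the evaluation morphisms $ev_0,ev_1$ intertwine both the symbol maps and the multiplier extensions to the twisted $C^*$-algebras. This amounts essentially to verifying that the parametric pdo calculus is smooth up to $t=0$ and that the pseudodifferential singularities of the kernels restrict correctly to the algebroid fibre; once this is established the rest of the argument is a purely diagrammatic consequence of the six-term exact sequence and the contractibility of $\ker e_0$.
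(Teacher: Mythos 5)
Your proposal is correct and follows essentially the same route as the paper: both build the zero-order pseudodifferential extension over $(\gr^T,\alpha^T)$, map it to the extensions over $(\gr,\alpha)$ and $(A\gr,\pi^*\alpha^0)$ via the evaluations $e_1$ and $e_0$, and conclude by naturality of the connecting morphism together with the K-theory isomorphisms induced by $e_0$ (contractibility of $\ker e_0$ and $S^*\gr^T\cong S^*\gr\times[0,1]$ with product twisting). The only cosmetic difference is that the paper also records, via the five lemma, that $e_0$ on $\overline{\Psi^0}(\gr^T,\alpha^T)$ is a K-theory isomorphism, which your argument does not need.
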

\begin{proof}
We have the following commutative diagram
\[
\xymatrix{
0\ar[r]&C^*(\gr,\alpha)\ar[r]&\overline{\Psi^0}(\gr,\alpha)\ar[r]^-{\sigma}&C^*(S^*\gr,\pi^*\alpha^0)\ar[r]&0\\
0\ar[r]&C^*(\gr^T,\alpha^T)\ar[r]\ar[u]^-{e_1}\ar[d]_-{e_0}&\overline{\Psi^0}(\gr^T,\alpha^T)\ar[r]^-{\sigma}\ar[u]^-{e_1}\ar[d]_-{e_0}&C^*(S^*\gr^T,\alpha^T)\ar[r]\ar[u]^-{e_1}\ar[d]_-{e_0}&0
\\
0\ar[r]&C^*(A\gr,\pi^*\alpha^0)\ar[r]&\overline{\Psi^0}(A\gr,\pi^*\alpha^0)\ar[r]^-{\sigma}&C^*(S^*\gr,\pi^*\alpha^0)\ar[r]&0
}
\]
In $K-$theory the evaluations at zero induce isomorphisms, indeed 
$C^*(\gr^T,\alpha)\stackrel{e_0}{\to}C^*(A\gr,\pi^*\alpha^0)$ induces an isomorphism as recalled in the construction of the deformation index, 
$C^*(S^*\gr^T,\alpha^T)\stackrel{e_0}{\to}C^*(S^*\gr,\pi^*\alpha^0)$ induces an isomorphism since $S^*\gr^T=S^*\gr\times [0,1]$ and since, over $S^*\gr^T$, $\alpha^T=\pi^*\alpha^0\times id_{[0,1]}$ (with obvious meaning), and finally $\overline{\Psi^0}(\gr^T,\alpha^T)\stackrel{e_0}{\to}\overline{\Psi^0}(A\gr,\pi^*\alpha^0)$ induces an isomorphism by the five lemma. The conclusion follows immediately.
\end{proof}

\subsection{The Twisted longitudinal Connes-Skandalis index theorem.}

Let $(M,F)$ be a foliated compact manifold. Let $\alpha$ be a twisting on the holonomy groupoid $\gr$. In \cite{CaWangAdv} the Connes-Skandalis twisted topological index morphism was constructed
$$Ind^{top}_{(M,F),\alpha}:K^0(F^*,\pi^*\alpha^0)\to K^0(\gr,\alpha),$$
as an immediate generalization of Connes-Skandalis topological index morphism using as them an embedding of $M$ into an euclidean space but adapting the Thom isomorphism to the twisted case.  Our main theorem in
\cite{CaWangAdv} (theorem 3.3) is the equality between the analytic index morphism constructed through the tangent groupoid and the topological index morphism. The next result is a consequence of theorem 3.3 in ref. cit. and theorem \ref{DefindexvsAnaindex} above: 

\begin{corollary}[Twisted Connes-Skandalis for projective families of longitudinal operators]\label{twistCSthm}
Let $(M,F)$ be a foliated compact manifold. Let $\alpha$ be a twisting on the holonomy groupoid $\gr$ (without any restriction on the twisting). Let 
$D\in \Psi^*(\gr,\alpha_\Omega)$ a projective elliptic longitudinal pseudodifferential operator, then

Then we have the following equality of $K-$theory morphisms
\begin{equation}
Ind^a_{(M,F),\alpha}(\sigma(D))=Ind^{top}_{(M,F),\alpha}(\delta(\sigma(D))),
\end{equation}
where $\sigma(D)\in K^1(S^*F,\alpha)$ is the class of the principal symbol class and $\delta(\sigma(D))$ its image on $K^0(F^*,\alpha)$.
\end{corollary}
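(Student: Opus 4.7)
The proof is essentially a one-line combination of two results already in hand, so the plan is to make precise how the pieces fit together and to verify that the objects on the two sides of the asserted equality are indeed the images of the same class under the two composite index maps.

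First, I would recall that for the holonomy groupoid $\gr$ of $(M,F)$, the algebroid is exactly $F\to M$, so $A^*\gr = F^*$ and $S^*\gr = S^*F$, and the analytic index morphism of Definition \ref{defanaindex} specializes to $Ind^a_{(M,F),\alpha}:K^1(S^*F,\pi^*\alpha^0)\to K^0(\gr,\alpha)$. Since $D\in \Psi^*(\gr,\alpha_\Omega)$ is elliptic, its principal symbol $\sigma(D)$ is invertible and, by the pseudodifferential short exact sequence (\ref{pdoses}) together with Theorem \ref{pdomultipliers}, defines a class in $K^1(S^*F,\pi^*\alpha^0)$; the connecting morphism of this sequence sends $\sigma(D)$ to $Ind^a_{(M,F),\alpha}(\sigma(D))$ by definition.

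Next, I would apply Theorem \ref{DefindexvsAnaindex} to the twisted groupoid $(\gr,\alpha)$. This yields the factorization
\begin{equation}
Ind^a_{(M,F),\alpha}(\sigma(D)) \;=\; Ind_{(\gr,\alpha)}\bigl(\delta(\sigma(D))\bigr),
\end{equation}
where $\delta:K^1(S^*F,\pi^*\alpha^0)\to K^0(F^*,\pi^*\alpha^0)$ is the connecting morphism (\ref{aindexalgebroid}) associated to the pseudodifferential extension of the commutative (abelian) algebroid groupoid $A\gr = F$, and $Ind_{(\gr,\alpha)}$ is the Connes tangent groupoid deformation index constructed in \cite{CaWangAdv}.

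Finally, I invoke Theorem 3.3 of \cite{CaWangAdv}, which asserts the equality of the deformation analytic index and the twisted Connes-Skandalis topological index for foliation holonomy groupoids: $Ind_{(\gr,\alpha)} = Ind^{top}_{(M,F),\alpha}$ as morphisms $K^0(F^*,\pi^*\alpha^0)\to K^0(\gr,\alpha)$. Substituting this identification into the previous displayed equation yields
\begin{equation}
Ind^a_{(M,F),\alpha}(\sigma(D)) \;=\; Ind^{top}_{(M,F),\alpha}\bigl(\delta(\sigma(D))\bigr),
\end{equation}
which is the claimed equality. There is no real obstacle in the argument itself; the only point worth emphasizing (and the one I would verify carefully) is that the class $\sigma(D)\in K^1(S^*F,\alpha)$ produced by ellipticity in the pseudodifferential sequence (\ref{ASpdoext2}) agrees with the class on which $\delta$ in (\ref{aindexalgebroid}) is evaluated in Theorem \ref{DefindexvsAnaindex}, and that both $Ind^a$ and $Ind$ are well defined intrinsically in terms of the twisting class $\alpha\in H^1(\gr;PU(H))$ by Proposition \ref{AIndexinvariance}, so that the diagram chase is cocycle-independent.
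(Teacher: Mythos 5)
Your proposal is correct and follows essentially the same route as the paper: the corollary is obtained exactly by combining the factorization $Ind^a_{(\gr,\alpha)}=Ind_{(\gr,\alpha)}\circ\delta$ of Theorem \ref{DefindexvsAnaindex} with the equality of the deformation (tangent groupoid) index and the twisted topological index from Theorem 3.3 of \cite{CaWangAdv}, specialized to the holonomy groupoid where $A^*\gr=F^*$ and $S^*\gr=S^*F$. Your additional remarks on cocycle-independence via Proposition \ref{AIndexinvariance} are consistent with the paper's setup and do not change the argument.
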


\section{Examples of projective pseudodifferential operators}

\subsection{Projective families of Dirac operators}
The following example largely generalizes the projective families of Dirac operators introduced in \cite{MMSI} and on \cite{BG}. We will consider below two explicit subexamples covered by this situation, one of which includes the case of families treated in ref. cit.

Let $\gr$ be a Lie groupoid and $\alpha$ a torsion twisting. Let $E$ be a Hermitian $\mathbb{Z}_2$-graded $\alpha$-vector bundle. As shown in \cite{MMSI} or 
\cite{BG} p.10, the collection $End(E_i)$, the $E_i$ as in definition \ref{twistedvbundles},  defines a bundle of algebras over $M$ which we can denote of course $End(E)$.

We say that $E$ is a twisted Clifford bundle if there is an homomorphism 
$$c:Cliff(A\gr)\to End(E)$$
of unital $\mathbb{Z}_2$-graded *-algebras.

Suppose we have a Clifford connection $\nabla^E$ on $E$, that is, a collection of hermitian $A\gr$-connections $\nabla^i$ such that 
$$\nabla^i_X(cl(v)\cdot e_a)=cl(\nabla_Xv)e_a+cl(v)(\nabla^i_Xe_a).$$

We can construct as classically an operator
$$D_j:=cl\circ \nabla^i:\Gamma(E_j)\longrightarrow\Gamma(E_j)$$
for every $j$ by using the Clifford module structure. 

Now, over $\gr_j^k$ we have an isomorphism $L_{jk}\otimes t^*E_k\cong s^*E_j$ given by the $R_\alpha$-action on $E$. Once restricted to $(\gr_j^k)^x$ we can consider the operators
$$\mathscr{D}_{j,x}^k:\Gamma(L_{jk}\otimes t^*E_k)\longrightarrow\Gamma(L_{jk}\otimes t^*E_k)$$
given by pullback by $s:\gr_j^k\to \Omega_j$ all the structures above used to construct $D_j$ and using the isomorphisms $L_{jk}\otimes t^*E_k\cong s^*E_j$ and $s^*A^*\gr\cong T^*_t\gr$ (where $T_t\gr$ stands for the vertical tangent bundle with respect to the submersion 
$t:\gr\to M$). In order for the family $\mathscr{D}_{j,x}^k$ to define an order one twisted differential operator we require a little bit more on the connections. Indeed the connections $\nabla^i$ should be compatible in some way with the twisting. In fact we require that under the isomorphism $L_{ik}\otimes t^*E_k\cong s^*E_i$ one has
$$s^*\nabla^i=id\otimes t^*\nabla^i +\nabla^{ik}\otimes id$$
where $\nabla^{ik}$ are connections on $L_{ik}$ satisfying the "Fell condition"
$$m^*\nabla^{ik}=\nabla^{ij}\otimes id +id \otimes \nabla^{jk}$$
under the isomorphism of bundles $L_{ij}\otimes L_{jk}\cong m^*L_{ik}$ over $\gr_i^j\,_{t_j}\times_{s_j}\gr_j^k$ (whenever not empty) and with $m:\gr_i^j\,_{t_j}\times_{s_j}\gr_j^k\to \gr_i^k$ the groupoid multiplication. Connections as above exist, the proof follows the same lines as lemma 2.2 and lemma 2.11 in \cite{BG}.

We have finally: Given a twisted Clifford bundle as above we can form a familiy of Dirac operators $\mathscr{D}_{j,x}^k$ that gives an operator
$$\mathscr{D}_E\in \Psi^1((\gr,\alpha);E)$$
that can be called "The twisted Dirac operator for the twisted Clifford bundle $E$". As in the untwisted case, the index theoretical interesting part of the operator above is its positive part
$\mathscr{D}_E^+\in \Psi^1((\gr,\alpha);E^+,E^-)$ whose analytic index lives in 
$$Ind^a_{\gr,\alpha}(\mathscr{D}_E^+)\in K_0(C^*_r(\gr,\alpha_\Omega)).$$

\subsubsection{An example for Riemannian foliations}. Let $(M,F)$ be a Riemannian foliation with holonomy groupoid $\gr$, suppose that the normal bundle $N$ has a $\gr$-invariant metric and consider the orientation twisting it defines
$$\gr\stackrel{\alpha_N}{--->}PU(H).$$
Suppose $N$ has even rank. An example of a $\mathbb{Z}_2$-graded $\alpha_N$-vector bundle can be constructed from the local spinors of $N$, let us denoted them by $S^+$ and $S^-$. The action of $\gr$ on $N$ gives the Clifford action
$$Cl(F)\to End(S^+,S^-)$$
The Dirac operator construction above yields an operator
$$D_S\in \Psi^1((\gr,\alpha_N);S)$$
and an analytic index
$$Ind^a_{\gr,\alpha}(\mathscr{D}_S^+)\in K_0(C^*_r(\gr,\alpha_\Omega)),$$
that might be computed topologically by means of the twisted longitudinal index theorem above, {i.e.},
$$Ind^a_{\gr,\alpha}(\mathscr{D}_S^+)=Ind^{top}_{\gr,\alpha}([\sigma(\mathscr{D}_S^+)]),$$
where $[\sigma(\mathscr{D}_S^+)]\in K^0(F^*,\alpha)$ is its principal symbol class.

\subsubsection{Projective representations for discrete groups and twisted operators on coverings}

Let $\Gamma$ be a discrete group and let $\alpha:\Gamma \to PU(H)$ be a projective representation. Let $M$ be a closed smooth manifold and $f:M--->\Gamma$ be a generalized morphism. Consider the associated $\Gamma-$covering $\widetilde{M}\to M$  and the associated Connes-Moscovici groupoid (\cite{Concg} $III.4\alpha$)
$$\widetilde{M}\times_\Gamma \widetilde{M}\rightrightarrows M,$$
whose Lie algebroid is $TM$.

There is an explicit Morita equivalence $\tilde{f}:\widetilde{M}\times_\Gamma \widetilde{M}--->\Gamma$ such that the following diagram of generalized morphisms commutes
\begin{equation}
\xymatrix{
M\ar[d]_-{p}\ar[r]^-f&\Gamma\\
\widetilde{M}\times_\Gamma \widetilde{M}\ar[ru]_-{\tilde{f}}&
}
\end{equation}

We will consider the twisting $\tilde{\alpha}:=(\alpha\circ \tilde{f})$ on the Lie groupoid $\widetilde{M}\times_\Gamma \widetilde{M}$. 

Consider an even rank vector bundle over $M$ defined by a cocycle 
$$M\stackrel{O_E}{--->}SO(n)$$ and suppose it passes to the groupoid $\widetilde{M}\times_\Gamma \widetilde{M}\stackrel{O_E}{--->}SO(n)$, or in other words it is a $\Gamma$-invariant vector bundle over $M$. Next, consider the composition
$$\widetilde{M}\times_\Gamma \widetilde{M}\stackrel{O_E}{--->}SO(n)\stackrel{\beta}{\rightarrow}PU(H)$$
where $\beta$ is induced from $Spin^c(n)\to U(H)$ as explained in example \ref{obundle} above.
Finally, assume this bundle is compatible with the twisting, {\it i.e.}, 
$$\beta\circ O_E=\tilde{\alpha}$$
or $E$ is an $\alpha$-twisted vector bundle, this implies the twisting $\alpha$ has to be torsion. By choosing local liftings to $Spin^c(n)$ we have a twisted action
$$Cl(TM)\to End(E)$$
and a twisted Dirac operator
$$D_\alpha\in \Psi^1(\widetilde{M}\times_\Gamma \widetilde{M},\tilde{\alpha}),$$
whose index lives in 
$$Ind_a(D_\alpha^+)\in K^0(\widetilde{M}\times_\Gamma \widetilde{M},\tilde{\alpha})\sim K_0(C^*(\Gamma,\alpha)).$$

We beleive that this example is closely related to the twisted operators considered by Azzali and Wahl in \cite{AW} that were first worked out by Mathai in \cite{Ma1,Ma2} to obtain very interesting geometric corollaries. We will try to study this somewhere else.

\subsection{Projective symbols of Fractional Indices' projective pseudodifferential operators}


In \cite{MMSfrac}, Mathai, Melrose and Singer showed that any oriented manifold admits a projective Dirac operator even if the manifold does not admit a spin structure, in this case they show the $\hat{A}-$genus is still computed by the index of this operator and hence it is a rational number. In fact, in the same paper, they prove a topological index formula for every pseudodifferential projective operator acting between the sections of twisted vector bundles associated to a twisting (equivalently, a finite rank Azumaya bundle in their terms) on the manifold. The indices for these operators are hence rational numbers. In principle their analytic index is a map from the twisted $K$-theory of the cotangent bundle to the real numbers.

Now, to make the link with our paper we have to clarify some terminology. The projective pseudodifferential operators treated in the fractional index paper \cite{MMSfrac} are not a particular case of ours, in fact, the projective families operators introduced in \cite{MMSI} by Mathai, Melrose and Singer are not the fractional families version corresponding to the fractional indices operators. We recall that our operators generalizes the operators introduced in \cite{MMSI} for families. 

Let us introduce some notation to better explain the above paragraph and get to the description of our main example. Let $\gr\rightrightarrows M$ be a Lie groupoid. In this paper we start always with a (groupoid) twisting on $\gr$ that induces a twisting on $M$ and on $A^*\gr$. Now, if one starts with a twisting on the base manifold $M$, it induces by pullback a twisting on $A^*\gr$, but the twisting does not necessarily extend to the entire groupoid $\gr$. For example if $\gr=M\times M\rightrightarrows M$ is the pair groupoid, every twisting on $\gr$ is trivial while the twistings on $M$ are classified by $H^3(M;\mathbb{Z})$. For this precise example, what Mathai, Melrose and Singer exploit in \cite{MMSfrac} is that the twisting on 
$M$ extends in some way to a sufficiently small neighborhood of the diagonal and define projective pseudodifferential operators with a restriction on the support which depends on the neighborhood of the diagonal. 

To get to our example, let $\alpha^0$ be a torsion twisting on a manifold $M$. Denote by $\Psi^\infty_\epsilon(M;E)$ the algebra of "Fractional Indices" projective pseudodifferential operators constructed in \cite{MMSfrac}. By construction, or see \cite{SS} (p. 312 equation (3.1)) for a nice explicit computation, this algebra of projective operators has as associated algebra of symbols what we have denoted as
$$Symb_{cl}^\infty(T^*M;\alpha^0).$$
By our proposition \ref{Fourierpdosymb}, we have a Fourier isomorphism
\begin{equation}
\xymatrix{
\Psi^\infty(TM,\alpha^0)\ar[r]_-{\cong}^-{\mathscr{F}}&Symb_{cl}^\infty(T^*M,\alpha^0),
}
\end{equation}
or in other words the algebra of symbols for the "Fractional Indices" projective pseudodifferential operators can be obtained as an algebra of projective total symbols in the sense of the present paper.

But notice further, that given $\alpha^0$ a twisting (torsion or not) on a manifold $M$ and $A\to M$ a Lie algebroid, our proposition \ref{Fourierpdosymb} still holds. One can then ask for a pseudodifferential quantization for the associated algebra of symbols and the associated Index theory. In the case the algebroid $A=A\gr$ is integrable by a groupoid $\gr$ and the twisting extends to $\gr$ it corresponds to the theory we developed in this article. The other very interesting case is when the twisting does not extend to $\gr$, in this case one expects to obtain the higher fractional index theory. We will discuss and develop this topic elsewhere.

\bibliographystyle{amsplain}
\bibliography{bibliographie}

\end{document}